\newlength{\myhmargin} \setlength{\myhmargin}{1in} \addtolength{\myhmargin}{18pt}
\newtheorem{theorem}{Theorem}[section]
\newtheorem{lemma}[theorem]{Lemma}
\newtheorem{proposition}[theorem]{Proposition}
\newtheorem{corollary}[theorem]{Corollary}
\newtheorem{conjecture}[theorem]{Conjecture}
\theoremstyle{definition}
\newtheorem{definition}[theorem]{Definition}
\newtheorem{remark}[theorem]{Remark}
\newtheorem{example}[theorem]{Example}
\renewcommand{\epsilon}{\varepsilon}
\newcommand{\ob}{\text{Ob}}
\newcommand{\gr}[1]{|{#1}|}
\DeclareMathAlphabet{\mathpzc}{OT1}{pzc}{m}{it}
\newcommand{\dd}{\textbf{d}}
\newcommand{\Z}{\mathbb{Z}}
\newcommand{\R}{\mathbb{R}}
\newcommand{\E}{\mathbb{E}}
\renewcommand{\phi}{\varphi}
\newcommand{\Hom}{\text{Hom}}
\newcommand{\id}{\text{id}}
\newcommand{\sm}{\setminus}
\renewcommand{\bar}{\overline}
\newcommand{\cC}{\mathcal{C}}
\newcommand{\cM}{\mathcal{M}}
\newcommand{\conf}{\mathscr{C}}
\newcommand{\PP}{{}^+_+}
\newcommand{\MP}{{}^+_-}
\DeclareMathOperator{\Sq}{Sq}
\begin{document}
\parindent0em
\thispagestyle{empty}
\title[Framed cobordism and flow category moves]{Framed cobordism and flow category moves}
\author[Andrew Lobb]{Andrew Lobb}
\address{Department of Mathematical Sciences\\ Durham University\\ United Kingdom}
\email{andrew.lobb@durham.ac.uk}

\author[Patrick Orson]{Patrick Orson}
\address{Department of Mathematical Sciences\\ Durham University\\ United Kingdom}
\email{patrick.orson@durham.ac.uk}

\author[Dirk Sch\"utz]{Dirk Sch\"utz}
\address{Department of Mathematical Sciences\\ Durham University\\ United Kingdom}
\email{dirk.schuetz@durham.ac.uk}

\thanks{The authors were partially supported by the EPSRC grant EP/K00591X/1.}

\begin {abstract}
Framed flow categories were introduced by Cohen--Jones--Segal as a way of encoding the flow data associated to a Floer functional.  A framed flow category gives rise to a CW-complex with one cell for each object of the category.  The idea is that the Floer invariant should take the form of the \emph{stable homotopy type} of the resulting complex, recovering the Floer cohomology as its singular cohomology.  Such a framed flow category was produced, for example, by Lipshitz--Sarkar from the input of a knot diagram, resulting in a stable homotopy type generalizing Khovanov cohomology.

In this paper we give moves that change a framed flow category without changing the associated stable homotopy type.  These are inspired by moves that can be performed in the Morse--Smale case without altering the underlying smooth manifold.  We posit that if two framed flow categories represent the same stable homotopy type then a finite sequence of these moves is sufficient to connect the two categories.  This is directed towards the goal of reducing the study of framed flow categories to a combinatorial calculus.

We provide examples of calculations performed with these moves (related to the Khovanov framed flow category), and prove some general results about the simplification of framed flow categories via these moves.
\end {abstract}

\maketitle

\section{Introduction}
Roughly speaking, a \emph{framed flow category} is a way of encoding the flow data associated to a Morse--Smale vector field or, more generally, the flow data of a Floer functional.  Objects of the category correspond to critical points while the morphism set between any two objects has the structure of a framed manifold-with-corners corresponding to the space of flowlines between the objects.

A framed flow category gives rise to a CW-complex with one cell for each object of the category, and the attaching maps determined by the morphism spaces.  When the framed flow category arises from a Floer functional, the (shifted) cohomology of the CW-complex agrees with the corresponding Floer cohomology.  The idea is that the whole CW-complex (up to suspension and desuspension) should be considered as the Floer invariant, instead of just its cohomology.  In other words, the Floer invariant should take the form of a \emph{stable homotopy type}.

%The authors (in some combinations along with Dan Jones) \cite{JLS} \cite{color} have extended this construction in various ways, and see also work by Willis \cite{Willis2}.

In this paper we construct moves that change a framed flow category without changing the associated stable homotopy type.  These are inspired by moves that can be performed in the Morse--Smale situation without altering the underlying smooth manifold.  We posit that if two framed flow categories represent the same stable homotopy type then a finite sequence of these moves is sufficient to connect the two categories.  This is directed towards the goal of reducing the study of framed flow categories (and their associated stable homotopy types) to combinatorics.

We show in Theorem \ref{thm:SNF} that after a finite sequence of our moves, we can arrange for the associated cochain complex to the framed flow category to be in primary Smith normal form.  This immediately gives Corollary \ref{cor:moore}.  This corollary implies, for example, that if two framed flow categories each represent the same Moore space, then the categories are indeed connected by a finite sequence of moves.

In a later paper we shall go further, developing the combinatorics of the moves and proving results based now on the classifications due to Chang and Baues--Hennes of stable homotopy types of small cohomological width.

\subsection{Plan of the paper}
\label{subsec:plan}
In Section \ref{sec:ffcs}, we give a condensed recap of the necessary definitions of a framed flow category.  This is based on the formulation by Lipshitz and Sarkar \cite{MR3230817}, and is intended as a quick-reference guide for the rest of the paper.  In Section \ref{sec:handleslide} we introduce an analogue for framed flow categories of the operation of \emph{handle sliding} in a handlebody decomposition of a smooth manifold.  In Section \ref{sec:whitney} we similarly introduce an analogue of the \emph{Whitney trick}.

Handle sliding, the extended Whitney trick, and handle cancellation (already introduced in \cite{JLSmorse}) are each moves that can performed on framed flow categories while preserving the associated stable homotopy type.  We give an example of their application in Section \ref{sec:tref}.  Recently, Lipshitz and Sarkar \cite{MR3230817} have constructed a framed flow category from the input of a knot diagram.  This gives a stable homotopy type whose cohomology recovers Khovanov cohomology (a knot invariant intimately connected with Floer cohomology).  Our example is drawn from a particular quantum degree ($q = 21$) of the Khovanov stable homotopy type of the disjoint union of three right-handed trefoils.

Finally, in Section \ref{sec:smith} we prove general results about simplification of framed flow categories.  Essentially, these results can be taken as saying that each framed flow category is move-equivalent to a framed flow category realizing the simplest possible cochain complex giving the correct cohomology.

\section{Framed flow categories}
\label{sec:ffcs}

The definition of a flow category was originally given by Cohen, Jones and Segal \cite{MR1338869} but the version we are working with is based on the definition by Lipshitz and Sarkar \cite{MR3230817}; the equivalence of the two approaches is shown in \cite[Prop.\ 3.27]{MR3230817}.

Before we can give the definition of a flow category, we will need a sharpening of the concept of a smooth manifold with corners. This refinement was originally defined by J{\"a}nich \cite{MR0226674} and was further devloped by Laures \cite{MR1781277}. We refer the reader to these sources for a fuller treatment.

\begin{definition}Let $M$ be a smooth $m$--dimensional manifold with corners. For each $p\in M$ there is an integer $l(p)$ such that an open neighbourhood of $p$ is homeomorphic to $(\R_+)^l\times \R^{m-l}$, with $p$ corresponding to $0\in (\R_+)^l\times \R^{m-l}$. We say $p$ is a \emph{corner point of codimension $l$}. The closure of any connected component of the set $\{p\in M\,|\,l(p)=1\}$ is called a \emph{face} of $M$, and the union of the faces is the \emph{boundary} $\partial M $ of $M$.

Now suppose each $p\in M$ belongs to precisely $l(p)$ faces. In this case, a \emph{smooth $m$--dimensional $\langle n\rangle$--manifold} is a manifold with corners $M$, together with an ordered $n$--tuple of faces $(\partial_1M,\dots,\partial_nM)$ which satisfy the following conditions:

\begin{itemize}
\item $\partial_1M\cup\dots\cup\partial_nM=\partial M$,
\item $\partial_iM\cap\partial_jM$ is a face of $\partial_iM$ and $\partial_jM$ for all $i\neq j$.
\end{itemize}In which case, for any non-empty subset $J\subset \{1,\dots,n\}$, we will write\[\partial_JM=\bigcap_{j\in J}\partial_jM.\]

\end{definition}

Now we introduce some more notation. Let $n$ be a non-negative integer and let $\dd=(d_0,\dots,d_n)$ be an $(n+1)$--tuple of non-negative integers. Write $\R_+=[0,\infty)$ and \[\begin{array}{rclll}\E^\dd&=&\R^{d_0}\times\R_+\times \R^{d_1}\times\R_+\times\dots\times\R_+\times\R^{d_n},&&\\ \E^\dd[a:b]&=&\E^{(d_a,\dots,d_{b-1})},&&\text{for $0\leq a<b\leq n+1$.}\end{array}\]We introduce a standard subdivision of the boundary of $\E^\dd$ into the following faces:
\[\partial_i\E^\dd=\R^{d_0}\times\R_+\times\dots\R^{d_{i-1}}\times\{0\}\times\R^{d_i}\times\dots\times \R_+\times\R^{d_n},\]for $i=1,\dots,n$, so that $\E^\dd$ is a $(d_0+\dots+d_n+n)$--dimensional $\langle n\rangle$--manifold.

\begin{definition}\label{def:neat}A \emph{neat immersion (embedding)} of a smooth $m$--dimensional $\langle n\rangle$--manifold $M$ is a smooth immersion (embedding) $\iota:M \looparrowright \E^\dd$ such that for all $l$, corner points of codimension $l$ in $M$ are sent to corner points of codimension $l$ in $\E^\dd$, and for any $J\subset I\subset \{1,\dots n\}$ the intersection $\partial_IM\cap\partial_J\E^\dd=\partial_J M$ is perpendicular.
\end{definition}

Any compact $m$--dimensional $\langle n\rangle$--manifold admits a neat embedding with respect to some $\E^\dd$ (see Laures \cite[2.1.7]{MR1781277}). Moreover, a neat immersion can always be improved to a neat embedding by perturbation, possibly after increasing the integers in the tuple $\dd$.

\begin{definition}A \em flow category \em $\conf$ is a category with finitely many objects, and equipped with a function $\gr{\cdot}\colon \ob(\conf) \to \Z$, called the \em grading\em, satisfying the following:
\begin{enumerate}
\item $\Hom(x,x)=\{\id\}$ for all $x\in\ob(\conf)$, and for $x\neq y\in\ob(\conf)$, $\Hom(x,y)$ is a smooth, compact $(|x|-|y|-1)$--dimensional $\langle |x|-|y|-1\rangle$--manifold which we denote by $\cM(x,y)$.
\item For $x,y,z\in \ob(\conf)$ with $\gr{z}-\gr{y}=m$, the composition map restricts to
\[
\circ\colon \mathcal{M}(z,y) \times \mathcal{M}(x,z) \to \mathcal{M}(x,y),
\]
which is an embedding into $\partial_m\mathcal{M}(x,y)$. Furthermore,
\[
 \circ^{-1}(\partial_i \mathcal{M}(x,y)) = \left\{ \begin{array}{lr}
                                             \partial_i \mathcal{M}(z,y)\times \mathcal{M}(x,z) & \mbox{for }i<m \\
                                             \mathcal{M}(z,y)\times \partial_{i-m}\mathcal{M}(x,z) & \mbox{for }i>m
                                            \end{array}
\right.
\]
\item For $x\not= y\in \ob(\conf)$, $\circ$ induces a diffeomorphism
\[
 \partial_i\mathcal{M}(x,y) \cong \coprod_{z,\,\gr{z}=\gr{y}+i} \mathcal{M}(z,y) \times \mathcal{M}(x,z).
\]
\end{enumerate}
The manifold $\mathcal{M}(x,y)$ is called the \em moduli space from $x$ to $y$\em, and we also set $\mathcal{M}(x,x)=\emptyset$.
\end{definition}
\begin{definition}
Let $\conf$ be a flow category and write \[k=\min\{|x|\,:\,x\in\ob(\conf)\},\quad n=\max\{|x|\,:\,x\in\ob(\conf)\}-k.\] Suppose $\dd=(d_k,\dots,d_{n+k})$ is an $(n+1)$--tuple of non-negative integers. A \emph{neat immersion (resp.\ neat embedding)} $\iota$ of a flow category $\conf$ relative to $\dd$ is a collection of neat immersions (resp.\ neat embeddings) $\iota_{x,y}:\cM(x,y)\looparrowright\E^\dd[|y|:|x|]$ such that for all objects $x,y,z$ and points $(p,q)\in \cM(x,z)\times\cM(z,y)$,\[\iota_{x,y}(q\circ p)=(\iota_{z,y}(q),0,\iota_{x,z}(p)).\]

If $\iota$ is a neat immersion of a flow category $\conf$ relative to $\dd$, then a \emph{framing} $\phi$ of $(\conf,\iota)$ is a collection of immersions\[\phi_{x,y}:\cM(x,y)\times[-\epsilon,\epsilon]^A\looparrowright \E^\dd[|y|:|x|],\qquad A=d_{|y|}+\dots+d_{|x|-1},\]extending $\iota_{x,y}$, such that for all objects $x,y,z$ and points $(p,q)\in \cM(x,z)\times\cM(z,y)$,\[\phi_{x,y}(q\circ p,t_1,t_2,\dots, t_A)=(\phi_{z,y}(q,t_1,t_2,\dots,t_B),0,\phi_{x,z}(p,t_{B+1},\dots,t_A)),\]where $B= d_{|y|}+\dots+d_{|z|-1}$.
\end{definition}

Note that a (framed) neat immersion of a flow category relative to some $\dd$ can be improved to a (framed) neat embedding relative to some $\dd'\geq\dd$ by perturbation.

\begin{definition} A \emph{framed flow category} is a triple $(\conf,\iota,\phi)$, where $(\conf,\iota)$ is a flow category with neat immersion, relative some $\dd$, and $\phi$ is a framing for $(\conf,\iota)$.
\end{definition}

One can associate to a framed flow category $(\conf, \iota,\phi)$ a geometric realisation as a finite cell complex $|\conf|_{\iota,\phi}=|\conf|$ (see \cite[Definition 3.23]{MR3230817} for precise details of the version we are using). The construction of $|\conf|$ requires several auxilliary choices, one such being an integer $C$ such that $C>|x|$ for all $x\in\ob(\conf)$. To construct $|\conf|$, one takes for each object $x$ of $\conf$ a $(C-m)$--dimensional cell $\mathcal{C}(x)$, where $|x|=m$. These cells are glued together using the data of the framed moduli spaces (essentially via the Thom construction).

The homotopy type of $|\conf|$ depends on the framed embedding $(\iota,\phi)$, but modifying $(\iota,\phi)$ by an isotopy results in a homeomorphic cell complex (see \cite[Lemma 3.25]{MR3230817}). Modifying the other choices involved in constructing $|\conf|$, including possibly increasing $\dd$ to $\dd'\geq\dd$, will result, up to homotopy, in some precise number of reduced suspensions of $|\conf|$ (see \cite{MR3230817}). The following is a well-defined topological object arising from a framed flow category (and independent of choice of $\dd$).

\begin{definition}For a framed flow category $(\conf, \iota, \phi)$ and any object $x$ of $\conf$, let $C$ be the difference between the dimension of the associated cell $\mathcal{C}(x)\subset |\conf|_{\iota,\phi}$ and the grading of $x$. The \emph{stable homotopy type of $(\conf, \iota, \phi)$} is the desuspended suspension spectrum $\mathcal{X}(\conf):=\Sigma^{-C}\Sigma^\infty|\conf|_{\iota,\phi}$, considered up to homotopy of spectra.
\end{definition}

\section{Handle slides in framed flow categories}
\label{sec:handleslide}

In this section, we will define the framed flow category analogue for the process in handlebody theory of sliding one $i$--handle over a second $i$--handle.  This results in a new framed flow category giving rise to the same stable homotopy type.

\begin{theorem}
	\label{thm:handleslide}
	If $(\conf_S,\iota_S,\phi_S)$ is the result of a \emph{handle slide} in $(\conf, \iota, \phi)$, then there is a homotopy equivalence
	\[\mathcal{X}(\conf_S) \simeq \mathcal{X}(\conf) {\rm .}\]
\end{theorem}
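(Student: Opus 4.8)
The plan is to first make the \emph{handle slide} move precise as a modification of moduli spaces. Fix a distinguished pair of objects $x_1,x_2\in\ob(\conf)$ of equal grading $m$, together with a framed embedded arc relating the descending data of $x_1$ to that of $x_2$ (note $\cM(x_1,x_2)=\emptyset$ for dimension reasons, so there is no flowline to worry about). The category $\conf_S$ will agree with $\conf$ except on the moduli spaces $\cM(x_1,y)$ for $|y|<m$ and $\cM(z,x_1)$ for $|z|>m$: here $\cM_S(x_1,y)$ is obtained from $\cM(x_1,y)$ by gluing in, along a collar of an appropriate boundary face, a pushed-in copy of $\cM(x_2,y)$, with the complementary changes made to $\cM_S(z,x_1)$ so that the composition axioms can still be arranged to hold. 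The framing of the glued-in pieces is dictated by the chosen arc.

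The first thing to check is that $\conf_S$ is again a framed flow category. This is a local verification near the modified boundary strata: the glued-in copies have the correct dimension, since $|x_1|-|y|-1=|x_2|-|y|-1$; the $\langle n\rangle$--manifold structure extends over the collar by construction; axioms (2) and (3) hold because the only new boundary faces introduced are precisely of the prescribed product form $\cM_S(z,y)\times\cM_S(x_1,z)$; and the new composition maps remain embeddings into the relevant $\partial_m$. Since $\conf_S$ is built by a collar-gluing of data already carrying a neat immersion and a framing into the spaces $\E^\dd[|y|:|x|]$, after possibly enlarging $\dd$ the result inherits a neat immersion extending the old one, the framings glue, and a perturbation upgrades this to a neat embedding, using the remarks in Section~\ref{sec:ffcs}.

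It remains to show $\mathcal{X}(\conf_S)\simeq\mathcal{X}(\conf)$, which is the substance of the theorem. Fix a common auxiliary constant $C$ and build both geometric realizations. The two cell complexes have the same cells, and only the attaching data involving $\cC(x_1)$ changes: the gluing of $\cC(x_1)$ to lower cells, and the gluing of the higher cells $\cC(z)$, $|z|>m$, along their faces meeting $\cC(x_1)$. Tracing through the Thom construction, the effect of the slide is to pre-compose the attaching map of $\cC(x_1)$ with a pinch map one of whose components is carried, via the framed arc, onto the (unchanged) subcomplex built from $\cC(x_2)$; in other words $|\conf_S|$ is obtained from $|\conf|$ by the standard CW operation of sliding the cell $\cC(x_1)$ over the cell $\cC(x_2)$, with the higher attaching maps altered compatibly. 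The cleanest way to organise this, and the route I would follow, is to interpolate: construct a framed flow category $\conf_{[0,1]}$ containing an extra cancelling pair of objects whose moduli spaces realise the sliding arc, agreeing with $\conf$ at one ``end'' and with $\conf_S$ at the other, so that both $|\conf|$ and $|\conf_S|$ are deformation retracts of $|\conf_{[0,1]}|$. The homotopy equivalence of spectra then follows from this retraction together with the isotopy- and suspension-invariance statements recalled in Section~\ref{sec:ffcs} and in \cite[Lemma 3.25]{MR3230817}.

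The hard part will be this last step. The algebra is immediate --- the induced map on cellular cochain complexes is an elementary row/column operation, hence an isomorphism --- but one must verify that the Thom-collapse gluing data genuinely transforms under the slide by the claimed ``cell slide over a cell'' recipe, controlling how the collar identifications and the framings interact with the pinch map, and that the resulting map is a homotopy equivalence uniformly across all gradings rather than only in cohomology. Making the $\langle n\rangle$--structures, neat embeddings, and framings cohere through the interpolating category $\conf_{[0,1]}$ --- so that it is itself a legitimate framed flow category whose realization admits the two deformation retractions --- is where essentially all of the work lies.
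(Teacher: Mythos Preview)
Your interpolation idea --- introducing an auxiliary framed flow category with an extra cancelling pair of objects --- is exactly the mechanism the paper uses, but the logical order is reversed, and that reversal is the whole point. The paper does not first define $\conf_S$ by directly modifying moduli spaces and then search for an interpolating category; instead it \emph{defines} the handle slide through the intermediate category. One adjoins objects $e,f$ in gradings $i,i+1$ to obtain $\bar{\conf}$, checks that cancelling $f$ against $e$ returns $\conf$ on the nose, and then \emph{declares} $\conf_S$ to be the result of cancelling $f$ against $\bar{x}$. Since \cite[Theorem~2.17]{JLSmorse} already shows handle cancellation preserves the stable homotopy type, the chain $\mathcal{X}(\conf)\simeq\mathcal{X}(\bar{\conf})\simeq\mathcal{X}(\conf_S)$ is immediate. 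There is no separate ``hard part'' to do: the analytic content is entirely absorbed by the previously proved cancellation theorem, and the $\langle n\rangle$--manifold structure, neat immersion and framing of $\conf_S$ are inherited automatically from the cancellation construction rather than verified by hand.

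Your direct description of the slid moduli spaces is also off. After sliding $x$ over $y$ one gets $\cM(x',b')=\cM(x,b)\sqcup\cM(y,b)$ as a \emph{disjoint union}, not a collar gluing; it is $\cM(a',y')$ (not $\cM(a',x')$) that acquires the extra copy $\cM(a,x)$; and for generic $a,b$ with $|b|<i<|a|$ one has $\cM(a',b')=\cM(a,b)\sqcup\bigl(\cM(y,b)\times[0,1]\times\cM(a,x)\bigr)$. These formulae drop out of the cancellation recipe applied to $\bar{\conf}$, whereas trying to write them down \emph{a priori} and then check all the corner compatibilities and framings directly is exactly the work the paper's approach is designed to avoid.
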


We first describe a method for handle sliding, on which we base our flow category analogue.  Given a handlebody $H$, which has $i$--handles $h_1$ and $h_2$, form a new handlebody $H'$ by attaching an $i$--handle $h_3$ and a cancelling $(i+1)$--handle $g$, attached as follows. The attaching sphere for $h_3$ will be glued along the sphere embedded in $H$ formed by taking an embedded connected sum of slightly pushed-off copies of the attaching spheres of $h_1$ and $h_2$. The attaching sphere of $g$ will be glued to intersect the belt spheres of each of our three $i$--handles precisely once positively each (and with no other $i$--handle interactions).

There are then exactly three $i$--handles against which $g$ can be cancelled. The handlebodies which are the effects of cancelling $g$ against $h_1$, $h_2$ and $h_3$ are homeomorphic to, respectively, the effect of $h_1$ slid over $h_2$, the effect of $h_2$ slid over $h_1$, and the original handlebody $H$. The reason for describing handle sliding with such an emphasis on handle cancellation is that it will allow us to build on the flow category handle cancellation technique described in the earlier paper \cite{JLSmorse}.

Let $\conf$ denote a framed flow category $(\conf, \iota, \phi)$ with respect to an $(n+1)$--tuple $\textbf{d}=(d_k,\dots,d_{n+k})$ of non-negative integers. Suppose $x,y$ are objects of $\conf$ with grading $i$. To mimic the geometric process above, we will introduce a pair of cancelling objects $e$ and $f$ in degrees $i$ and $i+1$ respectively. Morphisms will be introduced so that cancelling $f$ against $e$ returns $\conf$, and cancelling $f$ against $x$ returns a new category $\conf_S$ which we will call the \emph{effect of sliding $x$ over $y$}. It is proved in \cite[Theorem 2.17]{JLSmorse} that handle cancellation in flow categories preserves the stable homotopy type of a framed flow category, and hence the stable homotopy types $\mathcal{X}(\conf)$ and $\mathcal{X}(\conf_S)$ will agree.

The main task of this section therefore will be to correctly describe, embed, and frame the \emph{intermediate flow category} obtained by introducing $e$ and $f$.  This is carried out in Subsection \ref{subsec:inter}, with Theorem \ref{thm:handleslide} deduced in Subsection \ref{subsec:sliding}.  We end with a discussion of how handle sliding affects the framings of $1$-dimensional moduli spaces in Subsection \ref{subsec:framings}.

\subsection{The intermediate flow category}
\label{subsec:inter}

We begin with a small lemma that we will use to embed and frame moduli spaces later. Suppose for now that $M^m$ is a smooth $m$--dimensional $\langle n\rangle$--manifold. Then $M\times[0,1]$ has the structure of a smooth $(m+1)$--dimensional $\langle n+1\rangle$--manifold, given by setting $\partial_{n+1}(M\times[0,1])=M\times\{0,1\}$ and $\partial_i(M\times[0,1])=\partial_iM\times[0,1]$ otherwise.

\begin{lemma}\label{lem:arc}Suppose there is a neat immersion $\iota:M\looparrowright \E^{(d_k,\dots,d_{n+k})}$. For real numbers $u>0$, $v\geq 0$, let \[\gamma_{u,v}(t)=(s(t),2t-1)\in \R_+\times\R\] be such that $(t-u)^2+s^2=v^2$. Then the function\[\iota':M\times[0,1]\looparrowright \E^{(d_k,\dots,d_{n+k})}\times \R_+\times\R;\qquad \iota'(p,t):=(\iota(p),\gamma_{u,v}(t))\] is a neat immersion.

Denote by $R:\E^\dd\to \E^\dd$ the reflection in the plane through the origin which is perpendicular to the first coordinate axis of $\R^{d_{n+k}}\subset \E^\dd$. Suppose $\iota$ extends to a neat immersion $\phi:M\times[-\epsilon,\epsilon]^A\looparrowright \E^\dd$, with $A=d_k+\dots+d_{n+k}$, then $-\phi:=R\circ\phi$ determines another possible framing for $\iota$. Define two possible boundary framings for $\iota'|_{M\times\{0,1\}}$by\begin{enumerate}
\item \begin{eqnarray*}\phi_1:M\times\{0,1\}\times[-\epsilon,\epsilon]^{A}\times[-\epsilon,\epsilon]&\looparrowright& \E^\dd\times\R_+\times\R\\
\quad \phi_1(p,0,\textbf{q},r)&=&(\phi(p,\textbf{q}),0, r-1)\\
\phi_1(p,1,\textbf{q},r)&=&(\phi(p,\textbf{q}),0,-r+1)\end{eqnarray*}

\item \begin{eqnarray*}\phi_2:M\times\{0,1\}\times[-\epsilon,\epsilon]^{A}\times[-\epsilon,\epsilon]&\looparrowright& \E^\dd\times\R_+\times\R\\
\quad \phi_2(p,0,\textbf{q},r)&=&(\phi(p,\textbf{q}),0, -r-1)\\
\phi_2(p,1,\textbf{q},r)&=&(-\phi(p,\textbf{q}),0,-r+1)\end{eqnarray*}
\end{enumerate}

Then $\phi_1$ and $\phi_2$ respectively extend to framings for the whole of $\iota'$:\[\phi''_1,\phi_2'':M\times[0,1]\times[-\epsilon,\epsilon]^{A+1}\looparrowright \E^\dd\times\R_+\times\R.\]
\end{lemma}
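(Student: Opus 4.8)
The plan is to construct the extended framings $\phi_1''$ and $\phi_2''$ explicitly by transporting the prescribed boundary framings across the arc $\gamma_{u,v}$, and then to check the neatness conditions by reducing them to the already-verified neatness of $\iota'$ together with the neatness of $\phi$. First I would observe that $\iota':M\times[0,1]\looparrowright \E^\dd\times\R_+\times\R$ has normal bundle which, fibrewise, is the normal bundle of $\iota$ inside $\E^\dd$ (of rank $A$, trivialised by $\phi$) together with one extra normal direction coming from the curve $\gamma_{u,v}$ in the $\R_+\times\R$ factor. The extra normal direction along $\gamma_{u,v}(t)=(s(t),2t-1)$ is spanned by the unit normal to the circle $(t-u)^2+s^2=v^2$; call it $\nu(t)$. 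So a framing of $\iota'$ extending a given framing of $\iota$ amounts to choosing, at each $t$, a section of the first $A$ normal coordinates together with the coefficient of $\nu(t)$, and the constraint is only on the boundary $t\in\{0,1\}$ and on the corner-compatibility with the composition maps.

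The key steps, in order, are: (1) For $\phi_1''$, set $\phi_1''(p,t,\mathbf{q},r) = (\phi(p,\mathbf{q}),\, \gamma_{u,v}(t) + r\,\nu(t))$, i.e.\ keep the $\E^\dd$-framing constant in $t$ and use $r$ as the coordinate in the $\nu(t)$-direction; one then checks that at $t=0$ this restricts to $\phi_1(p,0,\mathbf{q},r)$ and at $t=1$ to $\phi_1(p,1,\mathbf{q},r)$, after identifying $\nu(0)$ and $\nu(1)$ with $\pm$ the second coordinate direction of $\R_+\times\R$ (the sign flip between $t=0$ and $t=1$ is exactly the $r-1$ versus $-r+1$ discrepancy in the statement of $\phi_1$, which is why the arc is needed and not just a straight interval). (2) For $\phi_2''$, the $\E^\dd$-component of the framing must rotate from $\phi$ at $t=0$ to $-\phi = R\circ\phi$ at $t=1$; I would fix a path $g_t$ in $O(A)$ (or just a homotopy of trivialisations) from the identity to the reflection $R$ restricted to the relevant coordinate, and set $\phi_2''(p,t,\mathbf{q},r) = (\phi(p, g_t(\mathbf{q})),\, \gamma_{u,v}(t) - r\,\nu(t))$ or similar, with signs chosen to match $\phi_2$ at the two ends; the sign $-r-1$ at $t=0$ versus $-r+1$ at $t=1$ again forces the use of the curved arc. (3) Verify neatness of $\phi_1''$, $\phi_2''$: corner points of $M\times[0,1]$ are $\partial_J M\times[0,1]$ for $J\subset\{k,\dots,n+k\}$ together with $M\times\{0,1\}$, and since $\iota'$ is already neat and the framing directions $\phi$ and $\nu(t)$ are, by construction, perpendicular to all the $\E^\dd$-faces and to $\R_+\times\{0\}$, perpendicularity of the extended framings at the corners follows from that of $\iota'$ and $\phi$. (4) Finally, check compatibility with the composition/boundary formulas for a framed flow category — but in this lemma $M$ is an abstract $\langle n\rangle$-manifold rather than a whole flow category, so this last point is really just the statement that the framing respects the product structure on faces, which is immediate from the product form of the formulas defining $\phi_1''$ and $\phi_2''$.

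The main obstacle I expect is bookkeeping the orientation/sign conventions so that the two boundary framings $\phi_1$ and $\phi_2$ genuinely extend: one has to be careful about how $\nu(t)$ is oriented along the semicircular arc $\gamma_{u,v}$ relative to the ambient $\R_+\times\R$ coordinates, because the arc starts and ends on the line $\{t=2t-1\}$... i.e.\ on the segment of the second coordinate axis, approaching $\R_+\times\R$ from inside $\R_+$, and the inward normal at $t=0$ points oppositely (in the second coordinate) to the inward normal at $t=1$. This sign reversal is precisely what makes the naive constant extension fail and what the curved arc $\gamma_{u,v}$ is designed to absorb; getting the two cases $\phi_1$ (no reflection, signs $r-1$, $-r+1$) and $\phi_2$ (reflection on the second slot, signs $-r-1$, $-r+1$) both to close up correctly is where the real content lies. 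Everything else — smoothness of $s(t)$, that $\gamma_{u,v}$ is an embedded arc meeting $\R_+\times\{0\}$ perpendicularly at its endpoints for suitable $u,v$, and that immersions extend over the product — is routine and can be cited or left to the reader.
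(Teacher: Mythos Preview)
Your treatment of $\phi_1''$ is essentially the paper's: extend $\phi$ constantly in the $\E^\dd$ factor and use the normal $\nu(t)$ to the arc for the extra slot. That is fine.

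The gap is in your construction of $\phi_2''$. You propose to ``fix a path $g_t$ in $O(A)$ \dots\ from the identity to the reflection $R$'' and keep the $\R_+\times\R$ slot separate as $\gamma_{u,v}(t)-r\,\nu(t)$. But $R$ is a reflection in a single coordinate, so $\det R=-1$, and the identity and $R$ lie in different components of $O(A)$; no such $g_t$ exists. (Phrasing it as ``a homotopy of trivialisations'' does not help: over a contractible base this is again a path in $O(A)$.) A quick check also shows your $\R_+\times\R$ slot does not close up: with $\nu$ the inward normal to the semicircle one has $\nu(0)=+e_2$, $\nu(1)=-e_2$, so $\gamma_{u,v}(1)-r\,\nu(1)=(0,\,r+1)$, not the required $(0,\,-r+1)$. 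In other words, both framing pieces must flip once, and they cannot do so independently.

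What the paper does is couple the reflected $\E^\dd$-direction with the extra $\R_+$-direction. Writing $e_1$ for the first coordinate of $\R^{d_{n+k}}$ (the one flipped by $R$), $\bar e$ for the $\R_+$-coordinate and $e_2$ for the final $\R$-coordinate, the curve $\gamma_{u,v}$ sits in $\{0\}\times\R_+\times\R\subset\R\times\R_+\times\R$, and one frames its $2$-dimensional normal bundle in this $3$-space by rotating $180^\circ$ about the $e_2$-axis, so that halfway along the first frame vector points in the $\bar e$-direction. This rotation is a genuine path in $SO(2)$ acting on $\mathrm{span}(e_1,\bar e)$, and combined with the remaining (constant) $A-1$ directions of $\phi$ it gives a framing of all of $\iota'$ restricting to $\phi_2$ at $t\in\{0,1\}$. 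The point is that the orientation-reversal of $R$ on $\E^\dd$ is absorbed by the $\R_+$-factor, which is exactly why one needs the curved arc and not just a product. The paper also records that this particular rotation is the one compatible with the ``coherent system of paths'' used later to compute framing values in Proposition~\ref{prop:framechanges}; any extension would prove the lemma, but this specific one is what feeds into subsequent calculations.
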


Actually there is more than one way to extend $\phi_2$ to the required framing $\phi_2''$. The proof below uses a specific choice, which we shall carry with us to make calculations later. Precisely, we make the choice which conforms to one of the cases $\overline{\PP\MP}, \overline{\MP\PP}$ in the \textit{coherent system of paths} described in \cite[Lemma 3.1]{MR3252965}, and in the preceding paragraphs there.

\begin{proof}For a manifold with corners $N$, denote by $N^{(l)}$ the set of corner points of codimension $l$. Then \[(M\times[0,1])^{(l)}=(M^{(l)}\times\{0,1\})\sqcup (M^{(l+1)}\times(0,1)).\]But \[\iota'(M^{(l)}\times\{0,1\})\subset (\E^{(d_k,\dots,d_{k+j})})^{(l)}\times\{0\}\times\R,\]and \[\iota'(M^{(l+1)}\times(0,1))\subset (\E^{(d_k,\dots,d_{k+j})})^{(l+1)}\times(\R_+\setminus\{0\})\times\R,\]so that both components of $M\times[0,1])^{(l)}$ are embedded in $(\E^{(d_k,\dots,d_{k+j})}\times\R_+\times\R)^{(l)}$ as required. Moreover, as $\iota$ was a neat embedding, and the arc $\gamma_{u,v}$ meets the $\R$--axis transversely, $\iota'$ meets the boundary transversality condition (2) of Definition \ref{def:neat} above.

The two different boundary framings $\phi_1$, $\phi_2$ will now be extended to framings of $\iota'$. To build the required framing $\phi''_1$, take the embedding \[\phi':M\times[0,1]\times[-\epsilon,\epsilon]^A\looparrowright \E^\dd\times\R_+\times\R\] and extend it in a direction normal to $\gamma_{u,v}$ in $\R_+\times\R$ so that on the boundary we have $\phi_1$ as required. To build the required framing $\phi_2''$, we must look at the normal direction in $\E^\dd$ that was flipped by $R$ and turn that normal direction through $180^\circ$ as we move along $\gamma_{u,v}$. We do this by framing $\gamma_{u,v}\subset \R\times\R_+\times\R$ as follows. Referring to the coordinates of $\R\times\R_+\times\R$ as $e_1$, $\bar{e}$ and $e_2$, respectively, we will rotate $180^\circ$ around the $e_2$--axis, such that the first vector
equals $\bar{e}$ halfway through (compare \cite[Lemma 3.1]{MR3252965}). The rotation of the 2--dimensional frame as we move along $\gamma_{u,v}$ (extended trivially to the other normal directions) now defines the required embedding \[\phi_2'':M\times[0,1]\times[-\epsilon,\epsilon]^{A+1}\looparrowright \E^\dd\times\R_+\times\R {\rm .}\]
\end{proof}

We now add the objects $e$ and $f$ to $\conf$, along with some new moduli spaces. Schematically, the old and the new morphisms are given by the following diagrams (with grading indicated by horizontal levels):

\[\xymatrix @R=1cm @C=0.5cm {\conf:&\ar[d]&&\ar[d]&&&\bar{\conf}:&\ar[d]&&\bullet_f\ar[d]\ar[drr]\ar[dll]&&\ar[d]\\
&\bullet_x\ar[d]&&\bullet_y\ar[d]&&&&\bullet_{\bar{x}}\ar[d]&&\bullet_e\ar[drr]\ar[dll]&&\bullet_{\bar{y}}\ar[d]\\
&&&&&&&&&&&}\]

Specifically, the morphisms and moduli spaces are given as follows.

\begin{definition}\label{def:int}The \emph{intermediate flow category} $\bar{\conf}$ is the flow category with objects
	\[\ob(\bar{\conf})=\{\bar{a}\,|\,a\in\ob(\conf)\}\cup\{e,f\},\]
where $|e|=i$ and $|f|=i+1$ and whose moduli spaces are given by\begin{eqnarray*}
\cM(e,\bar{b})&=&\cM(x,b)\sqcup\cM(y,b),\\
\cM(f,\bar{b})&=&\left\{\begin{array}{lcl}\text{pt.}&&\text{if $\bar{b}=\bar{x},e,\bar{y}$},\\
\cM(e,\bar{b})\times[0,1]&&\text{otherwise}.\end{array}\right.
\end{eqnarray*}In all other cases $\cM(\bar{a},\bar{b})=\cM(a,b)$.
\end{definition}

We now present a neat immersion and two choices of framing for $\bar{\conf}$ relative to $\dd$. With some extra work, it is actually possible to neatly \emph{embed} and frame $\bar{\conf}$ relative to $\dd$. But this is not necessary here as we are free to eventually increase $\dd$ and then perturb the immersion to an embedding, without changing the associated stable homotopy type.

\begin{proposition}\label{prop:immerse}There exists a neat immersion and two framings $(\bar{\iota},\bar{\phi}(+))$, $(\bar{\iota},\bar{\phi}(-))$ of $\bar{\conf}$, relative to $\dd$, which extend the neat embedding and framing $(\iota,\phi)$ of $\conf$.
\end{proposition}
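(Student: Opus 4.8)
The plan is to build $\bar{\iota}$ and the two framings $\bar{\phi}(+),\bar{\phi}(-)$ one family of moduli spaces at a time, keeping $(\iota,\phi)$ where it already lives and feeding only the genuinely new moduli spaces through Lemma~\ref{lem:arc}. First I would dispose of the parts of $\bar{\conf}$ copied verbatim from $\conf$: on each $\cM(\bar{a},\bar{b})=\cM(a,b)$ set $\bar{\iota}_{\bar{a},\bar{b}}:=\iota_{a,b}$ and $\bar{\phi}(\pm)_{\bar{a},\bar{b}}:=\phi_{a,b}$ into the unchanged target $\E^\dd[|b|:|a|]$, so that the composition identities among old objects are inherited from $\conf$ and the restriction to $\conf$ is $(\iota,\phi)$, as ``extend'' demands. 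On $\cM(e,\bar{b})=\cM(x,b)\sqcup\cM(y,b)$ --- where $\iota_{x,b}$ and $\iota_{y,b}$ land in the \emph{same} space $\E^\dd[|b|:i]$, because $|x|=|y|=i$ --- I take $\bar{\iota}_{e,\bar{b}}$ to be $\iota_{x,b}$ on the first summand and $\iota_{y,b}$ on the second, and $\bar{\phi}(\pm)_{e,\bar{b}}=\phi_{x,b}\sqcup\phi_{y,b}$. This is a neat \emph{immersion} but not an embedding, since the two summand images may cross --- which is exactly why the proposition asks only for an immersion --- and its composition identities $\cM(e,\bar{c})\times\cM(\bar{c},\bar{b})\to\cM(e,\bar{b})$ reduce summandwise to those of $\conf$.

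The new content is concentrated in the moduli spaces out of $f$. The three point moduli spaces $\cM(f,e),\cM(f,\bar{x}),\cM(f,\bar{y})$ all lie in $\E^\dd[i:i+1]=\R^{d_i}$ (one may assume $d_i\ge1$, enlarging $\dd$ once at the outset if necessary, which is harmless): I would put $\cM(f,e)$ at $(-1,0,\dots,0)$ and $\cM(f,\bar{x}),\cM(f,\bar{y})$ at $(1,0,\dots,0)$, framing each by the constant framing of $\R^{d_i}$ --- the exact coordinates and the sign choice at $\cM(f,\bar{y})$ being dictated by the endpoint matching below. For a target object $\bar{b}\notin\{\bar{x},e,\bar{y}\}$ we have $\cM(f,\bar{b})=\cM(e,\bar{b})\times[0,1]$, and here I would apply Lemma~\ref{lem:arc} to $M=\cM(e,\bar{b})$ with the immersion $\bar{\iota}_{e,\bar{b}}$ just built, identifying $\E^\dd[|b|:i+1]=\E^\dd[|b|:i]\times\R_+\times\R^{d_i}$ and inserting the lemma's extra $\R_+\times\R$ as the first two slots of the $\R_+\times\R^{d_i}$ tail. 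The lemma's $\iota'$ is then the required neat immersion $\bar{\iota}_{f,\bar{b}}$: by the parametrisation of the arc, its $t=0$ end lies over the point $(-1,0,\dots,0)$, matching the composition $\cM(e,\bar{b})\times\cM(f,e)$, while its $t=1$ end is $\cM(x,b)\sqcup\cM(y,b)$ over the point $(1,0,\dots,0)$, matching $\cM(\bar{x},\bar{b})\times\cM(f,\bar{x})$ and $\cM(\bar{y},\bar{b})\times\cM(f,\bar{y})$. For the framings Lemma~\ref{lem:arc} supplies two extensions $\phi_1''$ and $\phi_2''$: I would let $\bar{\phi}(+)_{f,\bar{b}}$ be the untwisted $\phi_1''$ on both summands, and $\bar{\phi}(-)_{f,\bar{b}}$ be $\phi_1''$ on the $\cM(x,b)\times[0,1]$ summand and the $180^\circ$-twisted $\phi_2''$ --- in the explicit form recorded after Lemma~\ref{lem:arc}, namely the coherent-path choice of \cite{MR3252965} --- on the $\cM(y,b)\times[0,1]$ summand, appending in each case a constant frame for the remaining $\R^{d_i-1}$. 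This single replacement of $\phi_1''$ by $\phi_2''$, together with the matching sign at $\cM(f,\bar{y})$, is the only difference between $\bar{\phi}(+)$ and $\bar{\phi}(-)$.

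It then remains to check the composition identities that involve $f$. For $z\in\{e,\bar{x},\bar{y}\}$ these merely record that $\bar{\iota}_{f,\bar{b}}$ and $\bar{\phi}(\pm)_{f,\bar{b}}$ restrict over the two ends of $[0,1]$ to the products assembled above, which holds by construction of the arc and the choice of point data. For an intermediate $z=\bar{c}$ with $|b|<|c|<i$, the identity for $\cM(f,\bar{c})\times\cM(\bar{c},\bar{b})\to\cM(f,\bar{b})$ is formal: since $\bar{\iota}_{f,-}$ and $\bar{\phi}(\pm)_{f,-}$ are produced by running one and the same recipe (the Lemma~\ref{lem:arc} arc, then the chosen $\phi_1''/\phi_2''$ extension, then the constant filler) on $\bar{\iota}_{e,-}$ and $\bar{\phi}(\pm)_{e,-}$, the $f$-identity over $\bar{c}$ collapses to the $e$-identity over $\bar{c}$ already verified; neatness of $\bar{\iota}_{f,\bar{b}}$ is exactly the content of Lemma~\ref{lem:arc}. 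The one genuinely delicate point --- and where I expect the real care to be needed --- is this coherence bookkeeping: the coordinates of the three point moduli spaces, the parametrisation of the arc $\gamma$, and the two extension patterns $\phi_1''/\phi_2''$ have to be pinned down \emph{simultaneously}, so that the immersion and both framings each satisfy every composition identity and so that the $(-)$-twist stays globally consistent across all the intervals $\cM(e,\bar{b})\times[0,1]$. Lemma~\ref{lem:arc} is built precisely to absorb most of this --- it hands back a neat immersion together with two boundary-matched framings --- so the outstanding work is essentially organisational, though the explicit form of $\phi_2''$ must be fixed with care, since it feeds into the computations later in the paper.
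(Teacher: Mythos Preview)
Your construction of the $(+)$-framing is essentially the paper's (modulo the cosmetic choice of one arc versus two), but the $(-)$-framing as you describe it does not satisfy the framing compatibility conditions, and this is not merely organisational. Look at what case (2) of Lemma~\ref{lem:arc} actually gives on the boundary. With your conventions ($e$ at $t=0$, $\bar{y}$ at $t=1$) and input $\phi=\phi_{y,b}$, the extension $\phi_2''$ restricts at $t=1$ to $(-\phi_{y,b}(p,\mathbf{q}),0,-r+1)$; that is, the $\E^\dd[|b|:i]$-component of the boundary framing is the \emph{reflected} $-\phi_{y,b}$. But the composition identity at that end forces this component to equal $\bar{\phi}(-)_{\bar{y},\bar{b}}=\phi_{y,b}$, the unreflected framing inherited from $\conf$. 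So $\bar{\phi}(-)$ fails to extend $(\iota,\phi)$ at the $\bar{y}$-end. There is a second, independent failure at the $e$-end: $\phi_1$ and $\phi_2$ carry opposite signs in the last normal coordinate at $t=0$, so your $x$-summand and $y$-summand force contradictory framings on the single point $\cM(f,e)$.

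The missing idea is that in the $(-)$-case the framing of $\cM(e,\bar{b})$ itself must change: the paper sets $\bar{\phi}(-)_{e,\bar{b}}=\phi_{x,b}\sqcup(R\circ\phi_{y,b})$, reflecting the $\cM(y,b)$ summand, which is exactly what case (2) of Lemma~\ref{lem:arc} needs as input to match $\phi_{y,b}$ at the $\bar{y}$-end. Correspondingly the paper places $\bar{x}$, $e$, $\bar{y}$ at three \emph{distinct} points and runs the two summands along two separate arcs meeting at the $e$-point, so that the $e$-end sign conflict you would otherwise get can be absorbed. Your assertion that ``the only difference between $\bar{\phi}(+)$ and $\bar{\phi}(-)$'' is the choice of $\phi_1''$ versus $\phi_2''$ on the $y$-summand (and a sign at $\cM(f,\bar{y})$) is precisely where the argument breaks: there is a further, necessary change to $\bar{\phi}(-)_{e,\bar{b}}$, and this reflection is what propagates through Proposition~\ref{prop:framechanges} to distinguish the two kinds of slide.
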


\begin{proof}We will deal with the $(\bar{\iota},\bar{\phi}(+))$ version first and will suppress the ``+'' from the notation of $\bar{\phi}(+)$ while we work on this case. 

When $\bar{a}\neq e,f$, define the embeddings and framings $\bar{\iota}$ and $\bar{\phi}$ of $\cM(\bar{a},\bar{b})$ by the $\iota$ and $\phi$ as in $\conf$.

For $\epsilon$ small, define the embeddings $\bar{\iota}_{f,\bar{x}}$ $\bar{\iota}_{f,e}$, $\bar{\iota}_{f,\bar{y}}$ as the points \[(4\epsilon,0,\dots,0), \quad(0,0,\dots,0), \quad(-4\epsilon,0,\dots,0)\in\R^{d_i}\]respectively. Extend these embeddings to framed embeddings $\bar{\phi}_{f,\bar{x}}$ $\bar{\phi}_{f,e}$, $\bar{\phi}_{f,\bar{y}}$ in the obvious way, by taking the product of $\epsilon$ neighbourhoods in each co-ordinate of $\R^{d_i}$. There are exactly two ways to frame an embedded point in any Euclidean space, which we will denote ``$+$'' and ``$-$''. Declare $\bar{\phi}_{f,\bar{x}}$ and $\bar{\phi}_{f,\bar{y}}$ to both be $+$ framings and $\bar{\phi}_{f,e}$ to be a $-$.

Fixing an object $\bar{b}$ with $|\bar{b}|<i$, we will now immerse and frame $\cM(e,\bar{b})=\cM(x,b)\sqcup\cM(y,b)$. To do this we will simply superimpose the embeddings and framings given by $\iota$ and $\phi$. Note that this might introduce immersion points. Specifically, define the neat immersion\[\bar{\iota}_{e,\bar{b}}:\cM(e,\bar{b})\looparrowright\E^{\textbf{d}}[|e|:|\bar{b}|];\qquad p\mapsto\left\{\begin{array}{lcl}\iota_{x,b}(p)&&\text{if $p\in \cM(x,b)$,}\\ \iota_{y,b}(p)&&\text{if $p\in \cM(y,b)$.}\end{array}\right.\]And similarly, for $\phi_{x,b},\phi_{y,b}$, we obtain the framing $\bar{\phi}_{e,\bar{b}}$.

We will now immerse and frame $\cM(f,\bar{b})$, where $\bar{b}\neq \bar{x}, \bar{y},e$. First apply Lemma \ref{lem:arc}, case (1) to $\iota_{x,b}$, setting $j=i-k-1$ and $(u,v)=(2\epsilon,2\epsilon)$, to obtain neat embeddings $\iota_{x,b}'$, $\phi_{x,b}'$ and $\phi_{x,b}''$. Now apply Lemma \ref{lem:arc}, case (1) a second time, this time to $\iota_{y,b}$, setting $j=i-k-1$ and $(u,v)=(-2\epsilon,2\epsilon)$, to obtain neat embeddings $\iota_{y,b}'$, $\phi_{y,b}'$ and $\phi_{y,b}''$. In order to use these functions to define a neat immersion $\bar{\iota}_{f,\bar{b}}$, we will need to increase the range of $\iota_{y,b}', \iota_{y,b}'$ by a factor or $\R^{d_i-1}$. We do so by simply setting the new $d_i-1$ co-ordinates in the range of each neat embedding to 0. In an abuse of notation, use the same symbols for the new functions - i.e.\ from now on write\begin{eqnarray*}
\iota_{x,b}':&\cM(x,b)\times[0,1]\looparrowright \E^{\textbf{d}}[|x|:|b|]\times\R_+\times\R\times\R^{d_i-1}= \E^{\textbf{d}}[|f|:|\bar{b}|],\\
\iota_{y,b}':&\cM(y,b)\times[0,1]\looparrowright \E^{\textbf{d}}[|y|:|b|]\times\R_+\times\R\times\R^{d_i-1}= \E^{\textbf{d}}[|f|:|\bar{b}|].\end{eqnarray*} Using this, define a neat immersion of $\cM(f,\bar{b})$\[
\bar{\iota}_{f,\bar{b}}:\cM(e,\bar{b})\times[0,1]\looparrowright\E^{\textbf{d}}[|f|,|\bar{b}|];\quad (p,q)\mapsto\left\{\begin{array}{lcl}\iota_{x,b}'(p,q)&&\text{if $p\in \cM(x,b)$,}\\ \iota_{y,b}'(p,q)&&\text{if $p\in \cM(y,b)$.}\end{array}\right.\]To build a framing $\bar{\phi}_{f,\bar{b}}$, we will need to modify \emph{both} the domain and range of the neat embeddings $\phi_{x,b}''$, $\phi_{y,b}''$. Specifically, we must extend the domains of these neat embeddings by a factor of $[-\epsilon,\epsilon]^{d_i-1}$, and we must extend the ranges by a factor of $\R^{d_i-1}$. We do this by simply extending $\phi_{x,b}''$ and $\phi_{y,b}''$ by the obvious inclusion $[-\epsilon,\epsilon]^{d_i-1}\subset\R^{d_i-1}$. Again we abuse notation and use the same symbols for the extended functions:\begin{eqnarray*}
\phi_{x,b}'':&\cM(x,b)\times[0,1]\times[-\epsilon,\epsilon]^A\looparrowright \E^{\textbf{d}}[|x|:|b|]\times\R_+\times\R\times\R^{d_i-1},\\
\phi_{y,b}'':&\cM(y,b)\times[0,1]\times[-\epsilon,\epsilon]^A\looparrowright \E^{\textbf{d}}[|y|:|b|]\times\R_+\times\R\times\R^{d_i-1},\end{eqnarray*}where $A=d_{|\bar{b}|}+\dots+d_{|f|-1}$. So, at last, we have a neat immersion: \[\begin{array}{rcl}\bar{\phi}_{f,\bar{b}}:\cM(e,\bar{b})\times[0,1]\times[-\epsilon,\epsilon]^A&\looparrowright&\E^{\textbf{d}}[|f|,|\bar{b}|]\\(p,q,\textbf{t})&\mapsto&\left\{\begin{array}{lcl}\phi_{x,b}''(p,q.\textbf{t})&&\text{if $p\in \cM(x,b)$,}\\ \phi_{y,b}''(p,q,\textbf{t})&&\text{if $p\in \cM(y,b)$.}\end{array}\right.\end{array}\]Note that the use of the construction from Lemma \ref{lem:arc} is consistent with our choice of $+$ and $-$ framings $\bar{\phi}_{f,\bar{x}}$ $\bar{\phi}_{f,e}$, $\bar{\phi}_{f,\bar{y}}$ from earlier. The ``+'' case is completed.

Now we must describe the required modifications to work on the case $(\bar{\iota},\bar{\phi}(-))$. Again we will suppress the ``$-$'' notation as we work on this case.

The first difference from the $(\bar{\iota},\bar{\phi}(+))$ case is that we declare $\bar{\phi}_{f,\bar{x}}$ be a $+$ framing and both of $\bar{\phi}_{f,\bar{y}}$ and $\bar{\phi}_{f,e}$ to be a $-$ framings. This, in turn, means that for each object $b$, we will want to use case (2) of Lemma \ref{lem:arc} when we come to embed and frame the $\cM(y,b)\times[0,1]$ component of $\cM(f,\bar{b})$. In order to get into case (2) of this lemma, we must endow the $\cM(y,b)$ component of $\cM(e,\bar{b})$ with the framing obtained by taking the image of $\phi_{y,b}$ in $\E^\dd[|y|:|b|]$ and performing the reflection $R$ in the plane through the origin and perpendicular to the first coordinate axis of the $\R^{d_{i-1}}$ factor. Other than these modifications, the construction of the immersions and framings in the $(\bar{\iota},\bar{\phi}(-))$ case proceeds exactly as the $(\bar{\iota},\bar{\phi}(+))$ case.

The $(\bar{\iota},\bar{\phi}(\pm))$ cases are illustrated in the following diagram, where $\alpha$, $\beta$, $\gamma$, $\delta$ denote framings and $-\delta$ represents the effect of reflecting $\delta$ by $R$ as described above.

\[\xymatrix @R=1cm @C=0.5cm {\conf:&\ar[d]_-{\alpha}&&\ar[d]^-{\beta}&&&\bar{\conf}:&\ar[d]_-\alpha&&\bullet_f\ar[d]^-{-}\ar[drr]^-{\pm}\ar[dll]_-{+}&&\ar[d]^-{\beta}\\
&\bullet_x\ar[d]_-{\gamma}&&\bullet_y\ar[d]^-{\delta}&&&&\bullet_{\bar{x}}\ar[d]_-{\gamma}&&\bullet_e\ar[drr]_-{\pm\delta}\ar[dll]^-{\gamma}&&\bullet_{\bar{y}}\ar[d]^-{\delta}\\
&&&&&&&&&&&}\]
\end{proof}

\begin{proposition}\label{prop:choices}Cancelling $e$ against $f$ in $(\bar{\conf},\bar{\iota},\bar{\phi}(\pm))$, using the handle cancellation technique of \cite[\textsection 2.3]{JLSmorse}, results in precisely $(\conf, \iota, \phi)$.
\end{proposition}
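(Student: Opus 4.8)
The plan is to recall the handle cancellation construction of \cite[\textsection 2.3]{JLSmorse}, to check that its hypotheses are met by $(\bar\conf,\bar\iota,\bar\phi(\pm))$ using Definition \ref{def:int} and Proposition \ref{prop:immerse}, and then to run the construction and observe that its output is literally $(\conf,\iota,\phi)$. (This is the ``easy'' cancellation of the pair $\{e,f\}$; the substantive operation, cancelling $f$ against $\bar x$ to produce $\conf_S$, is carried out in Subsection \ref{subsec:sliding}.) Recall that the input of that construction is a framed flow category with objects $e,f$, $|f|=|e|+1$, for which $\cM(f,e)$ is a single point carrying a prescribed framing, together with some normalisation of the moduli spaces adjoining the cancelling pair; its output is a framed flow category on the objects other than $e$ and $f$, whose moduli spaces, embeddings and framings are obtained from those of the input by a modification that alters $\cM(\bar a,\bar b)$ only through the moduli spaces into the cancelling pair, namely $\cM(\bar a,e)$ and $\cM(\bar a,f)$, and out of it.

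The key observation is that in $\bar\conf$ nothing flows into the cancelling pair except along the single morphism $\cM(f,e)$: Definition \ref{def:int} makes $\cM(\bar a,e)=\cM(\bar a,f)=\emptyset$ for every object $\bar a$ coming from $\conf$. Hence the cancellation's modification of $\cM(\bar a,\bar b)$ is trivial, every contribution it could make being indexed by one of the empty sets $\cM(\bar a,e)$, $\cM(\bar a,f)$; the construction therefore does nothing but delete $e$ and $f$ together with the moduli spaces $\cM(e,\bar b)$, $\cM(f,\bar b)$ out of them. For every pair $a,b\in\ob(\conf)$ the output moduli space from $\bar a$ to $\bar b$ is then $\cM(\bar a,\bar b)=\cM(a,b)$, carrying its inherited embedding $\bar\iota_{\bar a,\bar b}=\iota_{a,b}$ and framing $\bar\phi_{\bar a,\bar b}=\phi_{a,b}$; and since no new moduli pieces need accommodating, the output is naturally relative to the same tuple $\dd$. (The algebraic shadow is equally clean: Gaussian elimination of the associated cochain complex at the pivot $d[e]=\pm[f]$ carries no correction term, the component of $d[e]$ landing in the surviving generators being zero, so it simply deletes $[e]$ and $[f]$ and returns the cochain complex of $\conf$.)

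What remains is to verify that the cancellation of \cite[\textsection 2.3]{JLSmorse} genuinely applies to $(\bar\conf,\bar\iota,\bar\phi(\pm))$, and here the care taken in Proposition \ref{prop:immerse} is used. First, $\cM(f,e)$ is a point by Definition \ref{def:int}, and its framing was declared to be a ``$-$'' framing in both the $\bar\phi(+)$ and $\bar\phi(-)$ cases; one checks that this is the framing the cancellation requires (the two framings of a point differ by the reflection $R$, and the choice in Proposition \ref{prop:immerse} was made precisely to match). Second, for $\bar b\neq\bar x,e,\bar y$ the moduli space $\cM(f,\bar b)=\cM(e,\bar b)\times[0,1]$ carries, by Proposition \ref{prop:immerse}, exactly the embedding and framing produced by Lemma \ref{lem:arc} out of $(\iota_{x,b},\phi_{x,b})$ and $(\iota_{y,b},\phi_{y,b})$ via the arcs $\gamma_{\pm 2\epsilon,2\epsilon}$, and this is the collar form the cancellation consumes; in the $\bar\phi(-)$ case one must additionally keep track of the $R$-reflected framing on the $\cM(y,b)$ components and of the specific extension $\phi''_2$ (case (2) of Lemma \ref{lem:arc}, with its $180^\circ$ rotation). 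Third, the three point moduli $\cM(f,\bar x),\cM(f,e),\cM(f,\bar y)$ at grading $i$, placed at the base points $(4\epsilon,0,\dots,0)$, $(0,\dots,0)$, $(-4\epsilon,0,\dots,0)$, are consistent by construction with the $+/-/+$ (respectively $+/-/-$) framing pattern recorded in Proposition \ref{prop:immerse}. Granting these three points, running the construction returns exactly $(\conf,\iota,\phi)$.

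The main obstacle is thus not the combinatorics --- which, as explained, amounts to deletion --- but the bookkeeping needed to see that the embeddings and framings agree \emph{on the nose}, as the word ``precisely'' demands. One must line up the explicit choices of Proposition \ref{prop:immerse} (the arcs $\gamma_{\pm 2\epsilon,2\epsilon}$, the base points, the $+/-$ point framings, and the rotation built into Lemma \ref{lem:arc}(2)) against the explicit normalisation of the cancellation in \cite[\textsection 2.3]{JLSmorse}, and verify in particular that in the $\bar\phi(-)$ case the $R$-reflection on the $\cM(y,b)$ parts, combined with the Lemma \ref{lem:arc}(2) rotation, is unwound by the cancellation so that the output framing is $\phi_{y,b}$ itself rather than $-\phi_{y,b}$ or a rotated variant; the same check must be done, more easily, for $\bar\phi(+)$. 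Once this matching is in place the remainder is a routine unwinding of the definitions.
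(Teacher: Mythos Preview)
Your approach is essentially the paper's: the key observation is that $\cM(\bar a,e)=\emptyset$ for every $\bar a$ coming from $\conf$, so the handle cancellation formula
\[
\cM_H(a,b)=\cM(\bar a,\bar b)\cup\bigl(\cM(f,\bar b)\times\cM(\bar a,e)\bigr)
\]
from \cite[\textsection 2.3]{JLSmorse} degenerates to $\cM(\bar a,\bar b)=\cM(a,b)$, and the output is literally $(\conf,\iota,\phi)$.

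Your final paragraph, however, manufactures a difficulty that is not there. You worry that in the $\bar\phi(-)$ case one must check that ``the $R$-reflection on the $\cM(y,b)$ parts \ldots\ is unwound by the cancellation so that the output framing is $\phi_{y,b}$ itself.'' But the $R$-reflection in Proposition~\ref{prop:immerse} is applied only to the $\cM(y,b)$ component of $\cM(e,\bar b)$, and the rotation of Lemma~\ref{lem:arc}(2) only to $\cM(f,\bar b)$. Both of these are moduli spaces \emph{out of the cancelling pair} and are simply discarded by the cancellation; they never interact with the surviving $\cM(\bar a,\bar b)$ because the glueing term $\cM(f,\bar b)\times\cM(\bar a,e)$ is empty. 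The surviving moduli space $\cM(\bar y,\bar b)$ was defined in $\bar\conf$ to be $\cM(y,b)$ with its original framing $\phi_{y,b}$, untouched. So there is nothing to unwind: the agreement of embeddings and framings is immediate from the vanishing of $\cM(\bar a,e)$, exactly as the paper says in one sentence. Your third paragraph of hypothesis-checking (the arc placements, the $+/-/\pm$ pattern, the collar form) is likewise unnecessary for \emph{this} cancellation; those choices matter for the cancellation of $f$ against $\bar x$ producing $\conf_S$, not here.
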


\begin{proof}Write $(\conf_H,\iota_H,\phi_H)$ for the result of cancelling $e$ against $f$ as described in \cite[\textsection 2.3]{JLSmorse}. According to \cite[Definition 2.9]{JLSmorse}, the objects of $\conf$ and of $\conf_H$ are in 1:1 correspondence. Moreover, recall from \cite[\textsection 2.3]{JLSmorse} that if $a,b$ are objects of $\conf$, then the moduli spaces of $\conf_H$ are given by\[\cM(a,b)=\cM(\bar{a},\bar{b})\cup_f\left(\cM(f,\bar{b})\times \cM(\bar{a},e)\right),\]where $f$ is a certain identification described generally in \cite[\textsection 2.3]{JLSmorse}. However, when $a$ is an object of $\conf$, there are no non-empty moduli spaces $\cM(\bar{a},e)$, so in fact the moduli spaces of $\conf_H$ are the same as those of $\conf$. This simplification, carried through the construction in \cite[Theorem 2.17]{JLSmorse}, actually results in a precise agreement of embeddings and framings so that in fact $(\conf,\iota,\phi)=(\conf_H,\iota_H,\phi_H)$.
\end{proof}

\subsection{Handle sliding in framed flow categories}\label{subsec:sliding}

\begin{definition}The framed flow category $(\conf_S,\iota_S,\phi_S(\pm))$, called the \emph{effect of $(\pm)$--sliding $x$ over $y$ in $(\conf,\iota,\phi)$} is defined as the result of cancelling $f$ against $\bar{x}$ in $(\bar{\conf},\bar{\iota},\bar{\phi}(\pm))$. We will often suppress the ``$+$'' and ``$-$'' from the notation.
\end{definition}

The following lemma is a straightforward consequence of the constructions so far and the definition of handle cancellation given in \cite[Definition 2.9]{JLSmorse}.

\begin{lemma}The framed flow category $\conf_S$ has objects\[\ob(\conf_S)=\{a'\,|\,a\in\ob(\conf)\},\]and moduli spaces given by\begin{eqnarray*}
\cM(x',b')&=&\cM(x,b)\sqcup\cM(y,b),\\
\cM(a',y')&=&\cM(a,x)\sqcup\cM(a,y),
\end{eqnarray*}In all other cases \[\cM(a',b') = \cM(a,b) \sqcup \left(\cM(y,b) \times [0,1] \times \cM(a,x)\right).\]
\end{lemma}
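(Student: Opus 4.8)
The statement is a bookkeeping lemma: it describes the objects and moduli spaces of $\conf_S$, which by definition is obtained from the intermediate category $\bar{\conf}$ (Definition \ref{def:int}) by cancelling $f$ against $\bar x$ using the handle cancellation recipe of \cite[Definition 2.9, \textsection 2.3]{JLSmorse}. So the whole proof is to unwind that recipe in our particular situation. First I would recall the general cancellation formula (as already quoted in the proof of Proposition \ref{prop:choices}): if one cancels a grading-$(i+1)$ object $f$ against a grading-$i$ object $\bar x$, then the new objects are in bijection with the old ones minus $\{f,\bar x\}$ — which here gives exactly $\{a' \mid a\in\ob(\conf)\}$, since $\ob(\bar\conf) = \{\bar a\} \cup \{e,f\}$ and $e$ is being relabelled as (the object corresponding to) $y$. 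Then the new moduli spaces are given by
\[
\cM(a',b') \;=\; \cM(\bar a,\bar b)\;\cup_{\,\mathrm{id}}\;\bigl(\cM(\bar x,\bar b)\times \cM(\bar a, f)\bigr)
\]
(the appropriate version of the formula in \cite[\textsection 2.3]{JLSmorse}, with the roles of $f$ and $\bar x$ as the cancelling pair, using that $\cM(f,\bar x)=\mathrm{pt}$).

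**Key steps.** With that formula in hand the lemma is a finite case check, using Definition \ref{def:int} for the moduli spaces of $\bar\conf$. (i) For $\cM(x',b')$: the object $x'$ corresponds to $e$ (this is the relabelling coming from cancelling $f$ against $\bar x$, exactly as $e$ got relabelled in Proposition \ref{prop:choices} — more precisely one must track which surviving object inherits which role, and here the object we call $x'$ is the one whose outgoing moduli spaces come from $e$), so $\cM(x',b') = \cM(e,\bar b) = \cM(x,b)\sqcup\cM(y,b)$ directly from Definition \ref{def:int}, with no correction term because $\cM(\bar x,\bar b)\times\cM(e,f) = \emptyset$ (there is no moduli space $\cM(e,f)$, $f$ being in higher grading). (ii) For $\cM(a',y')$: similarly $y'$ is the relabelled copy of $e$ on the \emph{target} side, so by the composition/boundary structure of $\bar\conf$ one reads $\cM(a',y') = \cM(\bar a, e)$-style data — but since $\cM(\bar a, e)$ is empty for $\bar a\neq f$, the nonempty contribution is the cancellation term, which unwinds to $\cM(a,x)\sqcup\cM(a,y)$. (iii) For the generic case $\cM(a',b')$ with $a'\neq x'$, $b'\neq y'$: here $\cM(\bar a,\bar b) = \cM(a,b)$ and the correction term is $\cM(f,\bar b)\times\cM(\bar a, e)$-type data; plugging in $\cM(f,\bar b) = \cM(e,\bar b)\times[0,1] = (\cM(x,b)\sqcup\cM(y,b))\times[0,1]$ together with $\cM(\bar a, e)$, which for an object $\bar a$ of $\conf$ is built from $\cM(a,x)\sqcup\cM(a,y)$ via the structure of $\bar\conf$, one gets — after identifying the pieces that actually survive the gluing — the stated $\cM(a,b)\sqcup(\cM(y,b)\times[0,1]\times\cM(a,x))$. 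The appearance of only the $y$-branch in the product term (and only $\cM(a,x)$, not $\cM(a,y)$) is the subtle point: it reflects that we are cancelling $f$ against $\bar x$ specifically, so the half of the $[0,1]$-family glued in is the one running through $e$ and then out along the $x$-side attaching data.

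**Main obstacle.** The only real content is getting the identification $\cup_f$ right — i.e.\ correctly matching the boundary faces of $\cM(f,\bar b)\times[0,1]$-type pieces to the faces of the $\cM(\bar a,\bar b)$ pieces under the composition maps of $\bar\conf$, and in particular verifying that after this gluing the leftover piece is precisely $\cM(y,b)\times[0,1]\times\cM(a,x)$ and not, say, a quotient of it or a piece also involving $\cM(a,y)$. I would do this by invoking the general statement of \cite[Theorem 2.17]{JLSmorse}, which guarantees the result is a bona fide flow category, and then checking the moduli-space formula on gradings/dimensions as a consistency test: $\dim\cM(y,b)\times[0,1]\times\cM(a,x) = (|y|-|b|-1) + 1 + (|a|-|x|-1) = |a|-|b|-1$ since $|x|=|y|=i$, matching $\dim\cM(a',b')$. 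This dimension count, together with the explicit description of which faces of $\bar\conf$ meet $e$ and $f$, pins down the formula. I expect the write-up to be short: cite the cancellation recipe, substitute Definition \ref{def:int}, and organise the three cases.
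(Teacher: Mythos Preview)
Your overall strategy---cite the cancellation recipe from \cite[Definition 2.9]{JLSmorse}, substitute the moduli spaces of $\bar\conf$ from Definition \ref{def:int}, and do a case analysis---is exactly what the paper does, and the paper's proof is in fact just a two-sentence pointer to that recipe plus the observation that $x'$ is the relabelled $e$.

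However, your execution contains a genuine error that would derail the computation. In your Plan you state the cancellation formula as
\[
\cM(a',b') \;=\; \cM(\bar a,\bar b)\;\cup\;\bigl(\cM(\bar x,\bar b)\times \cM(\bar a, f)\bigr),
\]
but this has the roles of $f$ and $\bar x$ swapped. Compare the formula quoted in the proof of Proposition \ref{prop:choices}: when cancelling the grading-$(i+1)$ object against the grading-$i$ object, the correction term is $\cM(\text{upper},\bar b)\times\cM(\bar a,\text{lower})$. Here we cancel $f$ (upper) against $\bar x$ (lower), so the correct formula is
\[
\cM(a',b') \;=\; \cM(\bar a,\bar b)\;\cup\;\bigl(\cM(f,\bar b)\times \cM(\bar a,\bar x)\bigr).
\]
With your version, $\cM(\bar a,f)=\emptyset$ for every $\bar a$ (nothing in $\bar\conf$ maps \emph{into} $f$), so you would never get a correction term. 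The same confusion recurs in step (ii), where you write that $y'$ is a ``relabelled copy of $e$ on the target side''---it is not; $y'$ is simply $\bar y$, and only $x'$ is relabelled from $e$. And in step (iii) you invoke $\cM(\bar a,e)$, which is empty by Definition \ref{def:int}; the factor you need is $\cM(\bar a,\bar x)=\cM(a,x)$.

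Once the formula is corrected, the three cases are immediate: $\cM(x',b')=\cM(e,\bar b)$ with no correction since $\cM(e,\bar x)=\emptyset$; $\cM(a',y')=\cM(a,y)\sqcup\bigl(\cM(f,\bar y)\times\cM(a,x)\bigr)=\cM(a,y)\sqcup\cM(a,x)$; and in the generic case the correction is $\cM(f,\bar b)\times\cM(a,x)=(\cM(x,b)\sqcup\cM(y,b))\times[0,1]\times\cM(a,x)$, of which the $\cM(x,b)$ half is glued as a collar onto $\cM(a,b)$ along the boundary face $\cM(x,b)\times\cM(a,x)\subset\partial\cM(a,b)$ and hence absorbed, leaving the stated disjoint piece $\cM(y,b)\times[0,1]\times\cM(a,x)$. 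Your vague explanation of this last point (``the half running through $e$'') is heading in the right direction but does not identify the actual mechanism, which is the collar gluing in the $\cup_f$ of \cite[\S2.3]{JLSmorse}.
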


\begin{proof}The main thing to note is that after we cancel $f$ against $\bar{x}$ in the intermediate flow category, we are left with object set $\{\bar{a}\,|\,a\in\ob(\conf)\sm\{x\}\}\cup\{e\}$. Hence the object $x'\in\ob(\conf_S)$ in the statement of the lemma is a relabelling of $e$. The rest of the lemma follows directly from \cite[Definition 2.9]{JLSmorse}.
\end{proof}

The following is a schematic for sliding $x$ over $y$, where the framings determined by a $(\pm)$--slide are also shown. The signs are calculated by following the construction through the handle cancellation in \cite[Definition 2.9]{JLSmorse}.

\[\xymatrix @R=1cm @C=0.5cm {\conf:&\ar[d]_-{\alpha}&&\ar[d]^-{\beta}&&&\conf_S:&\ar[d]_-{\alpha}\ar[drr]^-{\mp\alpha}&&\ar[d]^-{\beta}\\
&\bullet_{x}\ar[d]_-{\gamma}&&\bullet_{y}\ar[d]^-{\delta}&&&&\bullet_{x'}\ar[d]_-{\gamma}\ar[drr]_-{\pm\delta}&&\bullet_{y'}\ar[d]^-{\delta}\\
&&&&&&&&&}\]

Now we are in a position to deduce the main result of this section.

\begin{proof}[Proof of Theorem \ref{thm:handleslide}] Define the intermediate framed flow category $(\bar{\conf},\bar{\iota},\bar{\phi}(\pm))$ as in Definition \ref{def:int} and Proposition \ref{prop:immerse}. By Proposition \ref{prop:choices}, the framed flow category obtained from cancelling $e$ against $f$ is precisely $(\conf, \iota,\phi)$. By sufficiently increasing $\dd$ to some $\bar{\dd}$ we may perturb the framed neat immersions $(\bar{\iota},\bar{\phi}(\pm))$, $(\iota_S,\phi_S)$ to framed neat embeddings and hence geometrically realise the CW-complexes $|\bar{\conf}|$ and $|\conf_S|$. It is immediately clear from \cite[Theorem 2.17]{JLSmorse} that there are homotopy equivalences \[\mathcal{X}(\conf)\simeq\mathcal{X}(\bar{\conf})\simeq\mathcal{X}(\conf_S).\]
\end{proof}

\subsection{Framings of 1--dimensional moduli spaces}\label{subsec:framings}

In order to compute cohomology operations such as $\Sq^2$ we need to understand how the framings of the 1--dimensional moduli spaces behave after a handle slide. If we keep in mind the cancellation from the intermediate flow category $\bar{\conf}$ to $\conf_S$ we get
\begin{align*}
\cM(a',x') &= \cM(f,e) \times \cM(a,x), \\
\cM(a',y') &= \cM(a,y) \sqcup \left(\cM(f,y) \times \cM(a,x)\right).
\end{align*}
Furthermore,
\[
\cM(x',b') = \cM(x,b) \sqcup \cM(y,b),
\]
but with the framing of $\cM(y,b)$ depending on whether we do a $(+)$ or a $(-)$-handle slide.

Since $\cM(f,e)$ is a negatively framed point, we get that the framing on $\cM(a',x')$ corresponds to the old framing on $\cM(a,x)$ by the framings in the cancelled category, compare \cite{JLSmorse}. Similarly, if we use the $(-)$--handle slide the framings on $\cM(a',y')$ agree with the original framings on $\cM(a,y)$ and $\cM(a,x)$. In the case of the $(+)$--handle slide however, the copy of $\cM(a,x)$ in $\cM(a',y')$ has an extra reflection in one coordinate.

Recall that interval components $J$ in $\cM(a,b)$ with $|a|=|b|+2$ can be framed in two ways (up to fixed boundary framing). The standard framings are described in \cite[\S 3.2]{JLSmorse} and \cite[\S 3]{MR3252965}, and we write $fr(J)\in \Z/2\Z$ with $fr(J)=0$ if the framing corresponds to the standard choice, and $fr(J)=1$ if not.

Similarly, if $|a|=|b|+1$ and $A\in \cM(a,b)$ is a point, we write $\varepsilon_A\in \Z/2\Z$ for the framing sign of this point.

\begin{proposition}\label{prop:framechanges}
 Let $\conf$ be a framed flow category containing two objects $x,y$ with the same grading, and let $\conf_S$ be the framed flow category obtained from $\conf$ by sliding $x$ over $y$. Let $a,b$ be objects in $\conf$ with $|a|=|b|+2$.
\begin{enumerate}
 \item If $|a|=|x|+2$, then $\cM(a',b')=\cM(a,b)$ for all $b\not=y$ with the same framing. Furthermore
\[
 \cM(a',y') = \cM(a,y) \sqcup \cM(a,x)
\]
with components from $\cM(a,y)$ identically framed. If $J\subset \cM(a,x)$ is an interval component, denote $J'$ for the same component when viewed as a subset of $\cM(a',y')$. Then
\begin{enumerate}
\item If $\conf_S$ is obtained by a $(-)$--handle slide, then
\[
 fr(J')= fr(J).
\]
\item If $\conf_S$ is obtained by a $(+)$--handle slide, then
\[
 fr(J')= 1 + fr(J).
\]
\end{enumerate}
\item If $|a|=|x|+1$, then
\[
 \cM(a',b') = \cM(a,b) \sqcup \cM(y,b)\times [0,1] \times \cM(a,x)
\]
and every component of $\cM(a,b)$ is framed the same way. For each pair of points $(B,A)\in \cM(y,b)\times \cM(a,x)$, denote $I_{B,A}$ the corresponding interval component in $\cM(a',b')$.
\begin{enumerate}
\item If $\conf_S$ is obtained by a $(-)$--handle slide, then
\[
 fr(I_{B,A})=\varepsilon_B.
\]
\item If $\conf_S$ is obtained by a $(+)$--handle slide, then
\[
 fr(I_{B,A})=1+\varepsilon_B.
\]
\end{enumerate}
\item If $|a|=|x|$ and $a\not= x$, then the framings of components in $\cM(a',b')$ are the same as in $\cM(a,b)$. For 
\[
 \cM(x',b')=\cM(x,b) \sqcup \cM(y,b)
\]
the framing values in the new moduli space agree with the framing values in the old moduli spaces.
\end{enumerate}
\end{proposition}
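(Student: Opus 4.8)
The plan is to trace the effect of the handle cancellation of $f$ against $\bar x$ in $(\bar\conf,\bar\iota,\bar\phi(\pm))$ on the $1$--dimensional moduli spaces, using the explicit handle cancellation recipe of \cite[Definition 2.9]{JLSmorse} together with the concrete immersions and framings written down in Proposition \ref{prop:immerse} and the two boundary framings $\phi_1$, $\phi_2$ produced in Lemma \ref{lem:arc}. The key observation is that the framings being tracked all originate either from a copy of an original moduli space $\cM(a,b)$ (carried through unchanged by $\bar\iota$, $\bar\phi$, except possibly for the single reflection $R$ that distinguishes the $(-)$--slide from the $(+)$--slide on the $\cM(y,b)$ pieces), or from one of the product pieces $\cM(f,\bar b)\times[0,1]$ built by Lemma \ref{lem:arc}; in the latter case the relevant framing data is exactly $\phi_1''$ or $\phi_2''$, and the difference between these two is precisely one half-rotation, i.e.\ a sign change of the framing of an interval component. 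This is why every item of the proposition has the shape ``(–)--slide: unchanged; (+)--slide: add $1$'' in the interval cases, and a direct identification in the point cases.

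First I would recall from Subsection \ref{subsec:framings} the already-established identifications of the relevant moduli spaces after cancellation,
\[
\cM(a',x') = \cM(f,e)\times\cM(a,x),\qquad
\cM(a',y') = \cM(a,y)\sqcup\bigl(\cM(f,y)\times\cM(a,x)\bigr),
\]
and $\cM(x',b') = \cM(x,b)\sqcup\cM(y,b)$. For part (1) I would note that $\cM(f,\bar y)$ is a single point with a ``$+$'' framing in the $(-)$--case and the same point but with the $\cM(y,b)$--component carrying the reflection $R$ in the $(+)$--case (this is exactly the difference recorded in the last display of the proof of Proposition \ref{prop:immerse}); since $R$ reflects precisely one normal coordinate, gluing this point onto an interval component $J\subset\cM(a,x)$ to form $J'\subset\cM(a',y')$ either preserves the standard framing (the $(-)$--case) or introduces a single coordinate reflection, which by the conventions of \cite[\S 3.2]{JLSmorse} and \cite[\S 3]{MR3252965} changes $fr$ by $1$. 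For part (2), the interval $I_{B,A}$ is a copy of the arc $\gamma_{u,v}$ times the point $(B,A)$, so its framing is literally the framing of $\gamma_{u,v}$ constructed in Lemma \ref{lem:arc}: in the $(-)$--case this is $\phi_2''$, whose half-rotation is the chosen coherent-system-of-paths normalization, contributing $\varepsilon_B$ (the framing sign of $B\in\cM(y,b)$) and nothing else; in the $(+)$--case we instead run through case (1) of Lemma \ref{lem:arc} against the $R$--reflected framing of the $\cM(y,b)$ piece, and comparing $\phi_1''$ against $\phi_2''$ costs exactly one half-rotation, hence $fr(I_{B,A}) = 1 + \varepsilon_B$. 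For part (3), since $|a|=|x|$ with $a\neq x$ (and likewise we are looking at $\cM(x',b')$ with $|x'|=|x|$), no new product moduli spaces are introduced — by the Lemma preceding this subsection the only new pieces appear when $|a|>|x|$ — so the identifications $\cM(a',b')=\cM(a,b)$ and $\cM(x',b')=\cM(x,b)\sqcup\cM(y,b)$ are the raw superimpositions $\bar\iota_{e,\bar b}$, $\bar\phi_{e,\bar b}$ from Proposition \ref{prop:immerse}, which copy the old framings verbatim.

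**Main obstacle.** The genuinely delicate point is the bookkeeping of signs in parts (1) and (2): one must be certain that the reflection $R$ in Proposition \ref{prop:immerse} and the half-rotation built into $\phi_2''$ in Lemma \ref{lem:arc} do not silently cancel or reinforce each other, and that the ``coherent system of paths'' convention pinned down in Lemma \ref{lem:arc} is the one that makes the $(-)$--slide the framing-preserving one. Concretely, the hard part will be to verify that passing an interval component through the handle-cancellation gluing of \cite[Definition 2.9]{JLSmorse} — which itself involves attaching the arc-shaped piece $\cM(f,\bar b)\times[0,1]$ — contributes the half-rotation with the sign I have claimed and no additional twist; this amounts to a careful inspection of the composition of the two framings along the glued boundary, matched against the standard model for framings of $1$--dimensional moduli spaces. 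Once that single sign is nailed down in one representative case, the remaining cases follow by the same computation with the roles of $x$, $y$ and of $(+)$, $(-)$ interchanged.
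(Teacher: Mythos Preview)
Your overall strategy --- trace the cancellation of $f$ against $\bar x$ through the explicit immersions of Proposition~\ref{prop:immerse} and the two arc framings of Lemma~\ref{lem:arc} --- is exactly the paper's approach. However, several of the concrete identifications you make are backwards, and one ingredient you do not invoke is precisely what makes part~(1) go through.

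First, you have the sign conventions swapped. From the proof of Proposition~\ref{prop:immerse}, the point $\cM(f,\bar y)$ carries the $+$ framing in the $(+)$--case and the $-$ framing in the $(-)$--case, and the reflection $R$ is applied to the $\cM(y,b)$ component of $\cM(e,\bar b)$ only in the $(-)$--case; you state the opposite in both parts~(1) and~(2). This matters because in part~(1) the relevant framing is that of the \emph{point} $B\in\cM(f,\bar y)$, and the reflection $R$ plays no role there at all (it acts on moduli spaces below $e$, whereas here $y$ is the lower object). The paper instead uses the product formula $fr(\{B\}\times J)=fr(J)+1+\varepsilon_B$ from \cite[Proposition~3.6]{JLSmorse}; plugging in $\varepsilon_B=1$ for the $(-)$--slide and $\varepsilon_B=0$ for the $(+)$--slide gives the claimed result directly. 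Without this formula your ``$R$ reflects one normal coordinate'' heuristic is both misplaced and insufficient.

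Second, your treatment of part~(3) is too quick. In the $(-)$--case the framing of $\cM(y,b)\subset\cM(x',b')$ is \emph{not} copied verbatim: the reflection $R$ genuinely flips the first coordinate of $\R^{d_{i-1}}$, which flips the second framing vector of each interval. The paper then argues separately (invoking \cite[Lemma~3.3]{MR3252965}) that flipping the second coordinate sends standard paths to standard paths, so $fr$ is unchanged. You need this step; asserting that the superimposition is ``verbatim'' misses the issue.

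Your ``main obstacle'' paragraph correctly identifies that the sign bookkeeping is the crux, but the errors above show that the obstacle is real for you in the sense that your current identifications would produce the wrong answers if carried through. Fixing the $(+)$/$(-)$ assignments, invoking the product formula from \cite{JLSmorse} for part~(1), and supplying the standard-path argument for part~(3) would bring your proof in line with the paper's.
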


\begin{proof}
Assume $|a|=|x|+2$. In $\conf_S$ the object $x'$ corresponds to $e$ from the intermediate flow category, and the intervals in $\cM(\bar{a},e)$ are of the form $\{-\} \times J$ where $-\in \cM(f,e)$ from the intermediate flow category, and $J$ is an interval in the original $\cM(a,x)$. By \cite[Proposition 3.6.6]{JLSmorse}, $fr(\{-\}\times J)=fr(J)+1+\varepsilon_-$. This means $fr(\{-\}\times J)=fr(J)$.

For $\cM(a',y')$ the intervals in $\cM(a,y)$ do not change, but we also get intervals of the form $\{B\} \times J$ with $B\in \cM(f,\bar{y})$ and $J$ in $\cM(a,x)$. As before, $fr(\{B\}\times J) = fr(J) + 1 + \varepsilon_B$. Now if we use a $(-)$--slide, $\varepsilon_B=1$, and if we use a $(+)$--handle slide, $\varepsilon_B=0$. Hence we get (1).

Now assume $|a|=|x|+1$. In the intermediate flow category we get intervals $J_C$ and $J_B$ in $\cM(f,\bar{b})$ coming from points $C\in \cM(x,b)$ and $B\in \cM(y,b)$. When passing to $\conf_S$ the intervals $\{A\}\times J_C$ give a collar neighborhood to an interval in $\cM(a,b)$ with endpoint $(C,A)\in \cM(x,b)\times \cM(a,x)$, which do not change the framing of the original interval. The intervals $J_B$ are framed depending on whether we have a $(+)$ or a $(-)$--slide. In the case of a $(+)$--slide we use Lemma \ref{lem:arc}.1. 
Note that the sign of the second coordinate changes, while the first coordinate remains $\varepsilon_B$. But during the rotation described in the proof of Lemma \ref{lem:arc}, the second vector equals $-\bar{e}$. By the choice of a coherent system of paths as done in \cite{MR3252965} or \cite{JLSmorse}, this is a standard path if and only if $\varepsilon_B=1$. Hence $fr(J_B)=1+\varepsilon_B$. Passing to $\conf_S$ gives intervals $I_{B,A}$ for each $A\in \cM(a,x)$ with the same framing value as $J_B$ by \cite[Proposition 3.7.6]{JLSmorse}.

If we perform a $(-)$--slide, we use Lemma \ref{lem:arc}.2, and the sign of the first coordinate changes, while the second coordinate remains. In the proof of Lemma \ref{lem:arc} we note that there is a rotation around the $e_2$--axis chosen such that the first standard vector points in the positive direction of $\R_+$. Comparing with the choice of coherent system of paths we get a standard path if and only if the first vector of the framing points in the positive direction, that is, when $\varepsilon_B=0$. This implies $fr(I_{B,A})=\varepsilon_B$.

Finally, assume that $|a|=|x|$. If $a\not=x$, then $\cM(a',b')=\cM(a,b)$ and the framings do not change. If $a=x$, we get $\cM(x',b')=\cM(x,b)\sqcup \cM(y,b)$ and the framing of $\cM(x,b)$ does not change. If we perform a $(+)$--slide, the framing of $\cM(y,b)$ does not change either. In the case of a $(-)$--slide, the first coordinate of $\R^{d_{i-1}}$ is reflected. This means that the second coordinate of the framing of an interval is changing sign. However, flipping the sign of the second coordinate maps standard paths to standard paths, compare also \cite[Lemma 3.3]{MR3252965}, although only half of this statement is proven there.
Hence the framing does not change either.
\end{proof}

\section{The extended Whitney trick in framed flow categories}
\label{sec:whitney}

In \cite[Theorem 3.1.5]{MR1781277}, Laures extends the Pontryagin--Thom Theorem to the setting of manifolds with corners using a suitably defined framed cobordism category. As the cell attachment maps in the Cohen--Jones--Segal construction are defined by a version of the Pontryagin--Thom collapse map, the homotopy classification of these attaching maps (hence the eventual homeomorphism type of the CW-complex) is closely related to the framed cobordism classes of the moduli spaces in a flow category.

In this section we will re-encode this relationship by defining a new technique for modifying framed flow categories, called \emph{the extended Whitney trick} (compare the Whitney trick of \cite[\textsection 1]{JLSmorse}). Suppose $(\conf, \iota, \phi)$ is a framed flow category and that for some objects $x\neq y$ of $\conf$, that $\cM(x,y)=M$ is a manifold with boundary. Let $W$ be a framed cobordism (rel.\ boundary) between $M$ and another manifold with boundary $M'$. We will show how to use $W$ to define a new framed flow category $(\conf_W,\iota_W,\phi_W)$ in which $M$ is replaced by $M'$ (and the other moduli spaces in $\conf$ are modified appropriately). Using the ideas of the Pontryagin--Thom theorem we will deduce the following.

\begin{theorem}
	\label{thm:whitney}
		If $(\conf_W,\iota_W,\phi_W)$ is the result of an \emph{extended Whitney trick} in $(\conf, \iota, \phi)$, then there is a homotopy equivalence \[\mathcal{X}(\conf_W) \simeq \mathcal{X}(\conf) {\rm .}\]
\end{theorem}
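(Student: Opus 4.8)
The plan is to follow the same strategy that worked for handle sliding in Theorem \ref{thm:handleslide}: rather than directly comparing the stable homotopy types of $(\conf,\iota,\phi)$ and $(\conf_W,\iota_W,\phi_W)$, I would introduce an \emph{intermediate flow category} built from the framed cobordism $W$, and then exhibit both $\conf$ and $\conf_W$ as handle cancellations of it, so that the already-established invariance of $\mathcal{X}$ under handle cancellation (\cite[Theorem 2.17]{JLSmorse}) does the work. Concretely, I would add a cancelling pair of objects whose moduli spaces encode the cobordism $W$ between $M=\cM(x,y)$ and $M'$; cancelling the new $(i+1)$--object against one of the new $i$--objects should return $\conf$, while cancelling it against the other should return $\conf_W$. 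The cobordism $W\colon M\rightsquigarrow M'$ rel boundary is precisely the data needed to interpolate, via a Pontryagin--Thom/Thom-collapse argument, between the two attaching maps in the Cohen--Jones--Segal realisation.

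First I would set up the intermediate category carefully: specify its object set, the new gradings, and the new moduli spaces, taking $\cM$ of the new top object to $\bar b$ to be something like $\cM(x,b)\sqcup(\text{a collar built from }W)$, mirroring the role played by $\cM(f,\bar b)=\cM(e,\bar b)\times[0,1]$ in Definition \ref{def:int}. Then I would produce a neat immersion and framing for this intermediate category relative to (a possibly enlarged) $\dd$, extending $(\iota,\phi)$ on the old part; the key local input is a lemma analogous to Lemma \ref{lem:arc} that embeds and frames $W\times[0,1]$ (or $W$ glued to a collar of $M$) neatly into the appropriate $\E^\dd$-slab, using that $W$ comes equipped with a framing rel boundary. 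Since we are allowed to increase $\dd$ and perturb immersions to embeddings without changing $\mathcal{X}(\conf)$, it suffices to do this at the level of immersions. Next I would verify, as in Proposition \ref{prop:choices}, that the handle cancellation of \cite[\textsection 2.3]{JLSmorse} applied to the intermediate category reproduces $(\conf,\iota,\phi)$ on the nose (using that the relevant ``crossed'' moduli spaces are empty), and analogously that cancelling against the other new object yields $(\conf_W,\iota_W,\phi_W)$. The theorem then follows by chaining the two homotopy equivalences $\mathcal{X}(\conf)\simeq\mathcal{X}(\bar\conf)\simeq\mathcal{X}(\conf_W)$ exactly as in the proof of Theorem \ref{thm:handleslide}.

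The main obstacle I anticipate is the compatibility of the framed cobordism $W$ with the rigid face/corner structure of $\langle n\rangle$--manifolds and with the composition-map axioms of a flow category. Unlike the handle-slide case, where the ``cobordism'' was the trivial cylinder $M\times[0,1]$ and everything was explicit, here $W$ is an abstract framed cobordism rel boundary, so I must be careful that: (i) $W$ can be given an $\langle n\rangle$--manifold structure compatible with those of $M$ and $M'$ (matching the faces $\partial_iM$ along $\partial W$), (ii) the new composition maps into the boundary faces of the new moduli spaces still satisfy conditions (2) and (3) of the flow category definition — this forces the cobordism to interact correctly with all the lower moduli spaces $\cM(z,y)$, $\cM(x,z)$, likely requiring $W$ to be a product near those strata, and (iii) the framing of $W$ rel boundary extends the ambient framing $\phi$ coherently. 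The honest Pontryagin--Thom argument — that the Thom-collapse maps associated to $M$ and to $M'$ are homotopic because $W$ provides a framed cobordism, using Laures' corner version of Pontryagin--Thom \cite[Theorem 3.1.5]{MR1781277} — is the conceptual heart, but routing it through the handle-cancellation formalism is what makes it yield an honest move on framed flow categories rather than just an equivalence of realisations; reconciling these two viewpoints is where the real care is needed.
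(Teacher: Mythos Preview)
Your proposed route is genuinely different from the paper's, and while it is not obviously wrong, it is more speculative and would require at least as much technical input as the paper's argument.

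The paper does \emph{not} build an intermediate flow category. Instead it works directly at the level of the CW-realisation: for each object $a$ it constructs a homotopy $F_a\colon[0,1]\times\partial\mathcal{C}(a)\to X^{|a|-1}$ between the attaching map coming from $(\conf,\iota,\phi)$ and that coming from $(\conf_W,\iota_W,\phi_W)$. These homotopies are produced by a Pontryagin--Thom collapse, the track of which is supplied by the framed cobordism $W$ together with the ``push $M$ out of the corner'' construction (Lemma~\ref{lem:smooth}) applied in codimensions $1$, $2$, and $3$, matching cases (1)--(4) of Definition~\ref{def:extW}. Assembling these yields a single space $X$ that deformation retracts onto both $|\conf|$ and $|\conf_W|$. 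So the paper's proof is a direct space-level argument, not a reduction to handle cancellation; indeed, in Section~\ref{sec:smith} the extended Whitney trick is listed as a \emph{primitive} move alongside handle cancellation.

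Your approach of introducing a cancelling pair $(e,f)$ faces a real obstacle that is not just bookkeeping. In the handle-slide case the new top moduli spaces were cylinders $\cM(e,\bar b)\times[0,1]$, which is what made Definition~\ref{def:int} and Lemma~\ref{lem:arc} tractable. Here, by contrast, if $|f|=|x|+1$ then the key new moduli space $\cM(f,\bar y)$ must have $\partial_{m+1}$ containing $M\sqcup M'$ and $\partial_i$ equal to $\partial_iM\times[0,1]$: that is, it must be $W$ itself, not a cylinder. More seriously, to make $\bar\conf$ a flow category you must also define $\cM(e,\bar b)$ for \emph{all} $b$, and for $|b|<|y|$ the corner axioms force $\cM(e,\bar b)$ to have $\cM(y,b)\times M'$ on a face --- which is exactly the content of cases (3) and (4) of Definition~\ref{def:extW}, built using Lemma~\ref{lem:smooth}. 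So your intermediate category already presupposes the construction of $\conf_W$. On top of that you would still need to manufacture $\cM(f,\bar b)$ for $|b|<|y|$ as cobordisms (rel corners) between $\cM(x,b)$ and $\cM_W(\bar x,\bar b)$, and to verify all the composition axioms hold; none of this is provided by the hypothesis, and it is strictly more than the paper requires. In short, your route is not a shortcut: it repackages the same Pontryagin--Thom input through a more elaborate formalism, and the obstacles you anticipate in (i)--(iii) are exactly where it would demand the paper's constructions and more.
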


\subsection{Framed cobordism of manifolds with corners rel.\ boundary}We will not need Laures's full machinery \cite{MR1781277} of cobordism of manifolds with corners in the sequel. The eventual complicated interactions between the moduli spaces in a framed flow category mean that allowing an unrestricted framed cobordism in the sense of \cite{MR1781277} becomes intractable. Instead we will work with the following.

\begin{definition}Suppose $M$ and $M'$ is are $m$--dimensional $\langle n\rangle$--manifolds with $\partial_iM=\partial_iM'$ for $i=1,\dots,n$. An $(m+1)$--dimensional $\langle n+1\rangle$--manifold $W$ is called a \emph{cobordism rel.\ boundary} between $M$ and $M'$ if $\partial_{n+1}W=M\sqcup M'$ and $\partial_iW=\partial_i\times[0,1]$ for $i\neq n+1$.

Suppose an embedding $\tilde{\iota}:W\hookrightarrow \E^{\dd}\times[0,1]$, of a cobordism rel.\ boundary, meets $\E^\dd\times\{0,1\}$ orthogonally in $M\sqcup M'$, and induces neat embeddings \[\begin{array}{rrcl}\tilde{\iota}|_M:&M&\hookrightarrow& \E^\dd\times\{1\},\\
\tilde{\iota}|_{M'}:&M'&\hookrightarrow& \E^\dd\times\{0\}.\end{array}\]Then $\tilde{\iota}$ is an \emph{embedded cobrdism rel.\ boundary} between the neat embeddings $(M',\tilde{\iota}|_{M'})$ and $(M,\tilde{\iota}|_{M})$. Suppose furthermore that there exists a framing $\tilde{\phi}$ of such a $(W,\tilde{\iota})$ and that $\tilde{\phi}$ meets $\E^\dd\times\{0,1\}$ orthogonally. Then the framing $\tilde{\phi}$ determines framings $\phi$ and $\phi'$ of $(M,\tilde{\iota}|_M)$ and $(M',\tilde{\iota}|_{M'})$ respectively. If such a $(W,\tilde{\iota}, \tilde{\phi})$ exists, it is called a \emph{framed cobordism rel.\ boundary} between $(M',\tilde{\iota}|_{M'},\phi')$ and $(M,\tilde{\iota}|_{M},\phi)$. $(M',\tilde{\iota}|_{M'},\phi')$ and $(M,\tilde{\iota}|_{M},\phi)$ are called \emph{framed cobordant rel.\ boundary} if there exists a framed cobordism rel.\ boundary between them, possibly after enlarging the $\dd\leq\dd'$.
\end{definition}

\begin{remark}Examples of framed cobordisms rel.\ boundary $W\subset \E^{\dd'}\times[0,1]$ whose framed boundary $M\sqcup M'$ can be framed embedded in a smaller space $\E^\dd$ are fairly common. This is why we have allowed the possibility of enlarging the ambient space in the final definition above. For instance, the generator $1\in\Omega_1^{fr}\cong\Z/2\Z$ can be embedded, along with a framed normal neighbourhood, as $\phi:S^1\times D^2\hookrightarrow\R^3$. But the minimum embedding dimension for a framed nullcobordism of $\phi\sqcup\phi$ is 5.\end{remark}

\subsection{Pushing $M$ out of the corner}

We now make a slight digression into an easy but technical construction we will need later. Suppose $(W,\tilde{\iota},\tilde{\phi})$ is a framed embedded cobordism rel.\ boundary between $m$--dimensional $\langle m\rangle$--manifolds $(M',\iota',\phi')$ and $(M, \iota,\phi)$, where $\iota, \iota'$ are with respect to some $\dd=(d_0,d_1,\dots,d_m)$. Denote by $U_\eta\subset (\R_+)^N$ the open ball at the origin with (small) radius $\eta>0$, $\overline{U_\eta}$ the corresponding closed ball, and by $H_\eta=\overline{U_\eta}\sm U_\eta$. Suppose that \[\iota_X:X\hookrightarrow \E^{\dd}\times(\R_+)^N\]is a neat embedding of an $(m+N)$--dimensional $\langle m+N\rangle$--manifold, with framing $(X,\iota_X,\phi_X)$. Moreover, suppose that near the `corner' $\E^\dd\times\textbf{0}$ this embedding is\[\iota_X(X)\cap \E^\dd\times U_{2\eta}=\iota(M)\times U_{2\eta}\]and that here the framing $\phi_X$ agrees with the framing \[M\times U_{2\eta}\times[-\epsilon,\epsilon]^A\hookrightarrow \E^\dd\times U_{2\eta};\qquad (p,q,\textbf{t})\mapsto (\phi(p,\textbf{t}),q),\]where $A=d_0+\dots+d_m$.

Later on we will need a mechanism to `push $M$ out of the corner' and replace it with $M'$. Roughly speaking, this is achieved by glueing together the two spaces\begin{eqnarray*}X'&=&\iota_X(X)\sm (M\times U_\eta),\\Y&=&(M'\times \overline{U_\eta})\cup_{M'\times H_\eta}(W\sm(M'\times [0,\eta))\times H_\eta,\end{eqnarray*}and then embedding the result appropriately (compare Figure \ref{fig:push}). But to ensure the smooth structures can be made to agree, we will need to use the standard technique of overlaying open collar neighbourhoods.

\begin{figure}
\begin{tikzpicture}
\draw [<->] (0,5) -- (0,0) -- (-5,0);
\draw (-2,0) arc [radius=2, start angle=180, end angle= 90];
\draw (-4,0) arc [radius=4, start angle=180, end angle= 90];
\node at (-4.9,-0.4) {$\R_+$};
\node at (0.4,4.9) {$\R_+$};
\node at (0.3,4) {$2\eta$};
\node at (0.25,2) {$\eta$};
\node at (-1.5,2.5) {$M\times(U_{2\eta}\sm U_\eta)$};
\node at (-0.9,0.7) {$M\times U_\eta$};

\node at (1.65,3.35) {Push $M$ out } ;
\node at (1.65,3)  {of the corner};
\draw [->] [decorate, decoration={snake}] (0.7,2.5) -- (2.6,2.5);

\draw [<->] (7,5) -- (7,0) -- (2,0);
\draw (5,0) arc [radius=2, start angle=180, end angle= 90];
\draw (3,0) arc [radius=4, start angle=180, end angle= 90];
\draw [dashed]  (3.3,0) arc [radius=3.7, start angle=180, end angle= 90];
\draw [dashed]  (2.7,0) arc [radius=4.3, start angle=180, end angle= 90];
\draw [red] (5.0681,0.5176) -- (3.1362, 1.0352);
\draw [red] (5.2679,1) -- (3.5358, 2);
\draw [red] (5.5857,1.4142) -- (4.1715,2.8284);
\draw [red] (6,1.732) -- (5, 3.4641);
\draw [red] (6.4823,1.9318) -- (5.9647,3.8637);
\node at (2.1,-0.4) {$\R_+$};
\node at (7.4,4.9) {$\R_+$};
\node at (7.3,4) {$2\eta$};
\node at (7.25,2) {$\eta$};
\node at (6.1,0.7) {$M'\times U_\eta$};
\node [red] at (3.7,4.8) {$(W\sm M'\times[0,\eta))\times H_\eta$};
\draw [->] (3.7,4.4) to [out=270,in=140] (5,2.6);
\draw [decorate,decoration={brace,amplitude=4pt,raise=2pt}] (3.3,-0.1) -- (2.7,-0.1);
\node at (4.15,-0.6) {\small $M\times H_\eta\times(-\eta',\eta')$};

\end{tikzpicture}
\caption{Example of pushing $M$ out of the corner. Here $N=2$ and $\E^\dd$ is thought of as the direction perpendicular to the page. Each radial red line is a copy of $W$ (minus a collar at the $M'$ boundary).}\label{fig:push}
\end{figure}
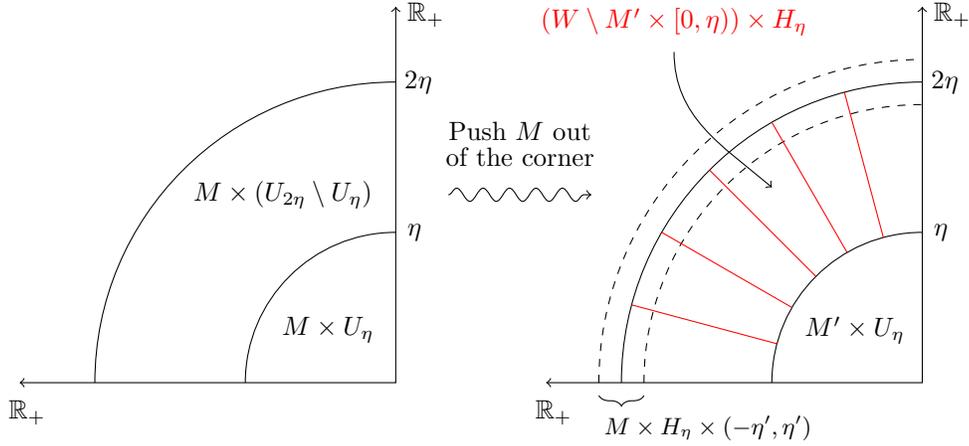

\begin{lemma}\label{lem:smooth}With notation as above, the $(m+N)$--dimensional topological manifold with corners $X'\cup_{M\times H_\eta} Y$ is homeomorphic to a smooth $(m+N)$--dimensional $\langle m+N\rangle$--manifold $Z$, which has a neat embedding $Z\hookrightarrow \E^\dd\times (\R_+)^N$ such that $M'$ is in the corner $\E^\dd\times \overline{U_\eta}$. Moreover the embedding of $Z$ can be framed compatibly with the framings of $(W,\tilde{\iota},\tilde{\phi})$ and of $(X, \iota_X,\phi_X)$.
\end{lemma}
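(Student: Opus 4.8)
The plan is to build $Z$ as a topological space by the gluing $X' \cup_{M \times H_\eta} Y$, then upgrade the smooth structure by the collar argument, and finally produce the neat embedding and framing by transporting the given data for $X$ and $W$. First I would set up the collars: choose an open collar neighbourhood of $M' \times H_\eta$ inside $W \sm (M' \times [0,\eta)) \times H_\eta$ of the form $M \times H_\eta \times (-\eta', \eta')$ (as indicated at the bottom of Figure \ref{fig:push}), and a matching open collar of $M \times H_\eta$ inside $X' = \iota_X(X) \sm (M \times U_\eta)$; near the corner $X$ looks like $\iota(M) \times U_{2\eta}$ by hypothesis, so such a collar is available as $M \times H_\eta \times (-\eta', \eta')$ with the radial coordinate of $(\R_+)^N$ playing the role of $(-\eta', \eta')$ after reparametrisation. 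Overlaying these two open collars identifies an open neighbourhood of the gluing locus in $X'$ with one in $Y$, which is the standard mechanism for endowing $X' \cup_{M \times H_\eta} Y$ with a smooth structure; call the result $Z$. Since the two pieces are smooth $\langle m+N \rangle$-manifolds and the faces match up ($\partial_i W = \partial_i M \times [0,1]$ for $i \neq n+1$, and the faces of $X$ restrict compatibly), $Z$ is a smooth $\langle m+N \rangle$-manifold with the face structure inherited from $X$, and $M'$ now sits in the corner $\E^\dd \times \overline{U_\eta}$ in place of $M$.

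Next I would construct the neat embedding $Z \hookrightarrow \E^\dd \times (\R_+)^N$. On $X' \sm (\text{collar})$ one simply keeps $\iota_X$. On the piece $M' \times \overline{U_\eta}$ one uses $(p, q) \mapsto (\iota'(p), q)$, which lands in $\E^\dd \times \overline{U_\eta}$ as required. On the transitional region — the radial copies of $W$ drawn in red in Figure \ref{fig:push} — one embeds $W \sm (M' \times [0,\eta)) \times H_\eta$ by spreading the cobordism $\tilde{\iota}: W \hookrightarrow \E^\dd \times [0,1]$ radially: for a point $h \in H_\eta$ write $h = \rho \cdot \omega$ with $\rho \in (\eta, 2\eta)$ (or the appropriate half-open range) and $\omega$ a unit direction, and send $(w, h) \mapsto (\tilde{\iota}_{\text{proj}}(w), \text{[radial interpolation between } \eta \text{ and } 2\eta\text{]} \cdot \omega)$, arranged so that at $\rho = 2\eta$ this matches $\iota(M) \times U_{2\eta}$ (agreeing with $\iota_X$) and as $\rho \to \eta$ it limits onto $\iota'(M') \times \{\,\cdot\,\}$. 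One checks that, after the collar-overlaying reparametrisation, these three prescriptions glue to a well-defined smooth embedding, that it meets the various faces $\partial_j(\E^\dd \times (\R_+)^N)$ perpendicularly (inherited from neatness of $\iota_X$, $\iota'$, and orthogonality of $\tilde{\iota}$ at $\E^\dd \times \{0,1\}$), and hence is neat in the sense of Definition \ref{def:neat}.

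Finally, the framing $\phi_Z$ is obtained piecewise in exactly the same pattern: on $X'$ it is $\phi_X$; on $M' \times \overline{U_\eta}$ it is $(p, q, \textbf{t}) \mapsto (\phi'(p, \textbf{t}), q)$; on the radial $W$-region it is the radial spreading of $\tilde{\phi}$. The hypotheses are set up precisely so these agree on overlaps: near the corner $\phi_X$ equals $(p, q, \textbf{t}) \mapsto (\phi(p, \textbf{t}), q)$, and $\tilde{\phi}$ meets $\E^\dd \times \{0,1\}$ orthogonally restricting to $\phi$ and $\phi'$ there, so the pieces match after the same collar reparametrisation used for the smooth structure and for $\iota$. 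I expect the main obstacle to be the bookkeeping in the transitional region: making the radial spreading of $(W, \tilde{\iota}, \tilde{\phi})$ genuinely smooth where it is glued to $X'$ at the outer radius $2\eta$ and where it degenerates onto $M'$ at the inner radius — this is where the open-collar overlay has to be invoked carefully so that no corner or smoothness mismatch is introduced, and where one must verify that perpendicularity to all faces survives the interpolation. The rest is routine once the collars are fixed.
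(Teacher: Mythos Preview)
Your proposal is correct and follows essentially the same approach as the paper: both build $Z$ by overlaying open collar neighbourhoods of $M\times H_\eta$ in $X'$ and in $Y$, embed by keeping $\iota_X$ on the outer piece and spreading $\tilde{\iota}(W)$ radially on the inner piece, and then patch the framings $\phi_X$, $\phi'$, and $\tilde{\phi}$ using the corner-compatibility hypothesis on $\phi_X$. One small slip: in your first paragraph you write ``collar neighbourhood of $M'\times H_\eta$'' where you mean $M\times H_\eta$ (the $M$-end of $W$ is where the gluing to $X'$ happens); the rest of your sentence shows you have the right picture.
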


In practice, we will only ever make use of the cases $N=1,2,3$ but it is no extra effort to prove Lemma \ref{lem:smooth} in generality. Also note that if $M$ is in a codimension $N$ corner then it is also in $N$ codimension $N-1$ corners. In fact the restriction of the construction in the proof to some choice of $(\R_+)^{N-1}\subset (\R_+)^N$ precisely reproduces the proof in this lower codimension. We will use this fact later.

\begin{proof}The topological manifold with corners $Y$ already carries a smooth structure as the two components that are glued to make $Y$ may be given their respective product smooth structures, and near the glueing boundary $M'\times H_\eta$ these product smooth structures agree precisely with one another.

Any radial direction of $(\R_+)^N$ determines a copy of $\E^\dd\times \R_+$. We may embed $\iota_Y:Y\hookrightarrow\E^\dd\times(\R_+)^N$ such that in any radial direction we have rescaled the embedding $\tilde{\iota}(W)\subset \E^\dd\times[0,1]\subset \E^\dd\times\R_+$ by a factor of $\eta$. Choose another small $\eta'<<\eta$. There is now a collar neighbourhood $\iota_Y(M\times H_\eta)\times(-\eta',0]$ of the embedded boundary component $M\times H_\eta\subset Y$, where the collar direction is radial in $(\R_+)^N$. Similarly, the boundary component $M\times H_\eta\subset X'$ has a radial collar neighbourhood $M\times H_\eta\times[0,\eta')\subset Y$. Writing $(p, t)\sim(p,(1-t)/2)$ for $p\in M\times H_\eta'$ and $t\in(0,\eta'/2)$, define the identification space\[Z=X'\sqcup Y/\sim.\]The smooth structures on $Y$ and $X'$ can now be made compatible on this open collar overlay, so that $Z$ has the structure of a smooth $\langle m+N\rangle$--manifold. Moreover, by radially dilating the embedding $\iota_X|_{X'}$ of $X'$, and combining with the embedding of $Y$, $Z$ is seen to have a (neat) embedding in $\E^\dd\times\R_+\times\R_+$ so that $M'$ is in the corner as required.

Now we turn to the framings. The framing of the corner embedding of $M'\times \bar{U_\eta}$ is given by extending the framing $\phi'$ of $\iota':M\hookrightarrow \E^\dd$ trivially to the product. Similarly we may frame the embedding of the product $(W\sm(M'\times [0,\eta)))\times H_\eta$ by extending $\tilde{\phi}$. Note that these framings together form a framing of the embedded $Y$. Now, we assumed that $\phi_X$ agreed with $\phi$ near the corner $\E^\dd\times \textbf{0}$. Hence to frame $Z$, we can simply glue the framing on $X'$ coming from $X$ to the framing on $Y$ as they precisely agree on the collar overlap.
\end{proof}

\subsection{The extended Whitney trick}\label{subsec:whitney}

Let $(\conf,\iota,\phi)$ be a framed flow category with respect to $\dd=(d_k,\dots,d_{n+k})$. Suppose $\conf$ has objects $x, y$ with $|x|-|y|=m+1$ and write $(M,\iota,\phi)=(\cM(x,y),\iota_{x,y},\phi_{x,y})$. Suppose there is a framed cobordism rel.\ boundary $(W,\tilde{\iota},\tilde{\phi})$ between $(M,\iota,\phi)$ and some other embedded, framed $m$--dimensional $\langle m\rangle$--manifold $(M',\iota',\phi')$. We will define a new framed flow category $(\conf_W,\iota_W,\phi_W)$ such that there is a homotopy equivalence $\mathcal{X}(\conf)\simeq\mathcal{X}(\conf_W)$.

\begin{definition}\label{def:extW}With $(\conf,\iota,\phi)$ as above we will define $(\conf_W,\iota_W,\phi_W)$. First, we define the object set $\ob(\conf_W)=\{\bar{a}\,|\,a\in \ob(\conf)\}$. The moduli spaces of $\conf_W$ are given as follows:\begin{enumerate}
\item $\cM(\bar{x},\bar{y})=M'$.

\item If $a\in\ob(\conf)$ with $\cM(a,x)\neq \emptyset$, then the moduli space $\cM(\bar{a},\bar{y})$ is essentially given by glueing $W\times\cM(a,x)$ to $\cM(a,y)$ along the embedded boundary component\[M\times \cM(a,x)\subset \partial_{m+1}\cM(a,y).\]In order to ensure the smooth structures can be modified to match up when glueing, take small collar neighbourhoods $M\times\cM(a,x)\times[0,\eta)$ and $M\times\cM(a,x)\times(-\eta,0]$ inside $\cM(a,y)$ and $W\times\cM(a,x)$ respectively. Overlay the collars by setting $(p, q, t)\sim(p,q,(1-t)/2)$ for $t\in(0,\eta/2)$. Then define\[\begin{array}{l}\cM(\bar{a},\bar{y}):=((\cM(a,y)\sm (M\times\cM(a,x))\\
\qquad\qquad\qquad\qquad\qquad\qquad\sqcup (W\times \cM(a,x)\sm (M\times\cM(a,x))))/\sim.\end{array}\]

\item If $b\in\ob(\conf)$ with $\cM(y,b)\neq \emptyset$, then again take collar neighbourhoods $\cM(y,b)\times M\times[0,\eta)$ and $\cM(y,b)\times M\times(-\eta,0]$, now of \[\cM(y,b)\times M\subset \partial_{|y|-|b|+1}\cM(x,b)\] and $M\times \cM(y,b)\subset W\times\cM(y,b)$ respectively. Similarly to the overlay before, use $(q, p, t)\sim(q,p,(1-t)/2)$ for $t\in(0,\eta/2)$ to define\[\begin{array}{l}\cM(\bar{x},\bar{b}):=((\cM(x,b)\sm (M\times\cM(y,b))\\
\qquad\qquad\qquad\qquad\qquad\qquad\sqcup (W\times \cM(y,b)\sm (M\times\cM(y,b))))/\sim.\end{array}\]

\item If $a,b\in\ob(\conf)$ with both $\cM(a,x), \cM(y,b)\neq \emptyset$, then the following is a subset of the boundary $\partial \cM(a,b)$: \[\cM(y,b)\times M\times\cM(a,x)\subset \partial_{|y|-|b|+1}\cM(a,b)\cap\partial_{|x|-|b|+1}\cM(a,b).\]Take two 1--dimensional collars neighbourhoods of $\cM(y,b)\times M\times\cM(a,x)$. The first collar is taken inside $\partial_{|x|-|b|+1}\cM(a,b)$, and perpendicular to the boundary $\partial_{|y|-|b|+1}\cM(a,b)$, the second collar is taken \emph{vice versa}. Note that the product of the collars then defines a standard open neighbourhood inside the full moduli space $\cM(a,b)$:\begin{equation}\label{eq:nhood}\cM(y,b)\times M\times\cM(a,x)\times[0,2\eta)\times[0,2\eta)\subset \cM(a,b).\end{equation}In order to push $\cM(y,b)\times M\times\cM(a,x)$ out of the corner and replace it with $\cM(y,b)\times M'\times\cM(a,x)$, we will apply Lemma \ref{lem:smooth} to the standard open neighbourhood in line (\ref{eq:nhood}). Precisely, the embedded cobordism rel.\ boundary we will use as the input for the Lemma \ref{lem:smooth} is $(\cM(y,b)\times W\times\cM(a,x),\iota_{y,b}\times\tilde{\iota}\times\iota_{a,x})$ and the space $(X,\iota_X)$ is $(\cM(a,b),\iota_{a,b})$. Now identify the two collar directions of $\cM(y,b)\times M\times\cM(a,x)$ with axes of $\R_+\times\R_+$ so that we have $U_{2\eta}\subset [0,2\eta)\times [0,2\eta)$, the open ball of radius $2\eta$ from Lemma \ref{lem:smooth}. Corresponding to our inputs $X$ and $\cM(y,b)\times W\times\cM(a,x)$, there are also spaces $X'$ and $Y$ defined by Lemma \ref{lem:smooth}. Hence we define \[\cM(\bar{a},\bar{b}):=X'\sqcup Y/\sim,\]where $\sim$ is the equivalence relation defined in Lemma \ref{lem:smooth}. Note that this construction agrees with the constructions in (2) and (3), above, when considering the induced moduli spaces $\cM(\bar{x},\bar{b})$ and $\cM(\bar{a},\bar{y})$ in the boundary of $\cM(\bar{a},\bar{b})$.

\item In all other cases define $\cM(\bar{a},\bar{b})=\cM(a,b)$. Note this includes the case of $c\in\ob({\conf})$ with $\cM(x,c)\neq\emptyset\neq\cM(c,y)$.

%This case of $c$ is really where we are using the assumption of \emph{rel.\ boundary} cobordism, otherwise I think it doesn't appear.

\end{enumerate}

We now describe the embedding and framing $(\iota_W,\phi_W)$. For the case (1), we have $((\iota_W)_{\bar{x},\bar{y}}, (\phi_W)_{\bar{x},\bar{y}}):=(\iota',\phi')$, and for the case (5) $((\iota_W)_{\bar{a},\bar{b}}, (\phi_W)_{\bar{a},\bar{b}}):=(\iota_{{a},{b}}, \phi_{{a},{b}})$. In cases (2), (3) and (4), the embeddings and framings differ from those of $(\conf,\iota,\phi)$ only in a small neighbourhood of the glueing regions. The difference is determined by the embedding and framing of $Z$ which we defined in Lemma \ref{lem:smooth}.

\end{definition}

Finally we deduce the main result of this section.

\begin{proof}[Proof of Theorem \ref{thm:whitney}]First, we will assume $\dd$ has been enlarged enough that the framed cobordism rel.\ boundary $(W,\tilde{\iota},\tilde{\phi})$ is an embedding in $\E^\dd[|x|:|y|]\times[0,1]$. This does not affect the eventual stable homotopy type of the framed flow category.

Now, for each $a\in\ob(\conf)$, we will define a continuous map\[F_a:[0,1]\times\partial\mathcal{C}(a)\to X^{|a|-1};\qquad (t,p)\mapsto F_{a,t}(p),\]where $X^i$ is defined inductively for increasing $i$ by setting $X^0=\{pt\}$,\[X^i=X^{i-1}\cup_{F_a}\left([0,1]\times\mathcal{C}(a)\right),\]and with the union taken over all $a$ such that $|a|=i$. Furthermore, the maps $F_a$ will be defined such that $F_{a,0}$ and $F_{a,1}$ are the attaching maps for cells $\mathcal{C}(a)$ and $\mathcal{C}(\bar{a})$ in the CW-complexes $|\conf|$ and $|\conf_W|$ respectively. As such, the space $X:=\cup_iX^i$ can easily be seen to deformation retract onto each of $|\conf|$ and $|\conf_W|$, so that they are homotopy equivalent to one another.

It remains to define the $F_a$. Recall that $\cM(a,b)\times\mathcal{C}(b)$ is embedded as \[\begin{array}{l}\mathcal{C}_b(a)=[0,R]\times[-R,R]^{d_B}\times\dots\times[0,R]\times[-R,R]^{|b|-1}\times\{0\}\times\mathcal{C}_{b,1}\\ \qquad\qquad\qquad\qquad\qquad \times\{0\}\times[-\epsilon,\epsilon]^{|a|}\times\dots\times\{0\}\times [-\epsilon,\epsilon]^{d_{A-1}}\subset\partial\mathcal{C}(a),\end{array}\]where $\mathcal{C}_{b,1}$ is the subset of $\E^\dd[|a|:|b|]$ given by the framed embedding of $\cM(a,b)$. The cell attaching map for $\mathcal{C}(a)$ is given on $\mathcal{C}_b(a)\cong \cM(a,b)\times\mathcal{C}(b)$ by the projection to $\mathcal{C}(b)$; and on $\partial\mathcal{C}(a)\setminus\bigcup_b\mathcal{C}_{b}(a)$ by mapping to the basepoint. To build $F_a$ we must take the framed embedding of each moduli space $\cM(a,b)$ in turn, and `deform' the framed embedding to the framed embedding of $\cM(\bar{a},\bar{b})$. Note that we do not need each time slice of the deformation to look precisely like a framed embedding whose exterior can be collapsed (indeed in general it will not). To show continuity of $F_a$ we will only need that the \emph{track} of the deformation has a framing. Precisely, for each $a,b\in\ob(\conf)$ we will define a framed embedded cobordism between $\mathcal{C}_{b,1}$ and $\mathcal{C}_{\bar{b},1}$, giving a subset $\mathcal{C}_{b,W}\subset\E^\dd[|a|:|b|]\times[0,1]$ which in turn defines an embedding \[\begin{array}{l}[0,R]\times[-R,R]^{d_B}\times\dots\times[0,R]\times[-R,R]^{|b|-1}\times\{0\}\times\mathcal{C}_{b,W}\\ \qquad\qquad\qquad\qquad \times\{0\}\times[-\epsilon,\epsilon]^{|a|}\times\dots\times\{0\}\times [-\epsilon,\epsilon]^{d_{A-1}}\subset[0,1]\times\partial\mathcal{C}(a),\end{array}\]which is homeomorphic to $Z_{a,b}\times\mathcal{C}(b)$ (for some space $Z_{a,b}$, to be defined below). We can then define $F_a:[0,1]\times\partial\mathcal{C}(a)\to X^{|a|-1}$ as follows. On $Z_{a,b}\times\mathcal{C}(b)$, it is the projection to $\mathcal{C}(b)$; and on $[0,1]\times\partial\mathcal{C}(a)\setminus\bigcup_b\left(Z_{a,b}\times\mathcal{C}(b)\right)$, we map to the basepoint.

So we proceed to modify $\cM(a,b)$ in a series of case analyses based on (1), (2), (3), (4), of Definition \ref{def:extW}. Needless to say, the case of (5) requires no modification of $\cM(a,b)$. In this case, define a subspace $\mathcal{C}_{b,W}=\mathcal{C}_{b,1}\times[0,1]\subset\E^\dd[|a|:|b|]\times[0,1]$ and hence an embedding $Z_{a,b}\times\mathcal{C}(b)\subset \partial \mathcal{C}(a)$ where $Z_{a,b}$ is simply $\cM(a,b)\times[0,1]$.

For the first nontrivial case, set $a=x$ and $b=y$. Consider the embedding $\iota\times\id: M\times[0,1]\hookrightarrow \E^\dd[|x|:|y|]\times[0,1]$. Now use Lemma \ref{lem:smooth} and the framed cobordism $W$ to push $M$ out of the codimension 1 corner $\E^\dd[|x|:|y|]\times\{1\}$ and replace it with $M'$. This determines a subset $\mathcal{C}_{b,W}$ of $[-R,R]^{|y|}\times[0,R]\times\dots\times[0,R]\times[-R,R]^{|x|-1}\times[0,1]$. Moreover, this results in an embedding of $Z_{a,b}\times\mathcal{C}(y)$ in $[0,1]\times\partial\mathcal{C}(x)$ (here $Z_{a,b}:=Z$ refers to the result of applying Lemma \ref{lem:smooth}).

For the second case, set $a=x$, $b\neq y$ and $\cM(x,b)\neq\emptyset$. Then we have \[\cM(y,b)\times M\subset\partial\cM(x,b)\subset \E^\dd[|y|:|b|]\times\{0\}\times\E^\dd[|x|:|y|]\] and in the normal direction to this boundary we will take a $2\eta$ open collar neighbourhood. Consider that the product \[\cM(y,b)\times M\times[0,2\eta)\times[0,1]\subset\cM(x,b)\times[0,1]\]has a copy of $\cM(y,b)\times M$ in the corner $\left(\E^\dd[|y|:|b|]\times\{0\}\times\E^\dd[|x|:|y|]\right)\times\{0\}\times\{1\}$. Using Lemma \ref{lem:smooth} and the framed cobordism $\cM(y,b)\times W$, we push $\cM(y,b)\times M$ out of the corner and replace it with $\cM(y,b)\times M'$. Note that all embeddings were framed, so that resultant $Z_{a,b}$, is a framed embedding determining a subset $\mathcal{C}_{b,W}$ of $[-R,R]^{|b|}\times[0,R]\times\dots\times[0,R]\times[-R,R]^{|x|-1}\times[0,1]$ and hence an embedding of $Z_{a,b}\times\mathcal{C}(b)$ in $[0,1]\times\partial\mathcal{C}(x)$. Note that this agrees with the construction from the first case:\[\mathcal{C}_{y,W}\cap[-R,R]^{|y|}\times[0,R]\times\dots\times[0,R]\times[-R,R]^{|x|-1}\times[0,1]= \mathcal{C}_{b,W},\]as each case used Lemma \ref{lem:smooth}.

We must now look at cells $a\neq x$. There are two cases to consider. First suppose $a\neq x$, $b=y$ and that $\cM(a,y)\neq\emptyset$ and we wish to deform the embedding of $M\times\cM(a,x)$ appropriately. But this case proceeds identically to the case above where $a=x$, $b\neq y$ and $\cM(x,b)\neq\emptyset$, so we omit the details.

Finally, there is the case that $a\neq x$, $b\neq y$ and that $\cM(a,y)\neq\emptyset\neq\cM(x,b)$. This is treated by the same pushing out technique as before (but now we need to do it in codimension 3). Precisely, we must consider $\cM(y,b)\times M\times\cM(a,x)\subset \partial \cM(a,b)$. But observe there is a copy of $\cM(y,b)\times M\times\cM(a,x)$ in the codimension 3 corner $(0,0,1)\in[0,2\eta)\times[0,2\eta)\times[0,1]$ of the cylinder on the 2-way collar neighbourhood \[\cM(y,b)\times M\times\cM(a,x)\times[0,2\eta)\times[0,2\eta)\times[0,1]\subset \cM(a,b)\times[0,1].\]We push $\cM(y,b)\times M\times\cM(a,x)$ out of the corner using $\cM(y,b)\times W\times\cM(a,x)$ and we obtain $\mathcal{C}_{b,W}\cong Z_{a,b}\times\mathcal{C}(b)$ as in previous cases. This completes the case analysis.
\end{proof}

\begin{example}We illustrate a homotopy from the proof Theorem \ref{thm:whitney} in the case where $M, M'$ are 0--dimensional. Let $M$ be a point, $M'$ be 3 points and $W$ be the 2--component cobordism from $M$ to $M'$ as in Figure \ref{pic:cob}. We suppose $M$ is actually a corner of a larger space $N$, so that after the homotopy, $M'$ is in the corner of this larger space. This is shown in Figure \ref{pic:whit}, where we have drawn $N$ extending in the vertical axis, and drawn the framed cobordism $Z$ constructed in Theorem \ref{thm:whitney} by pushing $M$ out of the corner of $N\times\{1\}$ of $N\times[0,1]$ using $W$.

\begin{figure}
\begin{tikzpicture}
\draw (0,0) -- (5,0);
\draw (0,3) -- (5,3);
\draw (1,0) arc [radius=1, start angle=180, end angle= 0];
\draw (4,0) -- (4,3);
\draw [fill] (1,0) circle [radius=0.05];
\draw [fill] (3,0) circle [radius=0.05];
\draw [fill] (4,0) circle [radius=0.05];
\draw [fill] (4,3) circle [radius=0.05];
\node at (4.1,3.3) {M};
\node at (5.35,3.15) {$\E^\dd$};
\node at (5.35,0.15) {$\E^\dd$};
\node at (2.7,1.8) {$W$};
\draw [->] (2.9,1.7) -- (3.8,1.4);
\draw [->] (2.6,1.55) -- (2.5,1.1);
\node at (2.8,-0.5) {$M'$};
\draw [decorate,decoration={brace,amplitude=5pt,raise=8pt}] (0,0) -- (0,3);
\node [rotate=90] at (-0.9,1.5) {$\E^\dd\times[0,1]$};
\end{tikzpicture}
\caption{The cobordism $W$ between $M$ and $M'$}\label{pic:cob}
\end{figure}
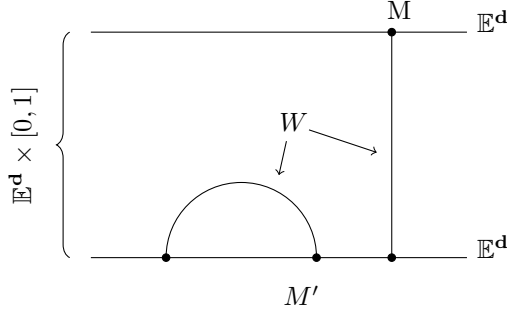

\begin{figure}[htb]
    \centering
    \includegraphics[width=0.8\textwidth]{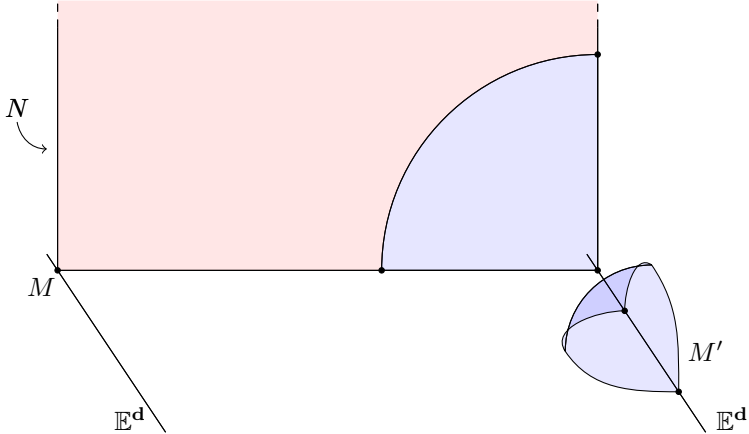}
\caption{The cobordism $Z$}\label{pic:whit}
  \end{figure}

\end{example}

\section{The disjoint union of three trefoils}
\label{sec:tref}

We make a sample calculation of a stable homotopy type using our framed flow category moves.  The example framed flow category $\cC_1$ that we shall start with is depicted in Figure \ref{fig:smashofMoore}.  We shall show explicitly that $\cC_1$ is equivalent via our flow category moves to the framed flow category $\cC_5$ depicted in Figure \ref{fig:smashofMooreIV}.  The form of $\cC_5$ corresponds to the Baues--Hennes classification of low homological width stable homotopy types and this is discussed further towards the end of the section.

We choose to consider the framed flow category $\cC_1$ since it arises in a particular context, namely as a framed flow category associated to the disjoint union of three trefoils by the techniques of \cite{JLS}.

%The objects are labelled by a shortening of the tuple at which they are based.
\begin{definition}
	\label{defn:cC1}
There are eight objects in the category $\cC_1$ depicted in Figure \ref{fig:smashofMoore}, each labelled by a 3--tuple.  Any non-empty $0$--dimensional moduli space consists of two points each, as shown.  %which are liftings of the corresponding points in the sock flow category.
A letter `p' indicates that the point is framed positively, while a letter `m' indicates a negative framing.
%Furthermore, the subscripts $0$ and $1$ indicate which point from the $1$--dimensional sock flow category is lifted.

The $1$--dimensional moduli spaces are then given by four intervals each, with all of them framed $0$.  %.  by the framing formula given in \cite[Definition 5.8]{JLS}.
Each such interval of course has two endpoints, and these are given as follows.

\begin{tikzpicture}[scale=0.8]
\node at (0,0) {$\cM(333,223)=$};

\node[label=below:$P_0\mathscr{P}_0$, label=above:{\color{blue}$233$}] (5_5_1)at (1.9,0) {};
\node[label=below:$\tilde{P}_0\mathscr{M}_0$, label=above:{\color{blue}$323$}] (5_5_2)at (3.7,0) {};

\draw[shorten >=-0.1cm,shorten <=-0.1cm,|-|] (5_5_1)--(5_5_2) node[sloped, above,pos=0.5, red]{$0$};

\node[label=below:$P_0\mathscr{P}_1$, label=above:{\color{blue}$233$}] (5_5_3)at (5.2,0) {};
\node[label=below:$\tilde{P}_1\mathscr{M}_0$, label=above:{\color{blue}$323$}] (5_5_4)at (7,0) {};

\draw[shorten >=-0.1cm,shorten <=-0.1cm,|-|] (5_5_3)--(5_5_4) node[sloped, above,pos=0.5, red]{$0$};

\node[label=below:$P_1\mathscr{P}_0$, label=above:{\color{blue}$233$}] (5_5_5)at (8.5,0) {};
\node[label=below:$\tilde{P}_0\mathscr{M}_1$, label=above:{\color{blue}$323$}] (5_5_6)at (10.3,0) {};

\draw[shorten >=-0.1cm,shorten <=-0.1cm,|-|] (5_5_5)--(5_5_6) node[sloped, above,pos=0.5, red]{$0$};

\node[label=below:$P_1\mathscr{P}_1$, label=above:{\color{blue}$233$}] (5_5_7)at (11.8,0) {};
\node[label=below:$\tilde{P}_1\mathscr{M}_1$, label=above:{\color{blue}$323$}] (5_5_8)at (13.6,0) {};

\draw[shorten >=-0.1cm,shorten <=-0.1cm,|-|] (5_5_7)--(5_5_8) node[sloped, above,pos=0.5, red]{$0$};

\node at (0,-2) {$\cM(333,232)=$};

\node[label=below:$M_0\mathscr{P}_0$, label=above:{\color{blue}$233$}] (5_5_1)at (1.9,-2) {};
\node[label=below:$\bar{P}_0\tilde{\mathscr{P}}_0$, label=above:{\color{blue}$332$}] (5_5_2)at (3.7,-2) {};

\draw[shorten >=-0.1cm,shorten <=-0.1cm,|-|] (5_5_1)--(5_5_2) node[sloped, above,pos=0.5, red]{$0$};

\node[label=below:$M_0\mathscr{P}_1$, label=above:{\color{blue}$233$}] (5_5_3)at (5.2,-2) {};
\node[label=below:$\bar{P}_1\tilde{\mathscr{P}}_0$, label=above:{\color{blue}$332$}] (5_5_4)at (7,-2) {};

\draw[shorten >=-0.1cm,shorten <=-0.1cm,|-|] (5_5_3)--(5_5_4) node[sloped, above,pos=0.5, red]{$0$};

\node[label=below:$M_1\mathscr{P}_0$, label=above:{\color{blue}$233$}] (5_5_5)at (8.5,-2) {};
\node[label=below:$\bar{P}_0\tilde{\mathscr{P}}_1$, label=above:{\color{blue}$332$}] (5_5_6)at (10.3,-2) {};

\draw[shorten >=-0.1cm,shorten <=-0.1cm,|-|] (5_5_5)--(5_5_6) node[sloped, above,pos=0.5, red]{$0$};

\node[label=below:$M_1\mathscr{P}_1$, label=above:{\color{blue}$233$}] (5_5_7)at (11.8,-2) {};
\node[label=below:$\bar{P}_1\tilde{\mathscr{P}}_1$, label=above:{\color{blue}$332$}] (5_5_8)at (13.6,-2) {};

\draw[shorten >=-0.1cm,shorten <=-0.1cm,|-|] (5_5_7)--(5_5_8) node[sloped, above,pos=0.5, red]{$0$};

\node at (0,-4) {$\cM(333,322)=$};

\node[label=below:$\tilde{M}_0\mathscr{M}_0$, label=above:{\color{blue}$323$}] (5_5_1)at (1.9,-4) {};
\node[label=below:$\bar{M}_0\tilde{\mathscr{P}}_0$, label=above:{\color{blue}$332$}] (5_5_2)at (3.7,-4) {};

\draw[shorten >=-0.1cm,shorten <=-0.1cm,|-|] (5_5_1)--(5_5_2) node[sloped, above,pos=0.5, red]{$0$};

\node[label=below:$\tilde{M}_0\mathscr{M}_1$, label=above:{\color{blue}$323$}] (5_5_3)at (5.2,-4) {};
\node[label=below:$\bar{M}_1\tilde{\mathscr{P}}_0$, label=above:{\color{blue}$332$}] (5_5_4)at (7,-4) {};

\draw[shorten >=-0.1cm,shorten <=-0.1cm,|-|] (5_5_3)--(5_5_4) node[sloped, above,pos=0.5, red]{$0$};

\node[label=below:$\tilde{M}_1\mathscr{M}_0$, label=above:{\color{blue}$323$}] (5_5_5)at (8.5,-4) {};
\node[label=below:$\bar{M}_0\tilde{\mathscr{P}}_1$, label=above:{\color{blue}$332$}] (5_5_6)at (10.3,-4) {};

\draw[shorten >=-0.1cm,shorten <=-0.1cm,|-|] (5_5_5)--(5_5_6) node[sloped, above,pos=0.5, red]{$0$};

\node[label=below:$\tilde{M}_1\mathscr{M}_1$, label=above:{\color{blue}$323$}] (5_5_7)at (11.8,-4) {};
\node[label=below:$\bar{M}_1\tilde{\mathscr{P}}_1$, label=above:{\color{blue}$332$}] (5_5_8)at (13.6,-4) {};

\draw[shorten >=-0.1cm,shorten <=-0.1cm,|-|] (5_5_7)--(5_5_8) node[sloped, above,pos=0.5, red]{$0$};

\node at (0,-6) {$\cM(233,222)=$};

\node[label=below:$p_0P_0$, label=above:{\color{blue}$223$}] (5_5_1)at (1.9,-6) {};
\node[label=below:$\tilde{p}_0M_0$, label=above:{\color{blue}$232$}] (5_5_2)at (3.7,-6) {};

\draw[shorten >=-0.1cm,shorten <=-0.1cm,|-|] (5_5_1)--(5_5_2) node[sloped, above,pos=0.5, red]{$0$};

\node[label=below:$p_0P_1$, label=above:{\color{blue}$223$}] (5_5_3)at (5.2,-6) {};
\node[label=below:$\tilde{p}_1M_0$, label=above:{\color{blue}$232$}] (5_5_4)at (7,-6) {};

\draw[shorten >=-0.1cm,shorten <=-0.1cm,|-|] (5_5_3)--(5_5_4) node[sloped, above,pos=0.5, red]{$0$};

\node[label=below:$p_1P_0$, label=above:{\color{blue}$223$}] (5_5_5)at (8.5,-6) {};
\node[label=below:$\tilde{p}_0M_1$, label=above:{\color{blue}$232$}] (5_5_6)at (10.3,-6) {};

\draw[shorten >=-0.1cm,shorten <=-0.1cm,|-|] (5_5_5)--(5_5_6) node[sloped, above,pos=0.5, red]{$0$};

\node[label=below:$p_1P_1$, label=above:{\color{blue}$223$}] (5_5_7)at (11.8,-6) {};
\node[label=below:$\tilde{p}_1M_1$, label=above:{\color{blue}$232$}] (5_5_8)at (13.6,-6) {};

\draw[shorten >=-0.1cm,shorten <=-0.1cm,|-|] (5_5_7)--(5_5_8) node[sloped, above,pos=0.5, red]{$0$};

\node at (0,-8) {$\cM(323,222)=$};

\node[label=below:$p_0\tilde{P}_0$, label=above:{\color{blue}$223$}] (5_5_1)at (1.9,-8) {};
\node[label=below:$\bar{p}_0\tilde{M}_0$, label=above:{\color{blue}$322$}] (5_5_2)at (3.7,-8) {};

\draw[shorten >=-0.1cm,shorten <=-0.1cm,|-|] (5_5_1)--(5_5_2) node[sloped, above,pos=0.5, red]{$0$};

\node[label=below:$p_0\tilde{P}_1$, label=above:{\color{blue}$223$}] (5_5_3)at (5.2,-8) {};
\node[label=below:$\bar{p}_1\tilde{M}_0$, label=above:{\color{blue}$322$}] (5_5_4)at (7,-8) {};

\draw[shorten >=-0.1cm,shorten <=-0.1cm,|-|] (5_5_3)--(5_5_4) node[sloped, above,pos=0.5, red]{$0$};

\node[label=below:$p_1\tilde{P}_0$, label=above:{\color{blue}$223$}] (5_5_5)at (8.5,-8) {};
\node[label=below:$\bar{p}_0\tilde{M}_1$, label=above:{\color{blue}$322$}] (5_5_6)at (10.3,-8) {};

\draw[shorten >=-0.1cm,shorten <=-0.1cm,|-|] (5_5_5)--(5_5_6) node[sloped, above,pos=0.5, red]{$0$};

\node[label=below:$p_1\tilde{P}_1$, label=above:{\color{blue}$223$}] (5_5_7)at (11.8,-8) {};
\node[label=below:$\bar{p}_1\tilde{M}_1$, label=above:{\color{blue}$322$}] (5_5_8)at (13.6,-8) {};

\draw[shorten >=-0.1cm,shorten <=-0.1cm,|-|] (5_5_7)--(5_5_8) node[sloped, above,pos=0.5, red]{$0$};

\end{tikzpicture}

\begin{tikzpicture}[scale=0.8]

\node at (0,-4) {$\cM(332,222)=$};

\node[label=below:$\tilde{p}_0\bar{P}_0$, label=above:{\color{blue}$232$}] (5_5_1)at (1.9,-4) {};
\node[label=below:$\bar{p}_0\bar{M}_0$, label=above:{\color{blue}$322$}] (5_5_2)at (3.7,-4) {};

\draw[shorten >=-0.1cm,shorten <=-0.1cm,|-|] (5_5_1)--(5_5_2) node[sloped, above,pos=0.5, red]{$0$};

\node[label=below:$\tilde{p}_0\bar{P}_1$, label=above:{\color{blue}$232$}] (5_5_3)at (5.2,-4) {};
\node[label=below:$\bar{p}_1\bar{M}_0$, label=above:{\color{blue}$322$}] (5_5_4)at (7,-4) {};

\draw[shorten >=-0.1cm,shorten <=-0.1cm,|-|] (5_5_3)--(5_5_4) node[sloped, above,pos=0.5, red]{$0$};

\node[label=below:$\tilde{p}_1\bar{P}_0$, label=above:{\color{blue}$232$}] (5_5_5)at (8.5,-4) {};
\node[label=below:$\bar{p}_0\bar{M}_1$, label=above:{\color{blue}$322$}] (5_5_6)at (10.3,-4) {};

\draw[shorten >=-0.1cm,shorten <=-0.1cm,|-|] (5_5_5)--(5_5_6) node[sloped, above,pos=0.5, red]{$0$};

\node[label=below:$\tilde{p}_1\bar{P}_1$, label=above:{\color{blue}$232$}] (5_5_7)at (11.8,-4) {};
\node[label=below:$\bar{p}_1\bar{M}_1$, label=above:{\color{blue}$322$}] (5_5_8)at (13.6,-4) {};

\draw[shorten >=-0.1cm,shorten <=-0.1cm,|-|] (5_5_7)--(5_5_8) node[sloped, above,pos=0.5, red]{$0$};

\end{tikzpicture}

The $2$--dimensional moduli space $\cM(333,222)$ consists of four hexagons.   It will follow from the Baues--Hennes classification \cite{MR1113684} that the associated stable homotopy type is determined by the action of $\Sq^1$ and $\Sq^2$.  These operations are in turn determined by the framed moduli spaces of dimensions $0$ and $1$, so we will not keep track of the $2$-dimensional moduli space.
\end{definition}

For those less interested in the Khovanov stable homotopy type, the next proposition may be skipped.  We focus on quantum degree $q=21$ of the Khovanov stable homotopy type of the disjoint union of three right-handed trefoils, since we know that there is a non-trivial $\Sq^3$ in this degree.  Indeed, the general formula of \cite{LawLipSar} confirms the existence of the smash of Moore spaces $$M(\Z/2\Z,2)\wedge M(\Z/2\Z,2) \wedge M(\Z/2\Z,2)$$ as a wedge summand.

\begin{proposition}
	\label{prop:eg_arises_from_tref}
Let $L$ be the disjoint union of three right-handed trefoils.  The technique of \cite{JLS} constructs a framed flow category $\mathscr{L}^{\mathrm{Kh}}(L)$.  %covering a $3$--dimensional sock flow category (see \cite{JLS} for definitions).
In quantum degree $q=21$, this framed flow category is the disjoint union of the framed flow category $\cC_1$ described in Definition \ref{defn:cC1} with some other framed flow category.
\end{proposition}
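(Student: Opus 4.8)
The plan is to use that the construction of \cite{JLS} is multiplicative under disjoint union, which reduces the statement to a computation with the right--handed trefoil in a single quantum degree. First I would record the multiplicativity: for links $L_1,L_2$ with chosen diagrams, $\mathscr{L}^{\mathrm{Kh}}(L_1\sqcup L_2)$ is the product framed flow category $\mathscr{L}^{\mathrm{Kh}}(L_1)\times\mathscr{L}^{\mathrm{Kh}}(L_2)$ — objects are pairs, gradings and quantum gradings add, and the moduli spaces are built from those of the factors — just as for the original Lipshitz--Sarkar construction (cf.\ \cite{LawLipSar}). Since the moduli spaces preserve the quantum grading, the quantum degree $q$ part of such a product is the disjoint union, over ordered tuples with $\sum q_i=q$, of the products of the quantum degree $q_i$ parts. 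Applying this twice to $L=T\sqcup T\sqcup T$, with $T$ the standard three--crossing diagram of the right--handed trefoil, gives, as framed flow categories,
\[
\mathscr{L}^{\mathrm{Kh}}(L)^{q=21}
=\coprod_{q_1+q_2+q_3=21}
\mathscr{L}^{\mathrm{Kh}}(T)^{q_1}\times\mathscr{L}^{\mathrm{Kh}}(T)^{q_2}\times\mathscr{L}^{\mathrm{Kh}}(T)^{q_3}.
\]

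Next I would read off $\mathscr{L}^{\mathrm{Kh}}(T)$ in each quantum degree, either quoting \cite{JLS} or computing from the cube of resolutions of $T$: the quantum degrees occurring are $1,3,5,7,9$; the degree $7$ piece $\mathscr{L}^{\mathrm{Kh}}(T)^{7}$ is the two--object framed flow category $\mathcal{T}$ with one object in grading $2$, one in grading $3$, and two equally--framed points between them (this is the flow category realizing a Moore space $M(\Z/2\Z,2)$); and the remaining degrees each contribute a single object (in grading $0$ for $q=1,3$, grading $2$ for $q=5$, grading $3$ for $q=9$). The only ordered triples from $\{1,3,5,7,9\}$ summing to $21$ are $(7,7,7)$, the three permutations of $(3,9,9)$, and the six permutations of $(5,7,9)$, so $(7,7,7)$ occurs exactly once. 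Hence it suffices to identify the single summand $\mathcal{T}\times\mathcal{T}\times\mathcal{T}$ with $\cC_1$; the remaining nine summands become the ``other framed flow category''.

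Finally I would compute $\mathcal{T}^{\times3}$ from the product construction and compare with Definition \ref{defn:cC1}: its objects are $\{2,3\}^{3}$ (labelled by the gradings of the three factors), its $0$--dimensional moduli spaces (one coordinate changing) are a single $\cM_\mathcal{T}$ times points, hence two equally--framed points; its $1$--dimensional moduli spaces (two coordinates changing) are two copies of $\cM_\mathcal{T}$ times an interval factor, hence four intervals; and its (untracked) $2$--dimensional moduli space is assembled from three copies of $\cM_\mathcal{T}$ and a $2$--dimensional polytope. The hard part — and the main obstacle — is matching the framings: one must pin the product--of--framed--flow--categories construction down precisely enough (the interval factors and their framings, the $2$--polytope, the coordinate orderings) that $\mathcal{T}^{\times3}$ is literally, not merely move--equivalently, equal to $\cC_1$, with every framing sign agreeing with Figure \ref{fig:smashofMoore}. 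With the standard framing of $\mathcal{T}$ and the coherent system of paths of \cite{JLSmorse,MR3252965} already in force throughout the paper, the interval factors receive the standard framing (so every $1$--dimensional moduli space is framed $0$) and the point signs propagate to the displayed $p$/$m$ pattern; the cleanest bookkeeping route is to verify $\mathcal{T}\times\mathcal{T}$ first, where the single interval framing is forced by the one coherent--path choice of \cite[Lemma 3.1]{MR3252965}, and then iterate. One further point needing care is to quote from \cite{JLS} the precise fact that $\mathscr{L}^{\mathrm{Kh}}(T)^{7}$ is already the minimal two--object model $\mathcal{T}$, so that the eight--object category $\cC_1$ is genuinely the $(7,7,7)$ summand rather than a reduction of something larger.
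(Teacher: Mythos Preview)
Your route is genuinely different from the paper's. The paper does not invoke multiplicativity of the construction under disjoint union at all; instead it works directly inside $\mathscr{L}^{\mathrm{Kh}}(L)$ for the nine--crossing diagram and argues by \emph{connectivity}. Concretely, it observes that in $q=21$ there is a unique object in homological degree $9$ (based at $(3,3,3)$, three circles all decorated $-$), lists the nine degree--$8$ objects and notes that only three of them have non-empty moduli space to the degree--$9$ object, and then checks that the eight objects so reached (in degrees $6$ through $9$) share no moduli spaces with anything else in $\mathscr{L}^{\mathrm{Kh}}(L)$. That is the whole argument: $\cC_1$ is simply the connected component of the degree--$9$ object, and the framing data are not re-derived but taken from the computation scheme of \cite[\S 4.3]{JLSmorse}.

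Your product approach is cleaner conceptually and explains \emph{why} the subcategory has the cube shape $\{2,3\}^{3}$, but it rests on two facts you have to import: that the specific construction of \cite{JLS} (not just the original Lipshitz--Sarkar one you cite via \cite{LawLipSar}) is literally multiplicative on disjoint unions, and that $\mathscr{L}^{\mathrm{Kh}}(T)^{q=7}$ is already the two--object model $\mathcal{T}$ rather than something larger. You flag the second of these, but the first is also not free: \cite{JLS} is a modified construction, and you would need to check that whatever simplifications it performs are compatible with the product. The paper's direct argument sidesteps both issues at the cost of being less structural. Either way, neither proof verifies the interval framings of $\cC_1$ from scratch; both ultimately defer that bookkeeping to the conventions and calculations in \cite{JLSmorse} and \cite{MR3252965}, so your worry about the ``hard part'' is no worse than what the paper itself does.
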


\begin{proof}
The following calculation is similar to \cite[\S 4.3]{JLSmorse}. In homological degree $9$ we get exactly one object which is based at $(3,3,3)$. The smoothing of this object consists of three circles, each of which is decorated with a $-$. The objects of homological degree $8$ are based at $(2,3,3)$, $(3,2,3)$ and $(3,3,2)$. Again the smoothings are three circles, and one of them is decorated with a $+$. We therefore get $9$ objects of degree $8$. Note however that only three of those have non-empty moduli space with the object of degree $9$, namely those where the $+$ corresponds to the position of the $2$ in the base triple.

We will now only consider those objects that have non-empty moduli spaces with the object based at $(3,3,3)$, as these objects will give rise exactly to the product of Moore spaces predicted above. We then get three objects of homological degree $7$, based at $(2,2,3)$, $(2,3,2)$ and $(3,2,2)$, where the circle corresponding to the $3$ is decorated $-$, and one object of homological degree $6$ based at $(2,2,2)$ with all three circles decorated $+$. It is easy to see that these objects do not share non-empty moduli spaces with other objects in $\mathscr{L}^{\mathrm{Kh}}(L)$.
\end{proof}

\begin{figure}[ht]
\begin{tikzpicture}[scale=0.85]
\node at (0,6) {$333$};
\node at (-2,4) {$233$};
\node at (0,4) {$323$};
\node at (2,4) {$332$};
\node at (-2,2) {$223$};
\node at (0,2) {$232$};
\node at (2,2) {$322$};
\node at (0,0) {$222$};

\draw [shorten >=0.3cm,shorten <=0.3cm,->] (0,6) -- node [above,sloped,scale=0.7] {$\mathscr{P}_0\mathscr{P}_1$} (-2,4);
\draw [shorten >=0.3cm,shorten <=0.3cm,->] (0,6) -- node [above,sloped,scale=0.7] {$\mathscr{M}_0\mathscr{M}_1$} (0,4);
\draw [shorten >=0.3cm,shorten <=0.3cm,->] (0,6) -- node [above,sloped,scale=0.7] {$\tilde{\mathscr{P}}_0\tilde{\mathscr{P}}_1$} (2,4);
\draw [shorten >=0.3cm,shorten <=0.3cm,->] (-2,4) -- node [above,sloped,scale=0.7] {$P_0P_1$} (-2,2);
\draw [shorten >=0.3cm,shorten <=0.3cm,->] (-2,4) -- node [above,sloped,scale=0.7,pos=0.25] {$M_0M_1$} (0,2);
\draw [shorten >=0.3cm,shorten <=0.3cm,->] (0,4) -- node [above,sloped,scale=0.7,pos=0.25] {$\tilde{P}_0\tilde{P}_1$} (-2,2);
\draw [shorten >=0.3cm,shorten <=0.3cm,->] (0,4) -- node [above,sloped,scale=0.7,pos=0.25] {$\tilde{M}_0\tilde{M}_1$} (2,2);
\draw [shorten >=0.3cm,shorten <=0.3cm,->] (2,4) -- node [above,sloped,scale=0.7,pos=0.25] {$\bar{P}_0\bar{P}_1$} (0,2);
\draw [shorten >=0.3cm,shorten <=0.3cm,->] (2,4) -- node [above,sloped,scale=0.7] {$\bar{M}_0\bar{M}_1$} (2,2);
\draw [shorten >=0.3cm,shorten <=0.3cm,->] (-2,2) -- node [above,sloped,scale=0.7] {$p_0p_1$} (0,0);
\draw [shorten >=0.3cm,shorten <=0.3cm,->] (0,2) -- node [above,sloped,scale=0.7] {$\tilde{p}_0\tilde{p}_1$} (0,0);
\draw [shorten >=0.3cm,shorten <=0.3cm,->] (2,2) -- node [above,sloped,scale=0.7] {$\bar{p}_0\bar{p}_1$} (0,0);
\end{tikzpicture}
\caption{A subcategory $\cC_1$ of $\mathscr{L}^{\mathrm{Kh}}(L)$.}
\label{fig:smashofMoore}
\end{figure}

We are now going to perform handle slides and Whitney tricks, simplifying the framed flow category $\cC_1$ in a sequence of four propositions until we arrive at the `Baues--Hennes' category $\cC_5$.

\begin{proposition}
	\label{prop:cC1_to_cC2}
	The framed flow category $\cC_1$ is move-equivalent to a framed flow category $\cC_2$, depicted in Figure \ref{fig:smashofMooreI}.  In this figure the $1$--dimensional moduli space denoted $\eta$ is a non-trivially framed circle.
\end{proposition}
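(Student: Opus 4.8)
The plan is to realize the passage from $\cC_1$ to $\cC_2$ as a single handle slide followed by an extended Whitney trick, using the explicit framing bookkeeping of Proposition \ref{prop:framechanges} and the cobordism surgery of Definition \ref{def:extW}. First I would identify which objects of $\cC_1$ play the roles of $x$ and $y$. Since the goal is to reduce four parallel length-one strands of moduli data to something involving a non-trivially framed circle $\eta$, the natural first move is a handle slide among the three objects of homological degree $8$ (those based at $233,323,332$): sliding one of them, say $\bullet_{323}$, over another, say $\bullet_{233}$. By the Lemma following Definition \ref{def:int}, the new moduli spaces $\cM(x',b')$ and $\cM(a',y')$ are disjoint unions of the old ones, and crucially $\cM(a',b')$ acquires a new piece $\cM(y,b)\times[0,1]\times\cM(a,x)$ whenever both $\cM(a,x)$ and $\cM(y,b)$ are non-empty. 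Tracing through Figure \ref{fig:smashofMoore}, the pair $a=333$, $b=223$ (and the analogous pairs into $232$) satisfy this, so after the slide $\cM(333,223)$ (etc.) picks up extra interval components glued in along their boundaries to the existing ones.

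Next I would apply Proposition \ref{prop:framechanges} to compute the framings of all the affected $0$- and $1$-dimensional moduli spaces after the slide. Case (3) of that proposition tells us the framings of the $\cM(x,b)$ pieces are unchanged; cases (1) and (2) tell us exactly how the framing values $fr(J')$ and $fr(I_{B,A})$ are determined by $fr(J)$, $\varepsilon_B$, and the choice of sign of the slide. The point of choosing the slide sign (a $(+)$- versus $(-)$-slide) is to arrange that, after combining the newly created intervals with the old $0$-framed intervals of $\cC_1$, the union of the relevant components in one of the $2$-dimensional moduli spaces organizes into a single circle carrying the non-trivial framing $fr=1$; this is the circle called $\eta$ in the statement. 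I would then invoke an extended Whitney trick (Theorem \ref{thm:whitney}): the newly glued pieces, together with some of the old intervals, form a manifold-with-boundary that is framed cobordant rel.\ boundary to the simpler picture in Figure \ref{fig:smashofMooreI} — essentially cancelling or amalgamating the redundant interval components via a framed cobordism $W$, and pushing out of corners as in Lemma \ref{lem:smooth} wherever the higher-codimension moduli spaces $\cM(333,222)$ are involved. By Theorems \ref{thm:handleslide} and \ref{thm:whitney} each of these operations preserves the stable homotopy type, so $\mathcal{X}(\cC_1)\simeq\mathcal{X}(\cC_2)$ and $\cC_1$ is move-equivalent to $\cC_2$.

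Two points require care. First, the framed cobordism used for the Whitney trick must be chosen compatibly across \emph{all} the moduli spaces simultaneously: modifying $\cM(a,y)$ forces a modification of $\cM(a,b)$ for every $b$ with $\cM(y,b)\neq\emptyset$, and these must fit together as a genuine (framed, neat) embedded cobordism relative to boundary, which is exactly what Definition \ref{def:extW} and the compatibility clause at the end of Lemma \ref{lem:smooth} are designed to guarantee — but one still has to check the concrete $W$ (for the class of the non-trivially framed circle in $\Omega^{fr}_1\cong\Z/2\Z$) bounds appropriately after enlarging $\dd$. Second, one must verify that the picture produced genuinely matches Figure \ref{fig:smashofMooreI}, i.e.\ that no unexpected extra components survive in $\cM(333,232)$, $\cM(333,322)$, or in the lower moduli spaces; this is the routine but lengthy part of the argument. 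The main obstacle, and the step I expect to absorb most of the work, is the careful sign/framing computation: getting the slide orientation right so that the four $0$-framed intervals plus the new intervals assemble into precisely one non-trivially framed $\eta$ rather than a trivially framed circle or a disjoint collection, since an incorrect choice changes the $\Sq^2$-action and hence the resulting stable homotopy type.
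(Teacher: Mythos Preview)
Your plan misidentifies both the slide and the mechanism by which the circle $\eta$ arises, and as written it does not reach $\cC_2$.

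First, the paper slides the degree-$7$ object $223$ over $232$, not a degree-$8$ object over another. If you slide $323$ over $233$ as you suggest, then $\cM(333',233')$ acquires a copy of $\cM(333,323)$ and so becomes four points, whereas in $\cC_2$ it is still two; you would need further moves to repair this, and you have not indicated what they are or checked that they do not disturb the rest of the picture. More importantly, you describe the new interval components produced by a handle slide as being ``glued in along their boundaries to the existing ones''. They are not: the Lemma after Definition \ref{def:int} gives $\cM(a',b')=\cM(a,b)\sqcup(\cM(y,b)\times[0,1]\times\cM(a,x))$ as a \emph{disjoint} union. No gluing occurs at this stage, and in particular no circle can appear from the slide alone.

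The mechanism you are missing is this: the slide of $223$ over $232$ creates new $0$-dimensional points (the $m_i$ in $\cM(\overline{223},\overline{222})$ and the $\hat{P}_i$ in $\cM(\overline{233},\overline{232})$) which pair off with existing points of opposite sign. Performing the ordinary $0$-dimensional Whitney trick on these pairs is what glues intervals end-to-end in the adjacent $1$-dimensional moduli spaces, via \cite[Propositions 3.3, 3.4]{JLSmorse}. Two rounds of such $0$-dimensional Whitney tricks are needed: the first (on $p_i,m_i$) glues pairs of intervals in $\cM(\overline{233},\overline{222})$ and $\cM(\overline{323},\overline{222})$ and produces framing value $1$; the second (on $\hat{P}_i,M_i$) closes the intervals of $\cM(233,222)$ into three circles, each with framing label $0$ (hence \emph{non}-trivially framed in the conventions of \cite{JLSmorse}). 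Only then does a $1$-dimensional extended Whitney trick reduce these three circles to the single $\eta$. Note also that $\eta$ lives in the $1$-dimensional moduli space $\cM(233,222)$, not in a $2$-dimensional one.
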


\begin{proof}
To begin, we slide $223$ over $232$ via a $(-)$--handle slide.  We will write an overline above all objects to indicate the resulting flow category, but these overlines will disappear again in time for the next move. The affected moduli spaces are
\[
 \cM(\overline{333},\overline{232})=\cM(333,232) \sqcup \cM(333,223)
\]
with no change in framings by Proposition \ref{prop:framechanges}.1(a). Furthermore, both $\cM(\overline{233},\overline{222})$ and $\cM(\overline{323},\overline{222})$ get four new intervals, each framed $0$ by Proposition \ref{prop:framechanges}.2(a). Note these new intervals correspond to $\cM(232,222)\times \cM(a,223)$ with $a=233$ or $a=323$. The points $\tilde{p}_0,\tilde{p}_1\in \cM(232,222)$ create new points $m_0,m_1\in \cM(\overline{223},\overline{222})$, $P_0,P_1\in \cM(233,223)$ create $\hat{P}_0,\hat{P}_1\in \cM(\overline{233},\overline{232})$, and $\tilde{P}_0,\tilde{P}_1\in \cM(323,223)$ create $\check{P}_0,\check{P}_1\in \cM(\overline{323},\overline{232})$.

The new intervals in $\cM(\overline{233},\overline{222})$ are given by

\begin{tikzpicture}
\node[label=below:$m_0P_0$, label=above:{\color{blue}$\overline{223}$}] (5_5_1)at (0,0) {};
\node[label=below:$\tilde{p}_0\hat{P}_0$, label=above:{\color{blue}$\overline{232}$}] (5_5_2)at (1.6,0) {};

\draw[shorten >=-0.1cm,shorten <=-0.1cm,|-|] (5_5_1)--(5_5_2) node[sloped, above,pos=0.5, red]{$0$};

\node[label=below:$m_0P_1$, label=above:{\color{blue}$\overline{223}$}] (5_5_3)at (3.3,0) {};
\node[label=below:$\tilde{p}_0\hat{P}_1$, label=above:{\color{blue}$\overline{232}$}] (5_5_4)at (4.9,0) {};

\draw[shorten >=-0.1cm,shorten <=-0.1cm,|-|] (5_5_3)--(5_5_4) node[sloped, above,pos=0.5, red]{$0$};

\node[label=below:$m_1P_0$, label=above:{\color{blue}$\overline{223}$}] (5_5_5)at (6.6,0) {};
\node[label=below:$\tilde{p}_1\hat{P}_0$, label=above:{\color{blue}$\overline{232}$}] (5_5_6)at (8.2,0) {};

\draw[shorten >=-0.1cm,shorten <=-0.1cm,|-|] (5_5_5)--(5_5_6) node[sloped, above,pos=0.5, red]{$0$};

\node[label=below:$m_1P_1$, label=above:{\color{blue}$\overline{223}$}] (5_5_7)at (9.9,0) {};
\node[label=below:$\tilde{p}_1\hat{P}_1$, label=above:{\color{blue}$\overline{232}$}] (5_5_8)at (11.5,0) {};

\draw[shorten >=-0.1cm,shorten <=-0.1cm,|-|] (5_5_7)--(5_5_8) node[sloped, above,pos=0.5, red]{$0$};
\end{tikzpicture}

and similarly in $\cM(\overline{323},\overline{222})$ they are given by

\begin{tikzpicture}
\node[label=below:$m_0\tilde{P}_0$, label=above:{\color{blue}$\overline{223}$}] (5_5_1)at (0,0) {};
\node[label=below:$\tilde{p}_0\check{P}_0$, label=above:{\color{blue}$\overline{232}$}] (5_5_2)at (1.6,0) {};

\draw[shorten >=-0.1cm,shorten <=-0.1cm,|-|] (5_5_1)--(5_5_2) node[sloped, above,pos=0.5, red]{$0$};

\node[label=below:$m_0\tilde{P}_1$, label=above:{\color{blue}$\overline{223}$}] (5_5_3)at (3.3,0) {};
\node[label=below:$\tilde{p}_0\check{P}_1$, label=above:{\color{blue}$\overline{232}$}] (5_5_4)at (4.9,0) {};

\draw[shorten >=-0.1cm,shorten <=-0.1cm,|-|] (5_5_3)--(5_5_4) node[sloped, above,pos=0.5, red]{$0$};

\node[label=below:$m_1\tilde{P}_0$, label=above:{\color{blue}$\overline{223}$}] (5_5_5)at (6.6,0) {};
\node[label=below:$\tilde{p}_1\check{P}_0$, label=above:{\color{blue}$\overline{232}$}] (5_5_6)at (8.2,0) {};

\draw[shorten >=-0.1cm,shorten <=-0.1cm,|-|] (5_5_5)--(5_5_6) node[sloped, above,pos=0.5, red]{$0$};

\node[label=below:$m_1\tilde{P}_1$, label=above:{\color{blue}$\overline{223}$}] (5_5_7)at (9.9,0) {};
\node[label=below:$\tilde{p}_1\check{P}_1$, label=above:{\color{blue}$\overline{232}$}] (5_5_8)at (11.5,0) {};

\draw[shorten >=-0.1cm,shorten <=-0.1cm,|-|] (5_5_7)--(5_5_8) node[sloped, above,pos=0.5, red]{$0$};
\end{tikzpicture}

We now perform the Whitney trick in $\cM(\overline{223},\overline{222})$ with $p_0,m_0$ and with $p_1,m_1$.  The result is that in $\cM(\overline{233},\overline{222})$ and $\cM(\overline{323},\overline{222})$ intervals are glued together. For example, the endpoint $p_0P_0$ in the old $\cM(233,222)$ is identified with the endpoint $m_0P_0$ in one of the new intervals. In fact, in each case two intervals are glued together to form a new interval. Furthermore, by \cite[Proposition 3.3]{JLSmorse}, the framing value of each new interval is $1$. We then get

\begin{tikzpicture}[scale=0.8]
\node at (0,0) {$\cM(\overline{233},\overline{222})=$};

\node[label=below:$\tilde{p}_0\hat{P}_0$, label=above:{\color{blue}$\overline{232}$}] (5_5_1)at (1.9,0) {};
\node[label=below:$\tilde{p}_0M_0$, label=above:{\color{blue}$\overline{232}$}] (5_5_2)at (3.7,0) {};

\draw[shorten >=-0.1cm,shorten <=-0.1cm,|-|] (5_5_1)--(5_5_2) node[sloped, above,pos=0.5, red]{$1$};

\node[label=below:$\tilde{p}_0\hat{P}_1$, label=above:{\color{blue}$\overline{232}$}] (5_5_3)at (5.2,0) {};
\node[label=below:$\tilde{p}_1M_0$, label=above:{\color{blue}$\overline{232}$}] (5_5_4)at (7,0) {};

\draw[shorten >=-0.1cm,shorten <=-0.1cm,|-|] (5_5_3)--(5_5_4) node[sloped, above,pos=0.5, red]{$1$};

\node[label=below:$\tilde{p}_1\hat{P}_0$, label=above:{\color{blue}$\overline{232}$}] (5_5_5)at (8.5,0) {};
\node[label=below:$\tilde{p}_0M_1$, label=above:{\color{blue}$\overline{232}$}] (5_5_6)at (10.3,0) {};

\draw[shorten >=-0.1cm,shorten <=-0.1cm,|-|] (5_5_5)--(5_5_6) node[sloped, above,pos=0.5, red]{$1$};

\node[label=below:$\tilde{p}_1\hat{P}_1$, label=above:{\color{blue}$\overline{232}$}] (5_5_7)at (11.8,0) {};
\node[label=below:$\tilde{p}_1M_1$, label=above:{\color{blue}$\overline{232}$}] (5_5_8)at (13.6,0) {};

\draw[shorten >=-0.1cm,shorten <=-0.1cm,|-|] (5_5_7)--(5_5_8) node[sloped, above,pos=0.5, red]{$1$};

\node at (0,-2) {$\cM(\overline{323},\overline{222})=$};

\node[label=below:$\tilde{p}_0\check{P}_0$, label=above:{\color{blue}$\overline{232}$}] (5_5_1)at (1.9,-2) {};
\node[label=below:$\bar{p}_0\tilde{M}_0$, label=above:{\color{blue}$\overline{322}$}] (5_5_2)at (3.7,-2) {};

\draw[shorten >=-0.1cm,shorten <=-0.1cm,|-|] (5_5_1)--(5_5_2) node[sloped, above,pos=0.5, red]{$1$};

\node[label=below:$\tilde{p}_0\check{P}_1$, label=above:{\color{blue}$\overline{232}$}] (5_5_3)at (5.2,-2) {};
\node[label=below:$\bar{p}_1\tilde{M}_0$, label=above:{\color{blue}$\overline{322}$}] (5_5_4)at (7,-2) {};

\draw[shorten >=-0.1cm,shorten <=-0.1cm,|-|] (5_5_3)--(5_5_4) node[sloped, above,pos=0.5, red]{$1$};

\node[label=below:$\tilde{p}_1\check{P}_0$, label=above:{\color{blue}$\overline{232}$}] (5_5_5)at (8.5,-2) {};
\node[label=below:$\bar{p}_0\tilde{M}_1$, label=above:{\color{blue}$\overline{322}$}] (5_5_6)at (10.3,-2) {};

\draw[shorten >=-0.1cm,shorten <=-0.1cm,|-|] (5_5_5)--(5_5_6) node[sloped, above,pos=0.5, red]{$1$};

\node[label=below:$\tilde{p}_1\check{P}_1$, label=above:{\color{blue}$\overline{232}$}] (5_5_7)at (11.8,-2) {};
\node[label=below:$\bar{p}_1\tilde{M}_1$, label=above:{\color{blue}$\overline{322}$}] (5_5_8)at (13.6,-2) {};

\draw[shorten >=-0.1cm,shorten <=-0.1cm,|-|] (5_5_7)--(5_5_8) node[sloped, above,pos=0.5, red]{$1$};

\end{tikzpicture}

In the next step we again perform the Whitney trick, this time using $\hat{P}_0,M_0$ and $\hat{P}_1,M_1$ in $\cM(\overline{233},\overline{232})$.  Note that we now remove the overline from the objects. The effect on $\cM(333,232)$ is that the eight intervals turn into four similarly to the case above. More precisely, we get

\begin{tikzpicture}[scale=0.8]
\node at (0,-2) {$\cM(333,232)=$};

\node[label=below:$\check{P}_0\mathscr{M}_0$, label=above:{\color{blue}$323$}] (5_5_1)at (1.9,-2) {};
\node[label=below:$\bar{P}_0\tilde{\mathscr{P}}_0$, label=above:{\color{blue}$332$}] (5_5_2)at (3.7,-2) {};

\draw[shorten >=-0.1cm,shorten <=-0.1cm,|-|] (5_5_1)--(5_5_2) node[sloped, above,pos=0.5, red]{$1$};

\node[label=below:$\check{P}_1\mathscr{M}_0$, label=above:{\color{blue}$323$}] (5_5_3)at (5.2,-2) {};
\node[label=below:$\bar{P}_1\tilde{\mathscr{P}}_0$, label=above:{\color{blue}$332$}] (5_5_4)at (7,-2) {};

\draw[shorten >=-0.1cm,shorten <=-0.1cm,|-|] (5_5_3)--(5_5_4) node[sloped, above,pos=0.5, red]{$1$};

\node[label=below:$\check{P}_0\mathscr{M}_1$, label=above:{\color{blue}$323$}] (5_5_5)at (8.5,-2) {};
\node[label=below:$\bar{P}_0\tilde{\mathscr{P}}_1$, label=above:{\color{blue}$332$}] (5_5_6)at (10.3,-2) {};

\draw[shorten >=-0.1cm,shorten <=-0.1cm,|-|] (5_5_5)--(5_5_6) node[sloped, above,pos=0.5, red]{$1$};

\node[label=below:$\check{P}_1\mathscr{M}_1$, label=above:{\color{blue}$323$}] (5_5_7)at (11.8,-2) {};
\node[label=below:$\bar{P}_1\tilde{\mathscr{P}}_1$, label=above:{\color{blue}$332$}] (5_5_8)at (13.6,-2) {};

\draw[shorten >=-0.1cm,shorten <=-0.1cm,|-|] (5_5_7)--(5_5_8) node[sloped, above,pos=0.5, red]{$1$};
\end{tikzpicture}

The moduli space $\cM(233,222)$ turns into a closed manifold. In fact, the outer intervals result in one circle each, and the inner two intervals are glued together along their endpoints to form a single circle. By \cite[Proposition 3.4]{JLSmorse} all circles are labelled with $0$, which means that each circle is \emph{non-trivially} framed (compare \cite{JLSmorse} for framing conventions). Using the extended Whitney trick, we can reduce this to one non-trivially framed circle, which we denote by $\cM(233,222)=\eta$.

The result is the framed flow category $\cC_2$ depicted in Figure \ref{fig:smashofMooreI}.
\end{proof}

\begin{proposition}
	\label{prop:cC2_to_cC3}
	The framed flow category $\cC_2$ is move equivalent to the framed flow category $\cC_3$ depicted in Figure \ref{fig:smashofMooreII}.  In Figure \ref{fig:smashofMooreII} we denote non-trivially framed circles either by $\xi$ or $\eta$.
\end{proposition}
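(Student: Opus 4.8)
The plan is to exploit the symmetry of $\cC_1$ under permuting the three trefoil summands, and to repeat — with respect to a different pair of degree-$7$ objects — essentially the sequence of moves that took $\cC_1$ to $\cC_2$. Concretely, I would begin with a single $(-)$--handle slide, the natural choice being to slide $322$ over $232$ in $\cC_2$, mirroring the slide of $223$ over $232$ performed in the proof of Proposition \ref{prop:cC1_to_cC2}. By Theorem \ref{thm:handleslide} this preserves the stable homotopy type, and because the slide is a $(-)$--slide, Proposition \ref{prop:framechanges}.1(a) guarantees that the framings of those $1$--dimensional moduli spaces which merely acquire extra components remain unchanged.

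I would then compute the effect of the slide with Proposition \ref{prop:framechanges}: the moduli space $\cM(333,232)$, which consists of four intervals framed $1$ in $\cC_2$, absorbs a disjoint copy of $\cM(333,322)$ and gains four further intervals framed $0$; each degree-$8$ object $a$ with $\cM(a,322)\neq\emptyset$ contributes, via Proposition \ref{prop:framechanges}.2(a), four new intervals framed $0$ to $\cM(\overline{a},\overline{222})$; and new points appear in the degree-$7$-to-degree-$6$ moduli spaces. Next I would apply a sequence of Whitney tricks (Theorem \ref{thm:whitney}): first to the new oppositely framed points in $\cM(\overline{322},\overline{222})$, which — by \cite[Proposition 3.3]{JLSmorse}, exactly as in the proof of Proposition \ref{prop:cC1_to_cC2} — glues the new intervals in pairs into single intervals framed $1$; then in $\cM(\overline{332},\overline{232})$, which halves the interval count there and turns $\cM(\overline{332},\overline{222})$ into a disjoint union of circles, each labelled $0$ and hence non-trivially framed by \cite[Proposition 3.4]{JLSmorse}. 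An extended Whitney trick then collapses those circles to a single non-trivially framed circle; writing $\xi$ for it, retaining $\eta$ for the circle inherited from $\cC_2$, and relabelling the objects to drop the overlines, one obtains the category of Figure \ref{fig:smashofMooreII}.

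The main obstacle is the framing bookkeeping. Unlike the step $\cC_1\to\cC_2$, here the $1$--dimensional moduli spaces of $\cC_2$ already carry non-trivial framing data — the circle $\eta$ and several intervals framed $1$ — output by the previous round of moves, so each Whitney-trick gluing and each use of Lemma \ref{lem:arc} must be checked against the sign conventions of \cite[Propositions 3.3, 3.4, 3.6.6, 3.7.6]{JLSmorse}; in particular one must verify that the new circles really are non-trivially framed, and that the framing-$1$ intervals surviving in $\cM(333,232)$ and in the other top moduli spaces are produced with the parities needed to match $\cC_3$. The remaining verifications — that the objects and the $0$--dimensional moduli spaces transform as stated, and that no unintended interactions with the rest of the flow category are created — are routine given the explicit description of $\cC_2$ and Proposition \ref{prop:eg_arises_from_tref}.
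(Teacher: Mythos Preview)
Your approach is the same as the paper's: a single $(-)$--handle slide of $322$ over $232$, followed by Whitney tricks. However, your bookkeeping has two concrete gaps that prevent you from actually landing on $\cC_3$.

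First, you only perform Whitney tricks in $\cM(\overline{322},\overline{222})$ and $\cM(\overline{332},\overline{232})$. But the slide also introduces oppositely framed points $\check{M}_0,\check{M}_1$ in $\cM(\overline{323},\overline{232})$ (coming from $\cM(323,322)$), and you must cancel these against $\check{P}_0,\check{P}_1$ as well. Without this second pair of $0$--dimensional Whitney tricks, the eight intervals in $\cM(\overline{333},\overline{232})$ never close up into circles (half of their endpoints factor through $\overline{323}$), and $\cM(\overline{323},\overline{222})$ remains a collection of intervals rather than becoming the non-trivially framed circle it must be in $\cC_3$.

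Second, you misplace $\xi$. In $\cC_3$ the single non-trivially framed circle called $\xi$ lives in $\cM(333,232)$, not in $\cM(332,222)$. In fact all three of $\cM(333,232)$, $\cM(323,222)$ and $\cM(332,222)$ close up (each into three circles, reducing to one non-trivially framed circle by extended Whitney tricks); the first is labelled $\xi$ and the latter two are additional $\eta$'s, so that $\cC_3$ carries \emph{four} non-trivially framed circles in total, not the two your argument produces. Once you insert the missing Whitney tricks in $\cM(\overline{323},\overline{232})$ and track the resulting closures in all three affected $1$--dimensional moduli spaces, the argument goes through exactly as in the proof of Proposition~\ref{prop:cC1_to_cC2}.
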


\begin{proof}
Starting from this category, we slide $322$ over $232$ with a $(-)$--handle slide. This introduces extra points $M_0,M_1$ in $\cM(\overline{332},\overline{232})$ and $\check{M}_0,\check{M}_1$ in $\cM(\overline{323},\overline{232})$, as well as $m_0,m_1$ in $\cM(\overline{322},\overline{222})$. Similar to the last slide in Proposition \ref{prop:cC1_to_cC2}, the moduli spaces $\cM(\overline{333},\overline{232})$, $\cM(\overline{323},\overline{222})$ and $\cM(\overline{332},\overline{222})$ each acquire four new intervals.

\begin{figure}[ht]
\begin{tikzpicture}[scale=0.85]
\node at (0,6) {$333$};
\node at (-2,4) {$233$};
\node at (0,4) {$323$};
\node at (2,4) {$332$};
\node at (-2,2) {$223$};
\node at (0,2) {$232$};
\node at (2,2) {$322$};
\node at (0,0) {$222$};

\draw [shorten >=0.3cm,shorten <=0.3cm,->] (0,6) -- node [above,sloped,scale=0.7] {$\mathscr{P}_0\mathscr{P}_1$} (-2,4);
\draw [shorten >=0.3cm,shorten <=0.3cm,->] (0,6) -- node [above,sloped,scale=0.7] {$\mathscr{M}_0\mathscr{M}_1$} (0,4);
\draw [shorten >=0.3cm,shorten <=0.3cm,->] (0,6) -- node [above,sloped,scale=0.7] {$\tilde{\mathscr{P}}_0\tilde{\mathscr{P}}_1$} (2,4);
\draw [shorten >=0.3cm,shorten <=0.3cm,->] (-2,4) -- node [above,sloped,scale=0.7] {$P_0P_1$} (-2,2);
\draw [shorten >=0.3cm,shorten <=0.3cm,->] (0,4) -- node [above,sloped,scale=0.7] {$\check{P}_0\check{P}_1$} (0,2);
\draw [shorten >=0.3cm,shorten <=0.3cm,->] (0,4) -- node [above,sloped,scale=0.7,pos=0.25] {$\tilde{P}_0\tilde{P}_1$} (-2,2);
\draw [shorten >=0.3cm,shorten <=0.3cm,->] (0,4) -- node [above,sloped,scale=0.7,pos=0.25] {$\tilde{M}_0\tilde{M}_1$} (2,2);
\draw [shorten >=0.3cm,shorten <=0.3cm,->] (2,4) -- node [above,sloped,scale=0.7,pos=0.25] {$\bar{P}_0\bar{P}_1$} (0,2);
\draw [shorten >=0.3cm,shorten <=0.3cm,->] (2,4) -- node [above,sloped,scale=0.7] {$\bar{M}_0\bar{M}_1$} (2,2);
\draw [shorten >=0.3cm,shorten <=0.3cm,->] (-2,4) -- node [above,sloped,scale=0.7] {$\eta$} (0,0);
\draw [shorten >=0.3cm,shorten <=0.3cm,->] (0,2) -- node [above,sloped,scale=0.7] {$\tilde{p}_0\tilde{p}_1$} (0,0);
\draw [shorten >=0.3cm,shorten <=0.3cm,->] (2,2) -- node [above,sloped,scale=0.7] {$\bar{p}_0\bar{p}_1$} (0,0);
\end{tikzpicture}
\caption{The framed flow category $\cC_2$.}
\label{fig:smashofMooreI}
\end{figure}

Performing all obvious extended Whitney tricks (0--dimensional and 1--dimensional) now leads to the flow category indicated in Figure \ref{fig:smashofMooreII}. One easily checks that $\cM(333,232)$, $\cM(323,222)$ and $\cM(332,222)$ all turn into three circles, and after extended Whitney tricks all contain one non-trivially framed circle. We denote the non-trivially framed circle in $\cM(333,232)$ by $\xi$, mainly to follow the conventions in \cite{MR1113684}.
\end{proof}

\begin{figure}[ht]
\begin{tikzpicture}[scale=0.8]
\node at (0,6) {$333$};
\node at (-2,4) {$233$};
\node at (0,4) {$323$};
\node at (2,4) {$332$};
\node at (-2,2) {$223$};
\node at (0,2) {$232$};
\node at (2,2) {$322$};
\node at (0,0) {$222$};

\draw [shorten >=0.3cm,shorten <=0.3cm,->] (0,6) -- node [above,sloped,scale=0.7] {$\mathscr{P}_0\mathscr{P}_1$} (-2,4);
\draw [shorten >=0.3cm,shorten <=0.3cm,->] (0,6) -- node [above,sloped,scale=0.7] {$\mathscr{M}_0\mathscr{M}_1$} (0,4);
\draw [shorten >=0.3cm,shorten <=0.3cm,->] (0,6) -- node [above,sloped,scale=0.7] {$\tilde{\mathscr{P}}_0\tilde{\mathscr{P}}_1$} (2,4);
\draw [shorten >=0.3cm,shorten <=0.3cm,->] (-2,4) -- node [above,sloped,scale=0.7] {$P_0P_1$} (-2,2);
\draw [shorten >=0.3cm,shorten <=0.3cm,->] (0,6) to [bend right=20] node [above,sloped,scale=0.7,pos=0.75] {$\xi$} (0,2);
\draw [shorten >=0.3cm,shorten <=0.3cm,->] (0,4) to [bend left=20] node [above,sloped,scale=0.7,pos=0.25] {$\eta$} (0,0);
\draw [shorten >=0.3cm,shorten <=0.3cm,->] (0,4) -- node [above,sloped,scale=0.7,pos=0.35] {$\tilde{P}_0\tilde{P}_1$} (-2,2);
\draw [shorten >=0.3cm,shorten <=0.3cm,->] (0,4) -- node [above,sloped,scale=0.7,pos=0.35] {$\tilde{M}_0\tilde{M}_1$} (2,2);
%\draw [shorten >=0.3cm,shorten <=0.3cm,->] (2,4) -- node [above,sloped,scale=0.7,pos=0.25] {$\bar{P}_0\bar{P}_1$} (0,2);
\draw [shorten >=0.3cm,shorten <=0.3cm,->] (2,4) -- node [above,sloped,scale=0.7] {$\bar{M}_0\bar{M}_1$} (2,2);
\draw [shorten >=0.3cm,shorten <=0.3cm,->] (-2,4) -- node [above,sloped,scale=0.7] {$\eta$} (0,0);
\draw [shorten >=0.3cm,shorten <=0.3cm,->] (2,4) -- node [above,sloped,scale=0.7] {$\eta$} (0,0);
\draw [shorten >=0.3cm,shorten <=0.3cm,->] (0,2) -- node [below,sloped,scale=0.7] {$\tilde{p}_0\tilde{p}_1$} (0,0);
%\draw [shorten >=0.3cm,shorten <=0.3cm,->] (2,2) -- node [above,sloped,scale=0.7] {$\bar{p}_0\bar{p}_1$} (0,0);
\end{tikzpicture}
\caption{The framed flow category $\cC_3$.}
\label{fig:smashofMooreII}
\end{figure}

\begin{proposition}
	\label{prop:cC3_to_cC4}
	The framed flow category $\cC_3$ is move equivalent to the framed flow category $\cC_4$ depicted in Figure \ref{fig:smashofMooreIII}.
\end{proposition}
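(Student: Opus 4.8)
The plan is to continue exactly as in the proofs of Propositions \ref{prop:cC1_to_cC2} and \ref{prop:cC2_to_cC3}: perform one or more further handle slides in $\cC_3$, read off the effect on every moduli space and its framing from Proposition \ref{prop:framechanges}, and then tidy up the resulting moduli spaces with a sequence of extended Whitney tricks. Since handle slides preserve the stable homotopy type by Theorem \ref{thm:handleslide} and extended Whitney tricks preserve it by Theorem \ref{thm:whitney}, the category $\cC_4$ so obtained is automatically move-equivalent to $\cC_3$. At the level of the associated cochain complex, the purpose of these slides is to realise the integral row/column operations that push the differential towards its primary Smith normal form; geometrically they are slides of one object over another object of the same grading.

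Concretely, I would first choose a handle slide among the grading-$8$ objects $233,323,332$ of $\cC_3$ --- a $(\pm)$--slide of one over another --- selected so as to clear one of the two ``doubled'' entries of the differential (the contributions into $\cM(\cdot,223)$ and into $\cM(\cdot,322)$), decorating the new category with overlines that will disappear by the end of the argument. Proposition \ref{prop:framechanges} then dictates the changes: the relevant $0$--dimensional moduli spaces gain new points, and the $1$--dimensional moduli spaces attached to the slid object (in particular the non-trivially framed circle $\eta$, and possibly $\xi$) are modified with framing values governed by parts (1)--(3) of that proposition. Exactly as in the earlier propositions, the new $0$--dimensional points pair off with existing points of opposite framing sign, so an extended Whitney trick in the appropriate $\cM(\overline{\cdot},\overline{\cdot})$ removes them; when this amalgamates a pair of $1$--dimensional intervals, the framing value of the result is computed from \cite[Proposition 3.3]{JLSmorse}, and when closed components appear one uses \cite[Proposition 3.4]{JLSmorse} to see that they carry label $0$, hence are non-trivially framed, after which one more extended Whitney trick collapses each such family of circles to a single non-trivially framed circle. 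Comparing the outcome with Figure \ref{fig:smashofMooreIII} then identifies the result as $\cC_4$.

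The main obstacle is the framing bookkeeping rather than the topology: the manifold-level moves (slides, interval amalgamations, amalgamation of framed circles) are routine, but one must keep careful track --- using Proposition \ref{prop:framechanges} together with \cite[Propositions 3.3, 3.4, 3.6.6, 3.7.6]{JLSmorse} and the coherent system of paths of \cite{MR3252965} --- of whether each surviving $1$--dimensional interval finishes with framing $0$ or $1$ and whether each surviving circle is trivially or non-trivially framed. This is precisely the data recording the $\Sq^1$-- and $\Sq^2$--action, and hence (via the Baues--Hennes classification \cite{MR1113684}) the underlying stable homotopy type, so it is where essentially all of the work of the proof lies. A secondary, purely combinatorial point is to perform the slides in the particular order that lands on the representative $\cC_4$ drawn in Figure \ref{fig:smashofMooreIII}: a different admissible ordering would still reach \emph{some} category of the same Smith-normal shape, but reproducing the figure on the nose means following through the sequence indicated there.
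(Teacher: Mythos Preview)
Your outline is correct and is exactly the paper's approach: handle slides among the grading-$8$ objects followed by $0$-- and $1$--dimensional extended Whitney tricks, with framings tracked via Proposition~\ref{prop:framechanges} and \cite[Propositions~3.3,~3.4]{JLSmorse}. The only thing you leave unspecified that the paper pins down is the actual pair of moves: one performs two $(-)$--handle slides, sliding $323$ over $233$ and $323$ over $332$, which (after the obvious Whitney tricks) empties $\cM(333,233)$, $\cM(333,332)$, $\cM(323,223)$, $\cM(323,322)$, turns $\cM(333,223)$ and $\cM(333,322)$ into single non-trivially framed circles $\xi$, and reduces the three circles in $\cM(323,222)$ to one $\eta$.
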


\begin{proof}
Starting from Figure \ref{fig:smashofMooreII} we can perform two $(-)$--handle slides, sliding $323$ over both $233$ and $332$. The moduli spaces $\cM(\overline{333},\overline{223})$ and $\cM(\overline{333},\overline{322})$ each consist of eight intervals, and $\cM(\overline{323},\overline{222})$ contains exactly three non-trivially framed circles, which are together Whitney trick equivalent to a single non-trivially framed circle. 
Note that in $\cM(\overline{333},\overline{322})$ the framing of the four new intervals is different from the framing of the new intervals in $\cM(\overline{333},\overline{223})$, but after performing the obvious Whitney tricks we get the flow category $\cC_4$ depicted in Figure \ref{fig:smashofMooreIII}.
\end{proof}

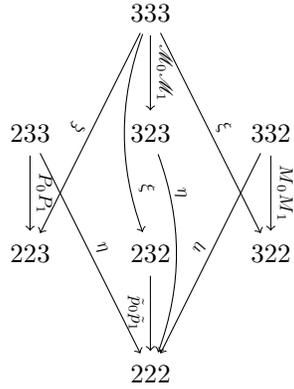
\begin{figure}[ht]
\begin{tikzpicture}[scale=0.8]
\node at (0,6) {$333$};
\node at (-2,4) {$233$};
\node at (0,4) {$323$};
\node at (2,4) {$332$};
\node at (-2,2) {$223$};
\node at (0,2) {$232$};
\node at (2,2) {$322$};
\node at (0,0) {$222$};

%\draw [shorten >=0.3cm,shorten <=0.3cm,->] (0,6) -- node [above,sloped,scale=0.7] {$\mathscr{P}_0\mathscr{P}_1$} (-2,4);
\draw [shorten >=0.3cm,shorten <=0.3cm,->] (0,6) -- node [above,sloped,scale=0.7] {$\mathscr{M}_0\mathscr{M}_1$} (0,4);
%\draw [shorten >=0.3cm,shorten <=0.3cm,->] (0,6) -- node [above,sloped,scale=0.7] {$\tilde{\mathscr{P}}_0\tilde{\mathscr{P}}_1$} (2,4);
\draw [shorten >=0.3cm,shorten <=0.3cm,->] (-2,4) -- node [above,sloped,scale=0.7] {$P_0P_1$} (-2,2);
\draw [shorten >=0.3cm,shorten <=0.3cm,->] (0,6) to [bend right=20] node [above,sloped,scale=0.7,pos=0.75] {$\xi$} (0,2);
\draw [shorten >=0.3cm,shorten <=0.3cm,->] (0,4) to [bend left=20] node [above,sloped,scale=0.7,pos=0.25] {$\eta$} (0,0);
\draw [shorten >=0.3cm,shorten <=0.3cm,->] (0,6) -- node [above,sloped,scale=0.7] {$\xi$} (-2,2);
\draw [shorten >=0.3cm,shorten <=0.3cm,->] (0,6) -- node [above,sloped,scale=0.7] {$\xi$} (2,2);
%\draw [shorten >=0.3cm,shorten <=0.3cm,->] (0,4) -- node [above,sloped,scale=0.7,pos=0.35] {$\tilde{P}_0\tilde{P}_1$} (-2,2);
%\draw [shorten >=0.3cm,shorten <=0.3cm,->] (0,4) -- node [above,sloped,scale=0.7,pos=0.35] {$\tilde{M}_0\tilde{M}_1$} (2,2);
%\draw [shorten >=0.3cm,shorten <=0.3cm,->] (2,4) -- node [above,sloped,scale=0.7,pos=0.25] {$\bar{P}_0\bar{P}_1$} (0,2);
\draw [shorten >=0.3cm,shorten <=0.3cm,->] (2,4) -- node [above,sloped,scale=0.7] {$\bar{M}_0\bar{M}_1$} (2,2);
\draw [shorten >=0.3cm,shorten <=0.3cm,->] (-2,4) -- node [above,sloped,scale=0.7] {$\eta$} (0,0);
\draw [shorten >=0.3cm,shorten <=0.3cm,->] (2,4) -- node [above,sloped,scale=0.7] {$\eta$} (0,0);
\draw [shorten >=0.3cm,shorten <=0.3cm,->] (0,2) -- node [below,sloped,scale=0.7] {$\tilde{p}_0\tilde{p}_1$} (0,0);
%\draw [shorten >=0.3cm,shorten <=0.3cm,->] (2,2) -- node [above,sloped,scale=0.7] {$\bar{p}_0\bar{p}_1$} (0,0);
\end{tikzpicture}
\caption{The framed flow category $\cC_4$.  Note that the associated cochain complex is in Smith normal form (see Section \ref{sec:smith}).}
\label{fig:smashofMooreIII}
\end{figure}

\begin{proposition}
	\label{prop:cC4_to_cC5}
	The framed flow category $\cC_4$ is move equivalent to the framed flow category $\cC_5$ depicted in Figure \ref{fig:smashofMooreIV}.
\end{proposition}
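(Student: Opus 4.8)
The plan is to proceed exactly as in the proofs of Propositions~\ref{prop:cC1_to_cC2}--\ref{prop:cC3_to_cC4}: starting from the category $\cC_4$ of Figure~\ref{fig:smashofMooreIII} we carry out a short sequence of handle slides (Theorem~\ref{thm:handleslide}) interleaved with extended Whitney tricks (Theorem~\ref{thm:whitney}), keeping careful track of every moduli space and of its framing, until the data agree with the category $\cC_5$ of Figure~\ref{fig:smashofMooreIV}. The cochain complex associated to $\cC_4$ is already in primary Smith normal form (see the caption of Figure~\ref{fig:smashofMooreIII} and Section~\ref{sec:smith}), so the $0$--dimensional part of the data -- the four moduli spaces $\cM(333,323)$, $\cM(233,223)$, $\cM(332,322)$, $\cM(232,222)$, each consisting of two points -- is already as simple as it can be, and essentially all of the work is the normalisation of the six non-trivially framed circles $\cM(333,232)=\cM(333,223)=\cM(333,322)=\xi$ and $\cM(323,222)=\cM(233,222)=\cM(332,222)=\eta$ into the configuration prescribed by the Baues--Hennes normal form \cite{MR1113684}.

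The tool for collecting circles is the handle slide. Sliding one object over another of the same grading replaces a pair of moduli spaces $\cM(a,u)$, $\cM(a,v)$ by a single moduli space $\cM(a,v')=\cM(a,v)\sqcup\cM(a,u)$, while by the description of $\conf_S$ in Section~\ref{sec:handleslide} the only other change is the appearance of terms of the form $\cM(v,b)\times[0,1]\times\cM(a,u)$, which for the slides we use are empty, because the relevant composite moduli spaces in $\cC_4$ vanish. Applied to the grading $7$ objects $223$, $232$, $322$ this lets us bring the surplus $\xi$--circles into a common moduli space; an extended Whitney trick then replaces the resulting disjoint union of circles by its class modulo $2$ in $\Omega_1^{fr}\cong\Z/2\Z$, exactly as $\cM(233,222)$ was reduced to $\eta$ in the proof of Proposition~\ref{prop:cC1_to_cC2} (using, as in the remark in Section~\ref{sec:whitney}, that $\xi\sqcup\xi$ bounds a framed cobordism rel.\ its empty boundary whereas $\xi$ does not). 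A short sequence of such slide-then-trick steps collapses the three $\xi$--circles to a single one and, performing the analogous moves on the grading $8$ objects $233$, $323$, $332$, collapses the three $\eta$--circles to a single one; the surviving circles are forced, because by the Baues--Hennes classification they must detect the nonzero action of $\Sq^2$ (and $\Sq^1$) on the $\F_2$--cohomology of $M(\Z/2\Z,2)^{\wedge 3}$, and they can be identified with those of Figure~\ref{fig:smashofMooreIV}. Throughout, the cochain complex stays in primary Smith normal form up to the evident relabelling of generators.

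The step I expect to be the main obstacle is the framing bookkeeping for the $1$--dimensional moduli spaces through each handle slide. Proposition~\ref{prop:framechanges} records how the framing value $fr$ of a transported interval component changes -- a $(+)$--slide inserts an extra reflection in one coordinate and toggles $fr$, a $(-)$--slide preserves it -- and the same reflection mechanism (compare Proposition~\ref{prop:immerse}) governs the framed cobordism class of a transported circle component. Consequently the choice of a $(+)$-- or $(-)$--slide at each stage is not free: it is pinned down by the requirement that the two circles to be cancelled end up with equal framings, so that their disjoint union is genuinely framed nullcobordant rel.\ boundary, and that the circle that survives carries the framing drawn in Figure~\ref{fig:smashofMooreIV}. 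Verifying this, together with the parallel check at each stage that the extended Whitney trick is being applied to a moduli space whose framed cobordism class rel.\ boundary really does contain the claimed model manifold -- which rests on the coherent-system-of-paths conventions of \cite{MR3252965} and \cite[\S3]{JLSmorse} used in Lemma~\ref{lem:arc} -- is a finite, routine computation entirely parallel to those already done in Propositions~\ref{prop:cC1_to_cC2}--\ref{prop:cC3_to_cC4}, but it is where the most care is needed in order to land exactly on $\cC_5$.
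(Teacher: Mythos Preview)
Your proposal is correct and follows essentially the same route as the paper: the paper's proof simply names the four specific handle slides --- $332$ over $323$, $233$ over $323$, then $232$ over $223$ and over $322$ --- followed by extended Whitney tricks on the resulting $1$--dimensional moduli spaces, which is exactly the ``slide among same-grading objects to bring two copies of $\xi$ (resp.\ $\eta$) together, then cancel'' procedure you outline. Your observation that the interval-type correction terms $\cM(y,b)\times[0,1]\times\cM(a,x)$ vanish for these particular slides (because $\cM(333,233)=\cM(333,332)=\cM(223,222)=\cM(322,222)=\emptyset$ in $\cC_4$) is the key point that makes the computation as short as the paper's two sentences suggest.
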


\begin{proof}
Finally, we slide $332$ over $323$, $233$ over $323$, and then $232$ over $223$ and $322$. Again, using the extended Whitney trick on the 1-dimensional moduli spaces leads to the flow category $\cC_5$ depicted in Figure \ref{fig:smashofMooreIV}.
\end{proof}

\begin{figure}[ht]
\begin{tikzpicture}[scale=0.8]
\node at (0,6) {$333$};
\node at (-2,4) {$233$};
\node at (0,4) {$323$};
\node at (2,4) {$332$};
\node at (-2,2) {$223$};
\node at (0,2) {$232$};
\node at (2,2) {$322$};
\node at (0,0) {$222$};

%\draw [shorten >=0.3cm,shorten <=0.3cm,->] (0,6) -- node [above,sloped,scale=0.7] {$\mathscr{P}_0\mathscr{P}_1$} (-2,4);
\draw [shorten >=0.3cm,shorten <=0.3cm,->] (0,6) -- node [above,sloped,scale=0.7] {$\mathscr{M}_0\mathscr{M}_1$} (0,4);
%\draw [shorten >=0.3cm,shorten <=0.3cm,->] (0,6) -- node [above,sloped,scale=0.7] {$\tilde{\mathscr{P}}_0\tilde{\mathscr{P}}_1$} (2,4);
\draw [shorten >=0.3cm,shorten <=0.3cm,->] (-2,4) -- node [above,sloped,scale=0.7] {$P_0P_1$} (-2,2);
\draw [shorten >=0.3cm,shorten <=0.3cm,->] (0,6) to [bend right=20] node [above,sloped,scale=0.7,pos=0.75] {$\xi$} (0,2);
\draw [shorten >=0.3cm,shorten <=0.3cm,->] (0,4) to [bend left=20] node [above,sloped,scale=0.7,pos=0.25] {$\eta$} (0,0);
%\draw [shorten >=0.3cm,shorten <=0.3cm,->] (0,6) -- node [above,sloped,scale=0.7] {$\eta$} (-2,2);
%\draw [shorten >=0.3cm,shorten <=0.3cm,->] (0,6) -- node [above,sloped,scale=0.7] {$\eta$} (2,2);
%\draw [shorten >=0.3cm,shorten <=0.3cm,->] (0,4) -- node [above,sloped,scale=0.7,pos=0.35] {$\tilde{P}_0\tilde{P}_1$} (-2,2);
%\draw [shorten >=0.3cm,shorten <=0.3cm,->] (0,4) -- node [above,sloped,scale=0.7,pos=0.35] {$\tilde{M}_0\tilde{M}_1$} (2,2);
%\draw [shorten >=0.3cm,shorten <=0.3cm,->] (2,4) -- node [above,sloped,scale=0.7,pos=0.25] {$\bar{P}_0\bar{P}_1$} (0,2);
\draw [shorten >=0.3cm,shorten <=0.3cm,->] (2,4) -- node [above,sloped,scale=0.7] {$\bar{M}_0\bar{M}_1$} (2,2);
%\draw [shorten >=0.3cm,shorten <=0.3cm,->] (-2,4) -- node [above,sloped,scale=0.7] {$\eta$} (0,0);
%\draw [shorten >=0.3cm,shorten <=0.3cm,->] (2,4) -- node [above,sloped,scale=0.7] {$\eta$} (0,0);
\draw [shorten >=0.3cm,shorten <=0.3cm,->] (0,2) -- node [below,sloped,scale=0.7] {$\tilde{p}_0\tilde{p}_1$} (0,0);
%\draw [shorten >=0.3cm,shorten <=0.3cm,->] (2,2) -- node [above,sloped,scale=0.7] {$\bar{p}_0\bar{p}_1$} (0,0);
\end{tikzpicture}
\caption{The flow category $\cC_5$.  The middle summand corresponds to a \emph{special cyclic word} in the sense of Baues-Hennes \cite{MR1113684}.}
\label{fig:smashofMooreIV}
\end{figure}
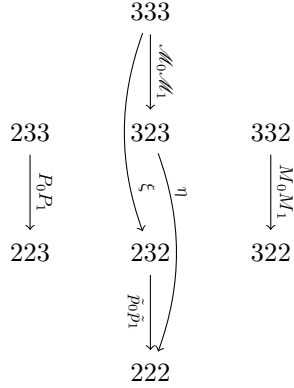

The flow category $\cC_5$ clearly has two wedge-summands $M(\Z/2\Z,7)$, and the remaining four objects form a stable space with the property that $\Sq^2\Sq^1=\Sq^1\Sq^2$ is non-trivial. It follows from the Decomposition Theorem of \cite[Theorem 3.9]{MR1113684} that the space corresponding to this category is the space called $X(\xi^2\eta_2,\id)$, where $\id\colon V \to V$ is the identity of the $1$--dimensional $\Z/2\Z$--vector space $V$.

We note that $(\xi^2\eta_2,\id)$ is a \emph{special cyclic word} in the sense of \cite{MR1113684}\footnote{In \cite{MR1113684}, the notation used is actually $\xi^1\eta_1$, while in \cite{MR1361886} it is $\xi^2\eta_2$.  We use the notation of \cite{MR1361886}.}, and the only indecomposable space in their list with $\Sq^2\Sq^1=\Sq^1\Sq^2$ non-trivial.

\section{A space level Smith normal form and further reductions}
\label{sec:smith}

It is discussed in \cite[Lemmas 3.25, 3.26]{MR3230817} that the stable homotopy type of $(\conf, \iota, \phi)$ is not affected by an isotopy of the framing $(\iota,\phi)$, or by increasing $\dd$ to $\dd'\geq \dd$. In addition to these basic modifications, we say the following are permissible \emph{flow category moves}:
\begin{enumerate}
\item Handle cancellation
\item Extended Whitney trick
\end{enumerate}(Note the handle slide of Subsection \ref{subsec:sliding} is based on handle cancellation, so does not need to be included as an extra flow category move.)

\begin{definition}$(\conf, \iota, \phi)$ and $(\conf', \iota',\phi')$ are \emph{directly move equivalent} if $(\conf', \iota',\phi')$ is the effect of a flow category move on $(\conf, \iota, \phi)$, or vice versa. We say $(\conf, \iota, \phi)$ and $(\conf', \iota',\phi')$ are \emph{move equivalent} if they are related by a finite sequence of directly move equivalent framed flow categories. Clearly, move equivalence is an equivalence relation on the set of framed flow categories.
\end{definition}

We now consider a general method to simplify a framed flow category within its move equivalence class.

Recall that a finite chain complex $C_*$ of finitely generated projective modules over a principal ideal domain $A$ can always be put into \emph{Smith normal form}. That is, there is a basis $C_r\cong U_r\oplus V_r\oplus W_r$ such that the differentials $d_r:C_{r+1}\to C_r$ vanish on $V_r\oplus W_r$ and the matrix of $d_r|_{U_r}$ has the form\[\left(\begin{matrix}0&D&0\end{matrix}\right)^t,\]where $D$ is injective and diagonal with $D_{ii}|D_{i+1\,i+1}$. Writing $m(r)=\text{rk}_AV_r$, $n(r)=\text{rk}_A W_r$, the Smith normal form basis presents the homology as \[H_r(C;A)\cong A/D_{11}A\oplus\dots\oplus A/D_{m(r)m(r)}A\oplus A^{n(r)}.\]Note that some diagonal entries may be equal to 1.

If we are moreover allowed to add or remove cancelling $A$--module generators in adjacent homological degrees (which will correspond to elementary expansions or contractions of the matrices of $d_r$), the Smith normal form can be changed so that the diagonal entries of $D$ are all prime powers. Elementary expansions and contractions result in chain homotopy equivalences, so the homology is not affected. In this changed form, the basis presents the unique primary decomposition of the homology modules. Call this modified type of basis the \emph{primary Smith normal form}.

Recall that any framed flow category $(\conf,\iota,\phi)$ determines a based chain complex $(C_*,d)$. The basis of $C_r$ is given by the objects $x$ of $\conf$ with $|x|=r$. If $|x|=r$ and $|y|=r-1$ then the $(x,y)$ entry in the matrix of $d_{r-1}$ is given by the signed count of the points in $\cM(x,y)$.

\begin{theorem}\label{thm:SNF}Any framed flow category $(\conf,\iota,\phi)$ is move equivalent to some framed flow category whose based chain complex $(C_*,d)$  is in primary Smith normal form and such that the number of points in any 0--dimensional moduli space is exactly the corresponding entry in the matrix of the differential $d$.
\end{theorem}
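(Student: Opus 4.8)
The plan is to first bring the associated based chain complex $(C_*,d)$ into primary Smith normal form by a sequence of handle slides and handle (de)cancellations, and then to clean up the $0$--dimensional moduli spaces with the extended Whitney trick so that each one has exactly as many points as the corresponding entry of $d$.

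The first task is to record the effect of the moves on $(C_*,d)$. From the description of $\conf_S$ in Subsection \ref{subsec:sliding} one sees that a handle slide of $x$ over $y$, with $|x|=|y|$, leaves the based chain complex isomorphic, the isomorphism being the transvection $x\mapsto x\pm y$ of the basis of $C_{|x|}$ (the sign depending on whether the slide is $(+)$ or $(-)$): the only $0$--dimensional moduli spaces altered are $\cM(x',b')=\cM(x,b)\sqcup\cM(y,b)$ and $\cM(a',y')=\cM(a,x)\sqcup\cM(a,y)$. A handle cancellation of a pair $f,e$ with $\cM(f,e)$ a single point realizes an elementary contraction of $(C_*,d)$, and the reverse move (introduce $e,f$ with $\cM(f,e)$ a point and no other new nonempty moduli spaces) realizes an elementary expansion. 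Since the transvections generate $SL_n(\Z)$, and a finite complex of finitely generated free abelian groups splits up to chain isomorphism as a sum of pieces $\Z$ and $\Z\xrightarrow{\,n\,}\Z$, a finite sequence of handle slides puts $(C_*,d)$ into Smith normal form; a finite sequence of expansions then splits each block $\Z\xrightarrow{\,n\,}\Z$ into blocks $\Z\xrightarrow{\,p^a\,}\Z$ with $p^a$ a prime power (adjoin cancelling generators to reach the diagonal form $(n,1,\dots,1)$, then change basis, using that $(n,1,\dots,1)$ and $(p_1^{a_1},\dots,p_k^{a_k})$ have equal Smith normal form); and contractions remove the remaining $\Z\xrightarrow{\,1\,}\Z$ blocks — each such contraction being a handle cancellation, which we enable by first applying an extended Whitney trick to reduce the relevant ($0$--dimensional, signed count $\pm 1$) moduli space to a single point. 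The residual $\det=-1$ ambiguity coming from only having $SL_n(\Z)$ does not matter for primary Smith normal form and is absorbed below. At the end of this process the framed flow category is move equivalent to one whose based chain complex is in primary Smith normal form; in particular each matrix entry of $d$ is $0$ or a positive prime power.

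For the cleanup, fix objects $x,y$ with $|x|=|y|+1$. Then $\cM(x,y)$ is a framed $0$--manifold with empty boundary whose signed point count is the entry $d_{xy}$, so it is framed cobordant rel.\ boundary to a set $M'$ of exactly $|d_{xy}|$ points of the sign of $d_{xy}$; such a cobordism is a union of arcs pairing off the surplus points, and after enlarging $\dd$ it embeds and frames as required to apply Theorem \ref{thm:whitney}. Replacing $\cM(x,y)$ by $M'$ via the extended Whitney trick does not change $(C_*,d)$: by the case analysis of Definition \ref{def:extW}, the only other moduli spaces modified are the $\cM(\bar a,\bar y)$, $\cM(\bar x,\bar b)$ and $\cM(\bar a,\bar b)$ with $\cM(a,x)$ or $\cM(y,b)$ nonempty, and when $|x|=|y|+1$ a quick dimension count shows every one of these has dimension at least $1$, while $\cM(\bar x,\bar y)=M'$ retains the signed count $d_{xy}$. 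Applying this in turn to every $0$--dimensional moduli space with more points than $|d_{xy}|$ produces the desired framed flow category.

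The step I expect to be the main obstacle is this last one: one must verify carefully that the extended Whitney trick applied to a $0$--dimensional moduli space leaves every other $0$--dimensional moduli space, and the signed count of the modified one, untouched, so that the geometric cleanup does not undo the primary Smith normal form obtained algebraically; this is exactly the dimension count just mentioned, together with the fact that framed cobordism of closed $0$--manifolds is detected by the signed count. A minor point is to confirm that the changes of basis needed in the algebraic step really are effected by handle slides: transvections only yield $SL_n(\Z)$, so one must observe that the leftover sign ambiguity can be pushed onto the signs of the points of the final $0$--dimensional moduli spaces and never obstructs reaching primary Smith normal form.
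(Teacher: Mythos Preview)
Your proposal is correct and uses the same three ingredients as the paper: handle slides realise the elementary row/column operations, handle (de)cancellations realise elementary expansions and contractions, and the extended Whitney trick reduces each $0$--dimensional moduli space to points of a single sign. The only real difference is the order in which you apply them. The paper first applies the Whitney trick so that every $0$--dimensional moduli space already consists of same-sign points, and only then performs the handle slides needed for Smith normal form; you instead carry out the algebraic reduction first and postpone the Whitney-trick cleanup to the very end, together with the dimension-count argument that this cleanup cannot disturb any other $0$--dimensional moduli space. Your ordering is arguably the cleaner one, since a handle slide replaces $\cM(x',b')$ by $\cM(x,b)\sqcup\cM(y,b)$ and so can reintroduce mixed-sign $0$--dimensional moduli spaces, a point the paper's write-up does not explicitly revisit. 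Your comment about $SL_n(\Z)$ versus $GL_n(\Z)$ is also a detail the paper leaves implicit; note that it can be handled directly by introducing a cancelling pair with a single \emph{negatively} framed point and using that extra $-1$ entry to flip signs before contracting it away.
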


\begin{proof}Suppose $(\conf,\iota,\phi)$ is a framed flow category. Any 0--dimensional moduli space $\cM(x,y)$ consists of a certain number of positively framed points and a certain number of negatively framed points. By pairing a positive point with a negative point we may cancel them against each other using an extended Whitney trick. This will not affect any other 0--dimensional moduli spaces or the other components of $\cM(x,y)$. By repeatedly doing this, we may assume that every non-empty 0--dimensional moduli space consists entirely of positively framed points or entirely of negatively framed points. Write this as a positive or negative integer $n_{x,y}$, and if $\cM(x,y)$ is empty, set $n_{x,y}=0$. The chain complex of the framed flow category is then $C_*$, where $C_n$ is freely generated by the objects $x$ in grading level $n$ and $d_{n-1}x=\Sigma_{|y|={n-1}}n_{x,y}y$.

We wish to first put $C_*$ into Smith normal form. To do so we will show that certain basis changes for the $C_*$ can be realised via flow category moves. If $|x|=n$ then write $r_x$ for the row vector corresponding to $x$ in the matrix of $d_n$, and $c_x$ for the column vector in $d_{n-1}$. Now if $|x|=|y|$, then the reader may check that if we $(\pm)$--slide $x$ over $y$ this does not affect $r_x$ or $c_y$, but has the effect $r_y\mapsto r_y\mp r_x$ and $c_x\mapsto c_x\pm c_y$. Set $k=\min\{|x|\,|\,x\in\ob(\conf)\}$. We may now perform the row and column operations required to put $d_k$ into Smith normal form. As a result, $C_k\cong V_k\oplus W_k$ and $C_k\cong U_k\oplus V_k\oplus W_k$ as required. Next, observe that $\text{im} (d_{k+1})=V_{k+1}\subset \ker d_k$, so that performing the handle slides required to put $d_{k+1}$ into Smith normal form will not destroy the Smith normal form of $d_k$ (as there are no 0--dimensional moduli spaces between $V_{k+1}$ and $C_k$). We may now repeat this process on $d_r$, for increasing $r$, so that the whole of $C_*$ is in Smith normal form.

To modify the flow category so that the chain complex is in primary Smith normal form, we only need to know that for each $r$, we can make elementary matrix expansions or contractions of $d_r$, using flow category moves. But to make a matrix expansion, we simply introduce a pair of objects $x,y$ with $|x|=r+1$ and $|y|=r$ with $\cM(x,y)$ a single positively framed point. Clearly this has the required effect on the matrix of $d_r$, and $x$ can be cancelled against $y$ via handle cancellation, so introducing these points is a permissible flow category move. To make an elementary matrix contraction we perform a handle cancellation on the objects corresponding to the cancelling chain complex generators.
\end{proof}

Here is an easy corollary:

\begin{corollary}\label{cor:moore}Suppose $(\conf_1,\iota_1,\phi_1)$ and $(\conf_2,\iota_2,\phi_2)$ are framed flow categories such that there is a homotopy equivalence $\mathcal{X}(\conf_1)\simeq \mathcal{X}(\conf_2)$, and that for some $n\in\Z$ and all $k>0$ the reduced homology $\widetilde{H}_*(\conf_1;\Z/k\Z)$ is only supported in degrees $n$ and $n+1$ (in particular this means the flow categories have the stable homotopy type of a wedge of Moore spaces). Then $(\conf_1,\iota_1,\phi_1)$ and $(\conf_2,\iota_2,\phi_2)$ are move equivalent.
\end{corollary}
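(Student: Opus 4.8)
The strategy is to push both categories into the canonical form supplied by Theorem~\ref{thm:SNF} and then observe that, under the homology hypothesis, nothing of the framed flow category survives beyond its based chain complex. I would first upgrade the hypothesis to a statement about integral homology. Since $\mathcal{X}(\conf_1)$ is a finite spectrum, all of its homology groups are finitely generated, and since $\widetilde H_*(\conf_1;\Z/k\Z)$ vanishes outside degrees $n,n+1$ for every $k>0$, the universal coefficient theorem forces $\widetilde H_*(\conf_1;\Z)$ to vanish outside degrees $n,n+1$ (a nonzero finitely generated group in another degree would survive modulo some $k$) and forces $\widetilde H_{n+1}(\conf_1;\Z)$ to be free (torsion there would contribute a nonzero $\mathrm{Tor}$ summand to $\widetilde H_{n+2}(\conf_1;\Z/k\Z)$). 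As $\mathcal{X}(\conf_1)\simeq\mathcal{X}(\conf_2)$ the same holds for $\conf_2$. Translating to the based chain complex $(C_*,d)$ of a framed flow category, which computes the cohomology of the realisation (so torsion sits one degree above the associated free class), these statements say that $H_*(C_*(\conf_1))$ is concentrated in two consecutive degrees $m,m+1$, with $H_{m+1}$ free and the torsion carried by $H_m$.

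Next, apply Theorem~\ref{thm:SNF} to obtain move equivalences $(\conf_i,\iota_i,\phi_i)\sim(\conf_i',\iota_i',\phi_i')$ for $i=1,2$, where each $C_*(\conf_i')$ is in primary Smith normal form and the $0$-dimensional moduli spaces have point-counts equal to the entries of the differential. A complex in primary Smith normal form is a direct sum of single free generators $\Z$ and two-term pieces $\Z\xrightarrow{p^k}\Z$ with $p^k$ a prime power; there are no unit entries, hence no acyclic summands. A free generator contributes homology in its own grading, and a piece $\Z\xrightarrow{p^k}\Z$ in gradings $(j+1,j)$ contributes $\Z/p^k\Z$ in grading $j$. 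Since the homology is concentrated in gradings $m,m+1$ with $H_{m+1}$ free, every free generator must lie in grading $m$ or $m+1$, and every torsion piece must straddle gradings $(m+1,m)$ — a torsion piece in gradings $(m+2,m+1)$ would deposit torsion into $H_{m+1}$, which is excluded. Hence every object of $\conf_1'$, and likewise of $\conf_2'$, has grading $m$ or $m+1$.

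Here is the key point: a framed flow category whose objects occupy only two consecutive gradings carries no data beyond its $0$-dimensional moduli spaces. Indeed every $\cM(x,y)$ with $|x|-|y|\ge 2$ is empty, so there are no positive-dimensional moduli spaces and all composition maps are vacuous, while a $0$-dimensional framed moduli space is, up to isotopy of the framed embedding and enlargement of $\dd$, nothing more than a finite collection of signed points, determined by the number of positively and of negatively framed points it contains. By the second conclusion of Theorem~\ref{thm:SNF} these point-counts-with-signs are precisely the entries of $d$. So $\conf_1'$ is determined, up to renaming of objects and the basic modifications of isotopy of the framing and enlargement of $\dd$, by the based chain complex $C_*(\conf_1')$ alone, and likewise $\conf_2'$ by $C_*(\conf_2')$. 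Since move equivalence preserves the stable homotopy type and $\mathcal{X}(\conf_1)\simeq\mathcal{X}(\conf_2)$, the two chain complexes have isomorphic homology, so by uniqueness of the primary decomposition they are isomorphic as based complexes. Therefore $\conf_1'$ and $\conf_2'$ agree up to renaming, isotopy and enlargement of $\dd$ — each of which preserves the move-equivalence class — giving $\conf_1\sim\conf_1'\sim\conf_2'\sim\conf_2$.

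The only delicate point is the very first step: one genuinely needs that the top integral homology group is free, since otherwise primary Smith normal form would force a torsion class into a third grading and the rigidity exploited above would break down. Beyond that the corollary is an immediate consequence of Theorem~\ref{thm:SNF}, its content being precisely the observation that over two consecutive gradings a framed flow category is no more than its based chain complex.
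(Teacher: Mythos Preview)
Your argument is correct and is exactly the fleshing-out of what the paper leaves implicit: it states the result as an ``easy corollary'' of Theorem~\ref{thm:SNF} without proof, and your proof supplies the details --- reduce both categories to primary Smith normal form, use the $\Z/k\Z$ hypothesis to force the integral homology into two degrees with the top group free so that all objects land in two consecutive gradings, and then observe that such a category is determined by its $0$--dimensional moduli spaces. Two small remarks: the chain complex $(C_*,d)$ here actually computes the \emph{homology} of $\mathcal{X}(\conf)$ (cells have dimension $|x|+C$), so your conclusion that $H_{m+1}(C_*)$ is free follows directly rather than via the cohomology detour; and to match the signs of the diagonal entries in the two primary Smith normal forms you implicitly need that multiplying a basis element by $-1$ is achievable by moves, which it is (introduce a cancelling pair with entry $+1$, perform $SL$--type handle slides to transfer the sign, then cancel the pair now carrying entry $-1$).
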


In fact we suggest that much more is true.

\begin{conjecture}\label{conj:main} If two framed flow categories determine the same stable homotopy type then they are move equivalent to one another.
\end{conjecture}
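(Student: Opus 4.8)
The plan is to first remove the part of the data that Theorem~\ref{thm:SNF} already controls, and then to match flow category moves with the elementary operations of (stable) simple homotopy theory. Given framed flow categories $(\conf_1,\iota_1,\phi_1)$ and $(\conf_2,\iota_2,\phi_2)$ with $\mathcal{X}(\conf_1)\simeq\mathcal{X}(\conf_2)$, apply Theorem~\ref{thm:SNF} to each so that both based chain complexes are in primary Smith normal form, and then a final handle cancellation removes any unit diagonal entries. The resulting complex over $\Z$ is a direct sum of blocks $\Z\xrightarrow{p^k}\Z$, one per cyclic primary summand of the homology, together with blocks $\Z\xrightarrow{0}$, one per free summand; it is therefore completely determined by $H_*(\conf_i;\Z)$ as a graded abelian group. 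Since $\mathcal{X}(\conf_1)\simeq\mathcal{X}(\conf_2)$ forces $H_*(\conf_1;\Z)\cong H_*(\conf_2;\Z)$, we may assume from now on that $\conf_1$ and $\conf_2$ have \emph{the same} underlying based chain complex $(C_*,d)$, with exactly one object in each grading per primary summand. What remains is to promote the homotopy equivalence of spectra to a finite sequence of moves connecting the two categories.

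The organising principle is that each permissible move has a counterpart in the simple homotopy theory of finite CW spectra. Handle cancellation is an elementary expansion or collapse of a cancelling pair of cells; a handle slide (Theorem~\ref{thm:handleslide}), being assembled from handle cancellations, realises a change of cellular basis; and the extended Whitney trick (Theorem~\ref{thm:whitney}), via the Pontryagin--Thom collapse underlying the Cohen--Jones--Segal realization \cite{MR1338869}, replaces a moduli space $\cM(x,y)$ by any manifold framed cobordant to it rel boundary, i.e.\ alters the corresponding attaching datum by an arbitrary element of the relevant stable stem without changing any cell count. A homotopy equivalence of finite CW spectra is automatically simple, hence a finite composite of elementary expansions, elementary collapses, and cellular homotopies of attaching maps; the plan is to lift each of these three elementary operations, one at a time, to a move (or a short sequence of moves) on framed flow categories.

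One would carry this out by induction up the gradings. Suppose $\conf_1$ and $\conf_2$ have been made move equivalent on all objects of grading $\le r$; consider grading $r+1$. Since the underlying complexes already coincide, the two categories now differ in grading $r+1$ only through the \emph{framed} structure of the moduli spaces $\cM(x,y)$ with $|x|=r+1$, and this difference is measured by a relative homotopy class valued in a stable stem --- the primary obstruction being a discrepancy in a Steenrod operation, with higher obstructions above it in the Postnikov tower of the realization. Using the extended Whitney trick to modify the relevant moduli spaces within their framed cobordism classes rel boundary, together with handle slides to adjust bases, one kills this discrepancy; iterating up the skeletal/Postnikov filtration and then over all gradings connects $\conf_1$ to $\conf_2$. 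The cases of small cohomological width are precisely what the sequel paper treats via the classifications of Chang and of Baues--Hennes \cite{MR1113684}, and Corollary~\ref{cor:moore} is the case where all obstructions sit in two adjacent degrees, where they are automatically absent once the complex is in primary Smith normal form.

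The main obstacle is that the moduli spaces of a flow category are not independent of one another: the composition maps and the $\langle n\rangle$-manifold boundary identities (conditions (2) and (3) of the definition of a flow category) weld the spaces $\cM(x,y)$ into a single rigid diagram, so one cannot alter a given $\cM(x,y)$ in isolation --- indeed every move in Sections~\ref{sec:handleslide} and \ref{sec:whitney} perturbs many moduli spaces simultaneously. What is really needed is therefore a \emph{coherent} framed-cobordism-rel-boundary statement controlling the entire diagram of moduli spaces at once: in effect a Pontryagin--Thom theorem, in the spirit of Laures \cite{MR1781277}, identifying move equivalence classes of framed flow categories with homotopy types of finite spectra. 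Establishing that such a coherent correspondence exists --- equivalently, that every formal deformation of the realization lifts to the three geometric moves, with no residual obstruction beyond the (here automatically trivial) Whitehead torsion --- is the crux of the conjecture, and the step I expect to be genuinely hard.
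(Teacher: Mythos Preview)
The statement you are addressing is labeled a \emph{Conjecture} in the paper and is explicitly left open: the authors do not prove it, but only offer Corollary~\ref{cor:moore} as evidence and promise further evidence (via the Chang and Baues--Hennes classifications) in a sequel. There is therefore no ``paper's own proof'' against which to compare your proposal.

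Your write-up is not a proof either, and to your credit you say so: the final paragraph correctly isolates the crux --- a coherent Pontryagin--Thom statement matching move-equivalence classes of framed flow categories with stable homotopy types of finite spectra --- and flags it as the genuinely hard, unresolved step. What precedes that paragraph is a strategic outline, not an argument. A few comments on the outline itself. The reduction via Theorem~\ref{thm:SNF} to a common primary Smith normal form is sound and is exactly how the paper obtains Corollary~\ref{cor:moore}. The heuristic dictionary (handle cancellation $\leftrightarrow$ elementary expansion/collapse, handle slide $\leftrightarrow$ basis change, extended Whitney trick $\leftrightarrow$ homotopy of an attaching map) is reasonable, and the observation that Whitehead torsion is automatically trivial for finite CW spectra is correct. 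But the inductive step is where the outline becomes a wish rather than an argument: ``one kills this discrepancy'' is precisely the coherence problem you later acknowledge. The extended Whitney trick of Section~\ref{sec:whitney} replaces a single $\cM(x,y)$ by something framed cobordant \emph{rel boundary}, and the rel-boundary constraint means you cannot realise an arbitrary element of the relevant stable stem once $|x|-|y|>1$; moreover the forced changes to neighbouring moduli spaces (Definition~\ref{def:extW}, cases (2)--(4)) mean you are never adjusting one attaching datum in isolation. Turning the heuristic dictionary into an actual bijection is the whole content of the conjecture.

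In short: the paper offers no proof, and your proposal is a plausible plan of attack that correctly names its own gap rather than a proof.
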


We provide more evidence for this conjecture in a forthcoming paper where we show that something similar to Corollary \ref{cor:moore} is true but with the reduced homology now possibly supported in degrees $n$, $n+1$, $n+2$ and $n+3$. To do this, we will show how to use move equivalence to reduce the stable homotopy types of such flow categories to those associated to the Chang \cite{MR0036508} and Baues-Hennes \cite{MR1113684} homotopy classifications.

\bibliographystyle{plain}
\def\MR#1{}

\bibliography{morsebiblio}

\end{document}